\newtheorem{theo}{Theorem}[section]
\newtheorem{lemm}[theo]{Lemma}
\newtheorem{corr}[theo]{Corollary}
\newtheorem{prop}[theo]{Proposition}
\numberwithin{equation}{section}
\theoremstyle{definition}
\newtheorem{rema}[theo]{Remark}
\begin{document}
\title[]{Global well-posedness for the Keller--Segel--Navier--Stokes system with nonlinear boundary conditions}

\author[]{Taiki Takeuchi}
\address[Taiki Takeuchi]{Institute of Mathematics for Industry, Kyushu University, 744 Motooka, Nishi-ku, Fukuoka, 819-0395, Japan}		
\email{takeuchi.taiki.643@m.kyushu-u.ac.jp}

\author[]{Keiichi Watanabe}
\address[Keiichi Watanabe]{School of General and Management Studies, Suwa University of Science, 5000-1, Toyohira, Chino, Nagano, 391-0292, Japan}		
\email{watanabe\_keiichi@rs.sus.ac.jp}

\thanks{}

\subjclass[2020]{Primary; 35Q92, Secondary; 35K05, 35K61, 35Q35, 76D03, 92C17}

\keywords{Keller--Segel--Navier--Stokes system; Nonlinear boundary conditions; Maximal regularity; Global well-posedness}

\date{}

\maketitle

\begin{abstract}
In this paper, we consider the Keller--Segel--Navier--Stokes system with nonlinear boundary conditions in a bounded smooth (and not necessarily convex) domain $\Omega \subset \mathbb{R}^N$, $N \ge 2$, where the chemotactic sensitivity $S$ is assumed to have values in $\mathbb{R}^{N \times N}$ which accounts for rotational fluxes.
In contrast to the case where $S$ is a scalar-valued function (or $S$ is the identity matrix), in our system, the normal derivative for the density $n$ of the cell is given as the product of the unknown functions, i.e., the function $n$ satisfies the \textit{nonlinear} boundary condition.
We show the existence and uniqueness of global strong solutions to the system under the smallness assumptions of given data, where the Lipschitz continuity of the solution mapping and the asymptotic stability of the solution are also shown.
The proof is based on maximal regularity results for the linear heat equation and the Stokes system, where we establish a new maximal regularity theorem for some linear heat equation with an \textit{inhomogeneous} Neumann boundary condition.
Since we develop a direct approach to construct the solutions (i.e., \textit{without} considering limiting procedure in certain regularized problem with homogeneous linear boundary conditions), our solutions indeed satisfy the boundary conditions, which was not addressed clearly in the previous contributions by Cao and Lankeit \cite{CaoLankeit} as well as Yu, Wang, and Zheng \cite{YuWangZheng18}.
Under suitable regularity conditions on given data, the solution may be understood in a classical sense and, as a by-product, the non-negativity result is proved via the maximum principle of a new type.
\end{abstract}

\section{Introduction}
\label{sec-1}
Let us consider the following Keller--Segel--Navier--Stokes system with \textit{nonlinear} boundary conditions in a bounded smooth domain $\Omega \subset \mathbb{R}^N$, $N \ge 2$;
\begin{equation}\label{KSNS}
\left\{\begin{aligned}
\partial_tn &= \Delta n-\nabla \cdot (nS(t,x)\nabla c)-\bm{u} \cdot \nabla n, & t>0, \, x &\in \Omega, \\
\partial_tc &=\Delta c-c+n-\bm{u} \cdot \nabla c, & t>0, \, x &\in \Omega, \\
\partial_t\bm{u}+(\bm{u}\cdot\nabla)\bm{u} &= \Delta \bm{u}-\nabla p+n\nabla \varphi+\bm{f}, & t>0, \, x &\in \Omega, \\
\nabla \cdot \bm{u} &=0, & t>0, \, x &\in \Omega, \\
\nabla n \cdot \bm{\nu} &=nS(t,x)\nabla c \cdot \bm{\nu}, \quad \nabla c \cdot \bm{\nu}=\bm{u}=0, & t>0, \, x &\in \partial\Omega, \\
(n,c,\bm{u})(0,x) &= (n_0,c_0,\bm{u}_0)(x), & x &\in \Omega,
\end{aligned}\right.
\end{equation}
where $n=n(t,x)$, $c=c(t,x)$, $\bm{u}=\bm{u}(t,x)$, and $p=p(t,x)$ are the unknown functions standing for the density of the cell, the concentration of the chemo-attractant, the velocity of the fluid, and the pressure, respectively, whereas the potential function $\varphi = \varphi (x)$ and the vector-valued function $\bm{f}=\bm{f}(t,x)$ are assumed to be given and $(n_0,c_0,\bm{u}_0) (x)$ denotes the given initial data.
In addition, the \textit{tensor-valued} function $S = S(t,x)$ stands for the chemotactic sensitivity, which is also assumed to be given.
A unit outer normal to $\partial\Omega$ is denoted by $\bm{\nu}=\bm{\nu}(x)$.
From the biological viewpoint, it is reasonable to suppose that $n_0$ and $c_0$ are non-negative in $\Omega$ and expect that $n$ and $c$ are non-negative in $(0, \infty) \times \Omega$, where we will discuss this issue in the sequel.

System \eqref{KSNS} may be regarded as a generalization of the standard Keller--Segel--Navier--Stokes system in the case that the evolution of the chemo-attractant is (essentially) governed by production through cells, which is motivated by recent modeling results \cite{xueothmer,xue}.
In fact, those studies suggest that the motion of bacteria near surfaces has rotational components in the cross-diffusion flux.
To the best of the authors' knowledge, there are very few results on the global existence and uniqueness result for \eqref{KSNS} with a general $S$ and the asymptotic behavior of the global solutions, in particular, for the \textit{general dimensions} $N \ge 2$.
Note that, in the case that $S \equiv I$ (the identity matrix) or $S$ is replaced by a scalar function, there are abundant results for \eqref{KSNS} or the system with the second equation replaced by
\begin{equation}\label{consumption}
\partial_tc=\Delta c -nc-\bm{u}\cdot\nabla c,
\end{equation}
e.g., on the local/global existence, large-time behavior, and blow-up of solutions.
For the details, we refer to \cite{dilorz, duan, liulorz, winkler2012, chaekanglee2013, chaekanglee2014, JWZ15, Winkler16, KMS16, winkler2020, watanabe, kangleewinkler, Winkler19} and references therein (see also \cite{LL1,LL2,LL3} for fairly recent results on related models).
However, there are few studies in the case that $S$ is a general matrix.
Indeed, the introduction of general tensor-valued sensitivities $S$ induces difficulty due to the destruction of the natural energy structure arising from \eqref{KSNS}, and hence the proof of the existence of (weak) solutions to \eqref{KSNS} requires technical efforts.
For instance, the nonlinear boundary condition $\nabla n \cdot \bm{\nu} =nS(t,x)\nabla c \cdot \bm{\nu}$ on $(0, \infty) \times \Omega$ is harmful when we use integration by parts, i.e., when we establish the energy estimates, unlike the case where the homogeneous Neumann boundary condition for $n$ is imposed \cite{winkler2012,Winkler15,Winkler16}.
Note that this nonlinear boundary condition also prevents us from formulating \eqref{KSNS} as an abstract Cauchy problem so that we may \textit{not} rewrite System \eqref{KSNS} in the integral form.
To overcome this difficulty, an approximation argument was often used.
Precisely speaking, by introducing the family of cut-off functions that vanish near the boundary, one may regularize \eqref{KSNS} as the corresponding system with the homogeneous boundary conditions, i.e., $n_\eta$ (the regularization of $n$) satisfies $\nabla n_\eta \cdot \bm{\nu} = 0$ on $\partial \Omega$.
Such an approximation procedure was used by, e.g., Wang, Winkler, and Xiang \cite{WangWinklerXiang18}, Wang \cite{Wang17}, and Yu, Wang, and Zheng \cite{YuWangZheng18}: Wang, Winkler, and Xiang \cite{WangWinklerXiang18} considered the case of a 2D bounded smooth and convex domain $\Omega \subset \mathbb{R}^2$ and assumed that the tensor-valued sensitivity $S$ depends on $x$, $n$, and $c$ and satisfies $|S(x,n,c)| \le C(1+n)^{-\alpha}$ with some $C,\alpha>0$. Then they showed the global existence of classical solutions to \eqref{KSNS} for arbitrarily large initial data and obtained the uniform boundedness of the global solutions in space and time.
Wang \cite{Wang17} assumed a similar condition to that of \cite{WangWinklerXiang18} in the case $N=3$ and obtained global (very) weak solutions to \eqref{KSNS} without any smallness conditions of the initial data.
Yu, Wang, and Zheng \cite{YuWangZheng18} assumed $|S(x,n,c)| \le C$ and certain smallness conditions on the initial data to prove the global existence of classical solutions to \eqref{KSNS} in the case of $N \in \{2,3\}$, where the exponential decay of the global solutions was verified as well.
The assumption appearing in \cite{Wang17} was relaxed by Ke and Zheng \cite{KeZheng19}.
We also mention that a more general model, such as the quasilinear Keller--Segel--Navier--Stokes system modeling coral fertilization, was investigated by Zheng \cite{Z21} and Zheng and Ke \cite{ZK22}.
We remark that the assumptions of \cite{WangWinklerXiang18, Wang17, KeZheng19, Z21, ZK22} for $S$ differ from ours in the sense that while $S$ depends on $n$ and $c$, it is necessary to \textit{restrict} the growth of $S$ so that $|S(x,n,c)| \le C(1+n)^{-\alpha}$, in particular, to exclude the standard case $S \equiv I$. Although our result given below requires some smallness conditions, we do not have to assume any growth condition of $S$, and thus our result covers the case $S \equiv I$ as well. In that sense, our assumption is similar to that of \cite{YuWangZheng18}. However, we refrain from further discussion on the conditions of $S$ in this paper.
Furthermore, as mentioned above, System \eqref{KSNS} with the second equation replaced by \eqref{consumption} has been studied as well as our present problem \eqref{KSNS}; in this case, it should be noted that Cao and Lankeit \cite{CaoLankeit} had shown the corresponding result to that of \cite{YuWangZheng18} before they did.
We also refer to, e.g., Winkler \cite{Winkler18, Winkler21}, Black \cite{Black19}, Zheng \cite{Zheng22}, and Heihoff \cite{Heihoff23} for related results on System \eqref{KSNS} or with the second equation replaced by \eqref{consumption}.
It should be emphasized that the uniqueness of the global classical or weak solutions was not discussed in \textit{all} the aforementioned studies, even for the case $N = 2$.

The aim of this paper is to prove the global well-posedness of \eqref{KSNS} in the sense of Hadamard, i.e., we also verify that the solution depends continuously on the data.
In particular, we show that \eqref{KSNS} admits a \textit{unique} global strong solution provided that the initial data are small, and if given functions are suitably regular, we also investigate the smoothness and non-negativity issues for the global solution.
In contrast to the previous works \cite{Winkler21, Winkler18, CaoLankeit, KeZheng19, Wang17, WangWinklerXiang18, Heihoff23, Black19, YuWangZheng18, Zheng22, Z21, ZK22}, we rely on the maximal regularity technique so that we do \textit{not} need to approximate \eqref{KSNS} by the corresponding system with the homogeneous boundary conditions.

\subsection{Main results}

Before giving our main results, we shall introduce the abstract form of System \eqref{KSNS}.
To this end, we shall introduce several notations.
For $1 \le r \le \infty$ and $k \in \mathbb{N}_0 \coloneqq \mathbb{N} \cup \{0\}$, let $L^r(\Omega)$ and $W^{k,r}(\Omega)$ denote the usual Lebesgue spaces and Sobolev spaces defined on $\Omega$, respectively.
We agree that $W^{0,r}(\Omega) \coloneqq L^r(\Omega)$ in the case of $k=0$.
Moreover, we also set
\begin{align}
L_0^r(\Omega) &\coloneqq \left\{ \psi \in L^r(\Omega) \, \left| \, \int_{\Omega}\psi(x)\,dx=0 \right.\right\}, \\
W_{\nu}^{k,r}(\Omega) &\coloneqq \left\{\left. \psi \in W^{k,r}(\Omega) \, \right| \, \nabla \psi \cdot \bm{\nu}=0 \text{ on $\partial\Omega$} \right\} \quad \text{for $k \ge 2$}
\end{align}
and introduce the Neumann--Laplacian $-\Delta_{\mathrm{N},1} \coloneqq -\Delta$ in $W^{1,r} (\Omega)$ with the domain $D(-\Delta_{\mathrm{N},1}) \coloneqq W_{\nu}^{3,r}(\Omega)$, where the boundary condition $\nabla \psi \cdot \bm{\nu}=0$ on $\partial\Omega$ is regarded as $(\gamma\nabla \psi)(x) \cdot \bm{\nu}(x)=0$ a.e.~$x \in \partial\Omega$ with a suitable trace operator $\gamma$ (see Remark \ref{globalrema} (ii) below).
Let $C_0^{\infty}(\Omega)$ be the set of smooth functions which have a compact support in $\Omega$ and let $C_{0,\sigma}^{\infty}(\Omega)$ be the set of $\bm{\psi} \in C_0^{\infty}(\Omega)^N$ such that $\nabla \cdot \bm{\psi}=0$ on $\Omega$.
Then $W_0^{1,r}(\Omega)$ and $L_{\sigma}^r(\Omega)$ denote the $W^{1,r}(\Omega)$-closures of $C_0^{\infty}(\Omega)$ and the $L^r(\Omega)$-closures of $C_{0,\sigma}^{\infty}(\Omega)$ for $1<r<\infty$, respectively.
With the aid of the Helmholtz projection operator $P:L^r(\Omega)^N \to L_{\sigma}^r(\Omega)$ for $1<r<\infty$, we define the Stokes operator in $L^r_\sigma (\Omega)$ by $A \coloneqq -P\Delta : D(A) \to L_{\sigma}^r(\Omega)$ with the domain $D(A) \coloneqq L_{\sigma}^r(\Omega) \cap W_0^{1,r}(\Omega)^N \cap W^{2,r}(\Omega)^N$.
Then, we may rewrite System \eqref{KSNS} to the following form:
\begin{equation}\label{ABS}
\left\{\begin{aligned}
\partial_tn-\Delta n &= -\nabla \cdot (nS(t,x)\nabla c)-\bm{u} \cdot \nabla n, & t>0, \, x &\in \Omega, \\
\partial_tc+(1-\Delta_{\mathrm{N},1})c &=n-\bm{u} \cdot \nabla c, & t>0, \, x &\in \Omega, \\
\partial_t\bm{u}+A\bm{u} &= -P(\bm{u}\cdot\nabla)\bm{u}+P(n\nabla \varphi)+P\bm{f}, & t>0, \, x &\in \Omega, \\
\nabla n \cdot \bm{\nu} &=nS(t,x)\nabla c \cdot \bm{\nu}, & t>0, \, x &\in \partial\Omega, \\
(n,c,\bm{u})(0,x) &= (n_0,c_0,\bm{u}_0)(x), & x &\in \Omega.
\end{aligned}\right.
\end{equation}
Here, the initial data belong to subspaces of the Besov spaces $B_{r,q}^s(\Omega)$ whose definition will be given in the next section.
In this paper, we intend to seek unique global strong solutions to System \eqref{ABS} via maximal regularity theory and prove some properties of the solution provided that the given data are small in their natural norms.

Our first result is on the unique existence of global strong solutions to \eqref{ABS} for small given data.
To simplify the notation, for $1 \le q \le \infty$ and a Banach space $X$, we abbreviate $\|\,\cdot\,\|_{L^q(X)} \coloneqq \|\,\cdot\,\|_{L^q(0,\infty;X)}$ and $\|\,\cdot\,\|_{W^{1,q}(X)} \coloneqq \|\,\cdot\,\|_{W^{1,q}(0,\infty;X)}$.
We also set the mean value $\overline{n}_0$ of the mass of the initial density $n_0$ as $\overline{n}_0 \coloneqq |\Omega|^{-1}\int_{\Omega}n_0(x)\,dx$.

\begin{theo}[Global well-posedness]\label{globalsol}
Let $\Omega$ be a bounded smooth domain in $\mathbb{R}^N$, $N \ge 2$.
In addition, let $N<r<\infty$ and $2<q<\infty$ satisfy $1/r+2/q \neq 1$ and let $0<\lambda_1<\min\{1,\lambda_{\mathrm{N}}(\Omega)/q\}$ and $\lambda_2 \in [0,\lambda_1] \cap [0,\lambda_{\mathrm{D}}(\Omega)/q)$, where $\lambda_{\mathrm{N}}(\Omega),\lambda_{\mathrm{D}}(\Omega) \in (0,\infty)$ are given by
\begin{equation}\label{lamdef}
\lambda_{\mathrm{N}}(\Omega) \coloneqq \inf_{\psi \in (W^{1,2} \cap L_0^2)(\Omega) \setminus \{0\}}\frac{\|\nabla \psi\|_{L^2(\Omega)}^2}{\|\psi\|_{L^2(\Omega)}^2}, \quad \lambda_{\mathrm{D}}(\Omega) \coloneqq \inf_{\psi \in W_0^{1,2}(\Omega) \setminus \{0\}}\frac{\|\nabla \psi\|_{L^2(\Omega)}^2}{\|\psi\|_{L^2(\Omega)}^2}.
\end{equation}
Assume that the initial data $n_0 \in B_{r,q}^{2-2/q}(\Omega)$, $c_0 \in B_{r,q}^{3-2/q}(\Omega)$, and $\bm{u}_0 \in L_{\sigma}^r(\Omega) \cap B_{r,q}^{2-2/q}(\Omega)^N$ satisfy $\nabla c_0 \cdot \bm{\nu}=\bm{u}_0=0$ on $\partial\Omega$ and the given functions $\varphi$, $\bm{f}$, and $S$ satisfy
\begin{alignat}{2}
\nabla\varphi &\in L^r(\Omega)^N, & e^{\lambda_2 t}\bm{f} &\in L^q(0,\infty;L^r(\Omega)^N), \\
\label{scond}
S &\in L^{\infty}(0,\infty;W^{1,r}(\Omega)^{N^2}), \quad& \partial_tS &\in L^q(0,\infty;L^r(\Omega)^{N^2}).
\end{alignat}
Furthermore, in the case $1/r+2/q<1$, suppose that $n_0$, $c_0$, and $S$ fulfill the additional condition
\begin{equation}\label{addcompcond}
\nabla n_0\cdot\bm{\nu}=n_0S(0,\,\cdot\,)\nabla c_0 \cdot \bm{\nu} \quad \text{on $\partial\Omega$}.
\end{equation}
Then there exists a constant $\varepsilon>0$ independent of $n_0$, $c_0$, $\bm{u}_0$, $\varphi$, $\bm{f}$, and $S$ such that if
\begin{equation}
\begin{aligned}
\llbracket (n_0,c_0,\bm{u}_0,\bm{f}) \rrbracket &\coloneqq \|n_0\|_{B_{r,q}^{2-2/q}(\Omega)}+\|c_0\|_{B_{r,q}^{3-2/q}(\Omega)}+\|\bm{u}_0\|_{B_{r,q}^{2-2/q}(\Omega)}+\|e^{\lambda_2 t}\bm{f}\|_{L^q(L^r(\Omega))} \\
&\le \varepsilon (1+\|S\|_{L^{\infty}(W^{1,r}(\Omega))}+\|\partial_tS\|_{L^q(L^r(\Omega))})^{-1}(1+\|\nabla\varphi\|_{L^r(\Omega)})^{-2},
\end{aligned}
\end{equation}
then System \eqref{ABS} has a unique global strong solution $(n,c,\bm{u})$ satisfying
\begin{equation}\label{solclass}
\left\{\begin{aligned}
e^{\lambda_1 t} (n-\overline{n}_0) &\in L^q(0,\infty; (W^{2,r} \cap L_0^r)(\Omega)) \cap W^{1,q}(0,\infty;L_0^r(\Omega)), \\
e^{\lambda_1 t}\{c-(1-e^{-t})\overline{n}_0\} &\in L^q(0,\infty;W_{\nu}^{3,r}(\Omega)) \cap W^{1,q}(0,\infty;W^{1,r}(\Omega)), \\
e^{\lambda_2 t}\bm{u} &\in L^q(0,\infty;D(A)) \cap W^{1,q}(0,\infty;L_{\sigma}^r(\Omega)).
\end{aligned}\right.
\end{equation}
The nonlinear boundary condition is satisfied in the sense that
\begin{equation}\label{VNBC}
(\gamma(\nabla n(t,\,\cdot\,)-n(t,\,\cdot\,)S(t,\,\cdot\,)\nabla c(t,\,\cdot\,))(x))\cdot \bm{\nu}(x)=0
\end{equation}
for a.e.~$(t,x) \in (0,\infty) \times \partial\Omega$, where $\gamma:W^{1,r}(\Omega)^N \to W^{1-1/r,r}(\partial\Omega)^N$ denotes the standard trace operator.
Moreover, it holds that
\begin{equation}\label{solest}
\|(n-\overline{n}_0,c-(1-e^{-t})\overline{n}_0,\bm{u})\|_{\mathbb{E}} \le C(1+\|\nabla\varphi\|_{L^r(\Omega)})\llbracket (n_0,c_0,\bm{u}_0,\bm{f}) \rrbracket,
\end{equation}
where
\begin{equation}
\begin{aligned}
\|(\widehat{n},\widehat{c},\widehat{\bm{u}})\|_{\mathbb{E}} &\coloneqq \|e^{\lambda_1 t}\widehat{n}\|_{L^q(W^{2,r}(\Omega)) \cap W^{1,q}(L^r(\Omega))}+\|e^{\lambda_1 t}\widehat{c}\|_{L^q(W^{3,r}(\Omega)) \cap W^{1,q}(W^{1,r}(\Omega))} \\
&\quad +\|e^{\lambda_2 t}\widehat{\bm{u}}\|_{L^q(W^{2,r}(\Omega)) \cap W^{1,q}(L^r(\Omega))}
\end{aligned}
\end{equation}
and $C>0$ is a constant independent of $\varepsilon$, $n_0$, $c_0$, $\bm{u}_0$, $\varphi$, $\bm{f}$, $S$, $n$, $c$, and $\bm{u}$.

Suppose additionally that $(n^*,c^*,\bm{u}^*)$ is a global solution to \eqref{ABS} with the given data $(n_0,c_0,\bm{u}_0,\bm{f})$ replaced by $(n_0^*,c_0^*,\bm{u}_0^*,\bm{f}^*)$ such that
\begin{equation}
\llbracket (n_0^*,c_0^*,\bm{u}_0^*,\bm{f}^*) \rrbracket \le \varepsilon (1+\|S\|_{L^{\infty}(W^{1,r}(\Omega))}+\|\partial_tS\|_{L^q(L^r(\Omega))})^{-1}(1+\|\nabla\varphi\|_{L^r(\Omega)})^{-2}.
\end{equation}
Then the Lipschitz continuity of the solution mapping is obtained in the following sense:
\begin{equation}\label{sollip}
\begin{aligned}
&\|((n-\overline{n}_0)-(n^*-\overline{n}_0^*),c-(1-e^{-t})\overline{n}_0-\{c^*-(1-e^{-t})\overline{n}_0^*\},\bm{u}-\bm{u}^*)\|_{\mathbb{E}} \\
&\le C(1+\|\nabla\varphi\|_{L^r(\Omega)})\llbracket (n_0-n_0^*,c_0-c_0^*,\bm{u}_0-\bm{u}_0^*,\bm{f}-\bm{f}^*) \rrbracket.
\end{aligned}		
\end{equation}
\end{theo}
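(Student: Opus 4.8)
The plan is to set up the whole problem as a fixed-point argument in the solution space $\mathbb{E}$ defined in the statement, using the maximal regularity machinery as the linearization device. First I would fix the mean value $\overline{n}_0$ and decompose the unknowns to homogenize the equations as much as possible: write $n = \overline{n}_0 + \widehat{n}$ and $c = (1-e^{-t})\overline{n}_0 + \widehat{c}$, so that $\widehat{n}$ has zero mean for all time (using that $\int_\Omega \partial_t n\,dx$ is controlled by the boundary flux, which must vanish in an integrated sense because of the nonlinear boundary condition combined with the divergence structure --- this is exactly why the $L_0^r$ space appears) and $\widehat{c}$ solves a heat-type equation with a right-hand side that is small. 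The three equations then decouple at the linear level into (a) a heat equation for $\widehat{n}$ with an \emph{inhomogeneous} Neumann boundary condition $\nabla\widehat{n}\cdot\bm\nu = (\overline{n}_0+\widehat{n})S\nabla c\cdot\bm\nu$, for which we invoke the new maximal regularity theorem announced in the abstract; (b) a heat equation with homogeneous Neumann data for $\widehat{c}$, where the spectral gap $\lambda_{\mathrm N}(\Omega)$ guarantees exponential decay with rate up to $\lambda_{\mathrm N}(\Omega)/q$, hence the constraint $\lambda_1 < \lambda_{\mathrm N}(\Omega)/q$; and (c) the Stokes system for $\bm u$ with homogeneous Dirichlet data, where the Poincaré constant $\lambda_{\mathrm D}(\Omega)$ plays the analogous role and forces $\lambda_2 < \lambda_{\mathrm D}(\Omega)/q$.

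Next I would define the nonlinear map $\Phi$ sending a triple $(\widehat n,\widehat c,\bm u)$ in a small ball of $\mathbb E$ to the solution of the three linear problems with right-hand sides built from that triple: the quadratic terms $\nabla\cdot(nS\nabla c)$, $\bm u\cdot\nabla n$, $\bm u\cdot\nabla c$, $(\bm u\cdot\nabla)\bm u$, the linear term $n\nabla\varphi$, plus the boundary data $(\overline{n}_0+\widehat n)S\nabla c\cdot\bm\nu$ and the forcing $\bm f$. The core estimates are the product estimates: because $r>N$, the embedding $W^{1,r}(\Omega)\hookrightarrow L^\infty(\Omega)$ and $W^{2,r}\hookrightarrow W^{1,\infty}$ hold, so terms like $nS\nabla c$ live in $L^q(W^{1,r})$ with norm bounded by $C\|S\|_{L^\infty(W^{1,r})}\,(\|n\|_{L^\infty(W^{1,r})} + |\overline{n}_0|)\,\|c\|_{L^q(W^{2,r})}$, and the exponential weights combine as $e^{\lambda_1 t} \le e^{\lambda_1 t}\cdot e^{\lambda_1 t}\cdot e^{-\lambda_1 t}$-type bookkeeping, which is why $\lambda_1$ must be taken not too large (the $\min\{1,\lambda_{\mathrm N}(\Omega)/q\}$ bound) and why $\lambda_2\le\lambda_1$: the advection term $\bm u\cdot\nabla n$ carries weight $e^{(\lambda_2+\lambda_1)t}$ which must be dominated by $e^{\lambda_1 t}$ on the left after using boundedness of $e^{-\lambda_2 t}$. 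Tracking the trace: the inhomogeneous Neumann data must be estimated in the correct anisotropic trace space $F_{q,r}$ (a Triebel--Lizorkin-type space in time, $W^{1-1/r,r}$ in space), and a compatibility condition at $t=0$ is needed precisely when $1/r+2/q<1$ --- this is the origin of \eqref{addcompcond}. The smallness of $\llbracket(n_0,c_0,\bm u_0,\bm f)\rrbracket$ together with the $(1+\|\nabla\varphi\|)^{-2}$ and $(1+\|S\|_{\cdots})^{-1}$ factors makes $\Phi$ a contraction on a small ball, giving existence, uniqueness, and the estimate \eqref{solest}. The boundary condition \eqref{VNBC} then holds automatically because the fixed point solves the inhomogeneous Neumann problem with exactly that data --- no approximation is needed.

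For the Lipschitz dependence \eqref{sollip}, I would take two solutions with data $(n_0,c_0,\bm u_0,\bm f)$ and $(n_0^*,c_0^*,\bm u_0^*,\bm f^*)$, subtract the two systems, and observe that the difference solves the same family of linear problems with right-hand sides that are \emph{bilinear} in (solution, difference-of-solutions) plus linear in the difference of data. Applying maximal regularity once more and using that both solutions lie in a small ball (so the bilinear terms are absorbed with a small constant into the left-hand side) yields \eqref{sollip} with the stated constant. The main obstacle I anticipate is the inhomogeneous Neumann trace estimate: one must verify that the nonlinear boundary datum $g := n S\nabla c\cdot\bm\nu$ lies in the precise maximal-regularity trace class with the right exponential weight and the right decay, control its norm by $\|S\|_{L^\infty(W^{1,r})}\|n\|_{\mathbb E}\|c\|_{\mathbb E}$ (plus the $\overline{n}_0$ contribution), and handle the $t=0$ compatibility carefully in the regime $1/r+2/q<1$; the interplay between the time-trace space and the weight $e^{\lambda_1 t}$ is the delicate point, and it is exactly here that the new maximal regularity theorem for the heat equation with inhomogeneous Neumann data does the heavy lifting.
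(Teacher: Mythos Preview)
Your plan matches the paper's proof in Section~\ref{sec-4} essentially step for step: the same decomposition $\widetilde n=n-\overline n_0$, $\widetilde c=c-(1-e^{-t})\overline n_0$, the same fixed-point map $\Phi=(\Phi_1,\Phi_2,\bm\Phi_3)$ built from the three linear maximal-regularity solvers, the same product estimates exploiting $r>N$ (the paper's Lemma~\ref{nonlest}), and the same subtraction argument for Lipschitz dependence.

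Two small misattributions are worth fixing before you write it up. First, the constraint $\lambda_1<\lambda_{\mathrm N}(\Omega)/q$ comes from the $\widehat n$ equation, not the $\widehat c$ equation: it is the mean-zero Neumann Laplacian (Theorem~\ref{maxneu0}) that needs the spectral gap, whereas the $\widehat c$ equation already carries the shift $+1$ from the term $-c$, so after multiplying by $e^{\lambda_1 t}$ it becomes $\partial_t+(1-\lambda_1)-\Delta_{\mathrm N,1}$ and one only needs $1-\lambda_1>0$; this is the origin of the $\min\{1,\lambda_{\mathrm N}(\Omega)/q\}$. Second, the reason $\lambda_2\le\lambda_1$ is required is not the advection term $\bm u\cdot\nabla\widehat n$ (there one simply drops the weight on $\bm u$, using $\|\bm u\|_{L^\infty(L^r)}\le\|e^{\lambda_2 t}\bm u\|_{\mathbb E^0_{q,r}}$), but the coupling term $\widehat n\nabla\varphi$ in the Stokes equation, where one must control $e^{\lambda_2 t}\widehat n\nabla\varphi$ by $\|\nabla\varphi\|_{L^r}\|e^{\lambda_1 t}\widehat n\|_{\mathbb E^0_{q,r}}$.
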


\begin{rema}\label{globalrema}
We make some comments on Theorem \ref{globalsol}.
\begin{enumerate}
\item The conditions $r>N$ and $q>2$ yield the embedding results $B_{r,q}^{2-2/q}(\Omega) \subset C(\overline{\Omega})$ and $B_{r,q}^{3-2/q}(\Omega) \subset C^1(\overline{\Omega})$ (see \cite{muramatu}*{Theorem 2}), and hence the initial data $(n_0,c_0,\bm{u}_0)$ necessarily satisfy $n_0 \in C(\overline{\Omega})$, $c_0 \in C^1(\overline{\Omega})$, and $\bm{u}_0 \in C(\overline{\Omega})^N$.
Since the solution $(n,c,\bm{u})$ belongs to the maximal regularity class \eqref{solclass}, we infer from the above embeddings and the usual property of the trace space \cite{lunardi}*{Corollary 1.14} that
\begin{equation}\label{solbound}
\left\{\begin{aligned}
e^{\lambda_1 t}(n-\overline{n}_0) &\in BUC([0,\infty); C(\overline{\Omega})), \\
e^{\lambda_1 t}\{c-(1-e^{-t})\overline{n}_0\} &\in BUC([0,\infty); C^1(\overline{\Omega})), \\
e^{\lambda_2 t}\bm{u} &\in BUC([0,\infty);C(\overline{\Omega})^N).
\end{aligned}\right.		
\end{equation}
Thus, we have the following \textit{exponential decay} for $n$ and $c$:
$$\|n(t,\,\cdot\,)-\overline{n}_0\|_{L^{\infty}(\Omega)}+\|c(t,\,\cdot\,)-(1-e^{-t})\overline{n}_0\|_{L^{\infty}(\Omega)}=O(e^{-\lambda_1 t}) \quad \text{as $t \to \infty$}.$$
Concerning the asymptotic behaviors of $\bm{u}$, we also obtain $\|\bm{u}(t,\,\cdot\,)\|_{L^{\infty}(\Omega)}=O(e^{-\lambda_2 t})$ as $t \to \infty$ under a suitable exponential time-weighted assumption for $\bm{f}$.
We remark that our result includes the case of $\lambda_2=0$; although we may not expect any decay properties of $\bm{u}$, global solutions would be constructed even under weaker conditions $\bm{f} \in L^q(0,\infty;L^r(\Omega)^N)$.

\item The assumption $1/r+2/q \neq 1$ is just due to the exclusion of the critical case; in the case of $1/r+2/q<1$, since $\nabla n_0 \in B_{r,q}^{1-2/q}(\Omega)^N$ and since $B_{r,q}^{1-2/q}(\Omega) \subset W^{1/r+\eta,r}(\Omega)$ for $0<\eta<1-2/q-1/r$ \cite{muramatu}*{Theorem 2}, we may apply the usual trace theorem \cite{grisvard}*{Theorem~1.5.1.2} to find the trace operator $\gamma:W^{1/r+\eta,r}(\Omega) \to L^r(\partial\Omega)$ so that $\gamma \nabla n_0$ makes sense.
Thus we have to assume the compatibility condition \eqref{addcompcond}.
In the case of $1/r+2/q>1$, such a trace operator $\gamma$ does not exist.

\item The tensor-valued function $S$ is motivated by the model including bacterial chemotaxis near surfaces that contain rotational components orthogonal to the signal gradient \cite{xueothmer}, where the typical choice of $S$ is 
\begin{equation}
S = a \begin{pmatrix} 1 & 0 \\ 0 & 1 \end{pmatrix} + b \begin{pmatrix} 0 & - 1 \\ 1 & 0 \end{pmatrix}, \qquad a > 0, \; b \in \mathbb{R}
\end{equation}
in the case of $N = 2$.
Clearly, if $S \equiv I$ (the identity matrix), then System \eqref{KSNS} reduces to the standard Keller--Segel--Navier--Stokes system with chemotactic cross-diffusion being directed to increasing signal concentrations.
Moreover, note that we may obtain unique global strong solutions \textit{without} assuming any smallness of $\varphi$ and $S$ by taking $n_0$, $c_0$, $\bm{u}_0$, and $\bm{f}$ sufficiently small depending on $\varphi$ and $S$. Such a relaxation of the smallness condition of $\varphi$ relies on the method of \cite{choe, takeuchi}.
\end{enumerate}
\end{rema}

Under the same assumption as in Theorem \ref{globalsol}, we may prove that the unique global strong solution $(n,c,\bm{u})$ to \eqref{ABS} constructed in Theorem \ref{globalsol} is indeed a \textit{classical solution} in the pointwise sense provided that the given functions $\varphi$, $\bm{f}$, and $S$ satisfy some additional regularity conditions.
As a by-product of such a result, we also obtain the \textit{non-negativity} of solutions $n$ and $c$, a fact that must be observed from the viewpoint of the biological model since $n$ and $c$ stand for the density and the concentration, respectively.

\begin{theo}[Regularities and non-negativity of solutions]\label{solreg}
Assume that all assumptions in Theorem \ref{globalsol} are satisfied.
Suppose additionally that
\begin{gather}\label{givenreg1}
\varphi \in C^{1+\theta_0}(\overline{\Omega}), \quad \bm{f} \in C((0,\infty);C^{\theta_0}(\overline{\Omega})^N), \\
\label{givenreg2}
S \in C((0,\infty);C^{1+\theta_0}(\overline{\Omega})^{N^2})
\end{gather}
for some $0<\theta_0<\min\{1,2-2/q-N/r\}$.
Then, the solution $(n,c,\bm{u})$ obtained in Theorem \ref{globalsol} also admits the regularities
\begin{equation}\label{ncureg}
\left\{\begin{aligned}
n &\in BUC([0,\infty);C(\overline{\Omega})) \cap C((0,\infty);C^{2+\theta}(K)) \cap C^1((0,\infty);C^{\theta}(K)), \\
c &\in BUC([0,\infty);C^1(\overline{\Omega})) \cap C((0,\infty);C^{4+\theta}(K)) \cap C^1((0,\infty);C^{2+\theta}(K)), \\
\bm{u} &\in BUC([0,\infty);C(\overline{\Omega})^N) \cap C((0,\infty);C^{2+\theta}(\overline{\Omega})^N) \cap C^1((0,\infty);C^{\theta}(\overline{\Omega})^N)
\end{aligned}\right.
\end{equation}
for all $0<\theta<\theta_0$, where $K \subset \Omega$ is an arbitrary compact subset.
Moreover, there holds
$$\int_{\Omega}n(t,x)\,dx=\int_{\Omega}n_0(x)\,dx, \quad \int_{\Omega}c(t,x)\,dx=e^{-t}\int_{\Omega}c_0(x)\,dx+(1-e^{-t})\int_{\Omega}n_0(x)\,dx$$
for all $0<t<\infty$.
In particular, if $N/r+2/q<1$ and the initial data $n_0$ and $c_0$ are non-negative in $\Omega$, then $n$ and $c$ are non-negative in $(0,\infty) \times \Omega$.
\end{theo}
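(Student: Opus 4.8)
The statement bundles three independent claims — the higher regularity \eqref{ncureg}, the two mass identities, and the non-negativity — and I would establish them in that order, the last being the genuinely new ingredient.

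\emph{Higher regularity.} The starting point is that Theorem \ref{globalsol} already places $(n,c,\bm u)$ in the maximal regularity class \eqref{solclass}; by trace theory one then has, on every finite strip $[\delta,T]\times\overline\Omega$ with $\delta>0$, that $n,\bm u\in C([\delta,T];B_{r,q}^{2-2/q}(\Omega))$ and $c\in C([\delta,T];B_{r,q}^{3-2/q}(\Omega))$, which by $r>N$, $q>2$ and the embeddings of Remark \ref{globalrema} yields spatial Hölder continuity of $n$, $\bm u$ and of $\nabla c$, with parabolic time-Hölder continuity after interpolation with the $W^{1,q}(L^r)$-component. I would then freeze these as coefficients and data and bootstrap by parabolic Schauder estimates. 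For $\bm u$ one reads the third line of \eqref{ABS} as the non-stationary Stokes system with no-slip data and right-hand side $-P(\bm u\cdot\nabla)\bm u+P(n\nabla\varphi)+P\bm f$; using \eqref{givenreg1} together with $n\in BUC([0,\infty);B_{r,q}^{2-2/q}(\Omega))\hookrightarrow BUC([0,\infty);C^{\theta_0}(\overline\Omega))$ (valid since $\theta_0<2-2/q-N/r$), Solonnikov-type Schauder estimates for the Stokes system in the smooth domain give $\bm u\in C((0,\infty);C^{2+\theta}(\overline\Omega)^N)\cap C^1((0,\infty);C^\theta(\overline\Omega)^N)$ up to the boundary. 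For $n$ one must first rewrite the cross-diffusion term in non-divergence form, $-\nabla\cdot(nS\nabla c)=-(S\nabla c)\cdot\nabla n-(\nabla\cdot(S\nabla c))\,n$, so that $n$ solves a scalar second-order parabolic equation with Hölder drift and zeroth-order coefficients built from $S$ (via \eqref{givenreg2}) and derivatives of $c$; \emph{interior} Schauder estimates then give $n\in C((0,\infty);C^{2+\theta}(K))\cap C^1((0,\infty);C^\theta(K))$. Feeding $n,\bm u\in C^{2+\theta}$ into the second line of \eqref{ABS} and iterating the interior estimate upgrades $c$ to $C((0,\infty);C^{4+\theta}(K))\cap C^1((0,\infty);C^{2+\theta}(K))$, after which re-running the estimate for $n$ produces no further gain because $\nabla\varphi,\bm f\in C^{\theta_0}$ and $S\in C^{1+\theta_0}$ cap the attainable order; the strict inequality $\theta<\theta_0$ absorbs the loss coming from the mere time-continuity of $\bm f$ and $S$. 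The $BUC$-at-$t=0$ parts are exactly \eqref{solbound}.

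\emph{Mass identities.} Integrate the first line of \eqref{ABS} over $\Omega$. Since $\nabla\cdot\bm u=0$ and $\bm u|_{\partial\Omega}=0$, $\int_\Omega\bm u\cdot\nabla n=\int_{\partial\Omega}(\bm u\cdot\bm\nu)n-\int_\Omega(\nabla\cdot\bm u)n=0$, while $\int_\Omega[\Delta n-\nabla\cdot(nS\nabla c)]=\int_{\partial\Omega}(\nabla n-nS\nabla c)\cdot\bm\nu=0$ by the boundary identity \eqref{VNBC}; hence $\frac{d}{dt}\int_\Omega n\,dx=0$. For $c$ the homogeneous Neumann condition $\nabla c\cdot\bm\nu=0$ and $\bm u|_{\partial\Omega}=0$ kill the corresponding boundary terms and leave the scalar ODE $\frac{d}{dt}\int_\Omega c\,dx=-\int_\Omega c\,dx+\int_\Omega n\,dx=-\int_\Omega c\,dx+\int_\Omega n_0\,dx$, whose solution with datum $\int_\Omega c_0\,dx$ is the stated formula. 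All integrations by parts are licit already in the class \eqref{solclass}.

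\emph{Non-negativity.} Assume $N/r+2/q<1$; then $B_{r,q}^{2-2/q}(\Omega)\hookrightarrow C^1(\overline\Omega)$, so $\nabla n,\nabla c\in BUC([0,\infty);C(\overline\Omega))$ and \eqref{VNBC} holds pointwise on $(0,\infty)\times\partial\Omega$. Put $n_-\coloneqq\max\{-n,0\}\ge0$; from \eqref{solclass} one has $n_-\in L^q_{\mathrm{loc}}(W^{1,r})$ with $\nabla n_-=-\mathbf 1_{\{n<0\}}\nabla n$, and $t\mapsto\|n_-(t)\|_{L^2(\Omega)}^2$ is absolutely continuous with $\tfrac12\frac{d}{dt}\|n_-\|_{L^2}^2=-\int_\Omega n_-\partial_t n\,dx$. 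Substituting the $n$-equation, the boundary contributions arising from $-\int_\Omega n_-\Delta n$ and $\int_\Omega n_-\nabla\cdot(nS\nabla c)$ combine into $-\int_{\partial\Omega}n_-(\nabla n-nS\nabla c)\cdot\bm\nu$, which vanishes \emph{precisely because of the nonlinear boundary condition}; the transport term vanishes as in the previous step; what is left is
\[
\tfrac12\tfrac{d}{dt}\|n_-\|_{L^2}^2+\|\nabla n_-\|_{L^2}^2=\int_\Omega n_-\,\nabla n_-\cdot S\nabla c\,dx\le\tfrac12\|\nabla n_-\|_{L^2}^2+\tfrac12\|S\nabla c(t)\|_{L^\infty}^2\|n_-\|_{L^2}^2 .
\]
Since $S\in L^\infty(0,\infty;W^{1,r})$ with $r>N$ and $\nabla c\in L^q_{\mathrm{loc}}(W^{2,r})\hookrightarrow L^q_{\mathrm{loc}}(L^\infty)$ with $q>2$, the coefficient $t\mapsto\|S\nabla c(t)\|_{L^\infty}^2$ lies in $L^1_{\mathrm{loc}}$, and Grönwall's lemma together with $\|n_-(0)\|_{L^2}=\|(n_0)_-\|_{L^2}=0$ forces $n_-\equiv0$, i.e.\ $n\ge0$. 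For $c$, testing the second line of \eqref{ABS} with $c_-\coloneqq\max\{-c,0\}$, the Neumann condition kills the boundary term, the transport term vanishes, the absorption term contributes $-\|c_-\|_{L^2}^2\le0$, and $-\int_\Omega n\,c_-\,dx\le0$ because $n\ge0$; hence $\frac{d}{dt}\|c_-\|_{L^2}^2\le0$ and $\|c_-(0)\|_{L^2}=\|(c_0)_-\|_{L^2}=0$ give $c\ge0$.

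\emph{Main obstacle.} The conceptual heart is the non-negativity: although the nonlinear boundary condition destroys the usual comparison and variational structures, the energy identity for $n_-$ retains \emph{exactly} the boundary combination $(\nabla n-nS\nabla c)\cdot\bm\nu$ that \eqref{VNBC} annihilates — this cancellation is the "maximum principle of a new type". On the technical side, the most laborious point is the regularity bootstrap: arranging the non-divergence reformulation of the cross-diffusion term so that interior parabolic Schauder estimates lose no derivative, tracking which order is actually gained at each iteration, and observing that the coupling through $n$ is what confines the higher regularity of $n$ and $c$ to compact interior subsets, whereas the no-slip condition lets $\bm u$ reach $C^{2+\theta}(\overline\Omega)$.
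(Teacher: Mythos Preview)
Your proposal is correct in its essentials but differs from the paper's proof in two of the three parts.

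\emph{Regularity.} The paper does not invoke parabolic Schauder theory. For $\bm u$ it writes the Duhamel formula with the Stokes semigroup $e^{-tA}$ and bootstraps via smoothing estimates in Besov spaces (Proposition \ref{besovstokes}); for $n$ and $c$ it multiplies by a cut-off $\chi_\delta$, extends by zero to $\mathbb R^N$, and bootstraps the resulting integral equations \eqref{INT} with the whole-space heat semigroup (Lemmas \ref{FGest} and \ref{ureg}). Your Schauder route is a legitimate alternative, but two points deserve care. First, the right-hand side you feed into Solonnikov's estimates should be the unprojected force $-(\bm u\cdot\nabla)\bm u+n\nabla\varphi+\bm f$, not its Helmholtz projection: $P$ is not bounded on $C^\theta$, which is exactly why the paper works in $B^s_{r_*,\infty}$ where $P$ \emph{is} bounded. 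Second, your first bootstrap step for $n$ needs the zeroth-order coefficient $\nabla\cdot(S\nabla c)$ to be H\"older, but from \eqref{solclass} alone one only has $c\in BUC(B^{3-2/q}_{r,q})$, which does not embed into $C^2(\overline\Omega)$ when $1/r+2/q>1$; you would need an intermediate $L^p$-parabolic step (or begin with $c$ instead of $n$) before Schauder applies. The paper's semigroup scheme sidesteps both issues by staying in the Besov scale throughout. What your approach buys is that it is entirely classical and does not require the Besov machinery of Appendix \ref{ap-A}.

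\emph{Non-negativity.} Here your argument is not only correct but cleaner than the paper's. The paper first relaxes the exact boundary condition to the inequality $\nabla n\cdot\bm\nu\ge -\alpha|n|$ and then proves a general maximum principle (Lemma \ref{nonneg}) whose proof tests with $U_-$ and absorbs the resulting boundary term via a trace inequality (Proposition \ref{traceineq}). You observe instead that testing with $n_-$ produces on $\partial\Omega$ precisely the combination $n_-(\nabla n-nS\nabla c)\cdot\bm\nu$, which vanishes identically by \eqref{VNBC}; no trace inequality is needed, and Gr\"onwall closes directly. The paper's formulation has the advantage of yielding a reusable lemma for problems with merely one-sided boundary control; yours exploits the specific structure of \eqref{KSNS} more sharply.

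\emph{Mass identities.} Your argument coincides with the paper's.
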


\begin{rema}\label{regrema}
There are a few comments on Theorem \ref{solreg}.
\begin{enumerate}
\item The regularity result \eqref{ncureg} implies that the global solution $(n,c,\bm{u})$ constructed in Theorem \ref{globalsol} becomes a classical solution to \eqref{ABS} under the additional assumptions \eqref{givenreg1} and \eqref{givenreg2}.
Here, we will prove the regularity result \eqref{ncureg} by the standard bootstrap argument but it is \textit{not} so simple to show the smoothing effects acting on $n$ due to the nonlinearity of the boundary condition $\nabla n \cdot \bm{\nu}=nS(t,x)\nabla c \cdot \bm{\nu}$ on $\partial \Omega$.
To overcome this difficulty, we will introduce a suitable cut-off function to ignore the effects near the boundary $\partial\Omega$.
Then, extending the function on $\Omega$ to the function on $\mathbb{R}^N$, we may consider the integral form with the aid of the usual heat semigroup $e^{t\Delta}$ defined on $\mathbb{R}^N$ to prove that $n$ is regular in space and time.
This is the reason the compact subset $K \subset \Omega$ appears in \eqref{ncureg}.

\item In the last assertion of Theorem \ref{solreg}, the condition $N/r+2/q<1$ is mainly used to ensure that the nonlinear boundary condition $\nabla n \cdot \bm{\nu}=nS(t,x)\nabla c \cdot \bm{\nu}$ makes sense in $C(\partial\Omega)$.
To be more precise, as we have $B_{r,q}^{2-2/q}(\Omega) \subset C^1(\overline{\Omega})$ from \cite{muramatu}*{Theorem 2}, a similar argument to that in Remark \ref{globalrema} (i) provides that $\nabla n \in BUC([0,\infty);C^1(\overline{\Omega})^N)$.
Therefore, noting that $nS\nabla c \in C((0,\infty);C(\overline{\Omega})^N)$ because of \eqref{givenreg2} and \eqref{ncureg}, we observe that \eqref{VNBC} is satisfied in the $C(\partial\Omega)$-sense.
\end{enumerate}
\end{rema}

We now draw attention to what is the most pivotal contribution of our findings in Theorems \ref{globalsol} and \ref{solreg}. As noted earlier, while the global existence of solutions under smallness assumptions has been established in prior works such as \cite{CaoLankeit, YuWangZheng18}, our work not only reproduces such results but also ensures the \textit{uniqueness} of global solutions within an appropriate analytical framework. A further noteworthy aspect of our results lies in the fact that the global solutions that we construct are rigorously subject to the nonlinear boundary condition in the sense of \eqref{VNBC} (cf.~Remark~\ref{globalrema}~(ii)).
This achievement attains particular significance when juxtaposed with earlier contributions, such as those in \cite{CaoLankeit, YuWangZheng18}. For instance, the methodology employed in \cite{CaoLankeit} relies on solving a certain regularized problem involving homogeneous linear boundary conditions $\nabla n_{\eta}\cdot\bm{\nu}=0$ on $\partial\Omega$ instead of the nonlinear boundary condition. While \cite{CaoLankeit}*{Lemma 5.4} demonstrates the convergence of a sequence of approximate solutions to a weak solution (defined via test functions in $C_0^{\infty}(\Omega)$), the subsequent justification that such a weak solution is indeed classical primarily hinges on parabolic regularity arguments. However, the weak solution framework delineated in \cite{CaoLankeit}*{Definition 1} is devoid of explicit information regarding the nonlinear boundary condition, since it relies exclusively on test functions compactly supported in $\Omega$. Consequently, the results in \cite{CaoLankeit} leave unresolved the question of whether the nonlinear boundary condition $\nabla n\cdot\bm{\nu}=nS(t,x)\nabla c\cdot\bm{\nu}$ is certainly satisfied. In that sense, we believe that our
results seem to be the \textit{first ones} that provide the unique existence of global solutions that actually satisfy the nonlinear boundary conditions. As a trade-off for our framework necessitating more stringent assumptions compared to those in \cite{CaoLankeit, YuWangZheng18}, we emphasize that these conditions allow us to construct a unique regular global solution, thereby enabling a more robust characterization of the interplay between the nonlinear boundary conditions and the governing dynamics.

\subsection{Summary of strategies and plan of the paper}
\label{morerema}

This subsection is devoted to an overview of both the main difficulties in proving Theorems \ref{globalsol} and \ref{solreg} and our strategies for overcoming them, including a description of the structure of this paper.
Note that the notations and fundamental properties of embedding are summarized in Section \ref{sec-2}.

In contrast to the standard case $S \equiv I$, the main difficulty here is that System \eqref{ABS} contains the \textit{nonlinear} boundary condition $\nabla n \cdot \bm{\nu}=nS(t,x)\nabla c \cdot \bm{\nu}$ on $\partial \Omega$.
In fact, as seen in \eqref{ABS}, we may not directly formulate \eqref{KSNS} as an abstract Cauchy problem.
In this paper, we rely on some maximal regularity theorem to construct a strong solution to \eqref{ABS}, where, roughly speaking, maximal regularity means that \textit{each} term in the equation has the same regularity in space and time.
One of the novelties of this paper is that we establish the maximal regularity theorems for the Neumann--Laplacian in some subspace of $L^r (\Omega)$.
To be precise, it is well-known that the Neumann--Laplacian defined on $W^{2,r}_\nu (\Omega) = \{\psi \in W^{2,r}(\Omega) \mid \nabla \psi \cdot \bm{\nu} = 0 \text{ on $\partial \Omega$}\}$ contains 0 as its spectrum, and hence we may \textit{not} expect to observe that the Neumann--Laplacian has maximal regularity on the semi-infinite time interval $(0, \infty)$.
However, if we restrict the domain of the Neumann--Laplacian such that 
\begin{equation}
(W_{\nu}^{2,r} \cap L_0^r)(\Omega)=\left\{\psi \in W^{2,r}(\Omega) \, \left| \, \nabla \psi \cdot \bm{\nu}=0 \text{ on $\partial\Omega$}, \quad \int_{\Omega}\psi(x)\,dx=0 \right.\right\},
\end{equation}
then the Neumann--Laplacian generates an analytic $C_0$-semigroup of negative exponential type so that the Neumann--Laplacian has maximal regularity on $(0, \infty)$.
We also extend this result to the case of inhomogeneous boundary data (cf.~Theorem \ref{maxneu0} below), which is \textit{different} from the standard result on the maximal regularity theorem for the linear heat equation with an inhomogeneous Neumann boundary condition (cf.~Pr\"uss and Simonett \cite{prusssimonett}*{Theorem 6.3.2}).
In fact, unlike the case of \cite{prusssimonett}*{Theorem 6.3.2}, we see that the solution decays exponentially $O(e^{- \lambda t})$ as $t \to \infty$, where $\lambda$ is a positive number.
This assertion seems to be standard nowadays but we may not deduce the fact from a general theory of maximal regularity.
Hence, we will serve the detailed proof with the aid of a combination of the general theory of maximal regularity \cite{prusssimonett}*{Theorem 6.3.2} and the argument used in \cite{shibata}*{Section 4} since it is partly hard to find its proof in the common literature.
Moreover, we verify that the range of $\lambda$ may be given as $0 < \lambda < \min\{1, \lambda_{\mathrm{N}}(\Omega) \slash q\}$, where $\lambda_{\mathrm{N}}(\Omega)$ stands for the least upper bound of the Poincar\'e inequality \cite{galdi}*{Theorem II 5.4}:
$$\|\psi\|_{L^2(\Omega)} \le \sqrt{\lambda_{\mathrm{N}}(\Omega)}\|\nabla \psi\|_{L^2(\Omega)}$$
for all $\psi \in (W^{1,2} \cap L_0^2)(\Omega)$.
Here, $q \in (1, \infty)$ is an index of the time integrability of the solution.
We shall remark that the same method yields an exponential decay of solutions to the usual Stokes system as well, provided that given external forces satisfy exponential time-weighted assumptions.
The aforementioned results are recorded in Section \ref{sec-3} with their proofs.
In addition, the fundamental property of the Neumann--Laplacian is summarized in Proposition \ref{neumann0} below.
We also prepare the maximal regularity result for a shifted Neumann heat equation in the same section.

In Section \ref{sec-4}, we prove Theorem \ref{globalsol} with a combination of the maximal regularity results given in Section \ref{sec-3}.
Since we construct a strong solution to \eqref{ABS} directly, we need to suppose that the given data are small, which is different from the existence results on weak solutions to \eqref{ABS} established by Wang \cite{Wang17} and Ke and Zheng \cite{KeZheng19} (cf.~\cite{Winkler18, WangWinklerXiang18}).
However, in contrast to \cite{Wang17, KeZheng19, Winkler18, WangWinklerXiang18}, we may obtain the \textit{uniqueness} of the solution.
To close the a priori estimate, the space-time $L^{\infty}$-boundedness of the nonlinear terms are crucial; this leads us to suppose the conditions $r>N$ and $q>2$ since these restrictions imply the regularities \eqref{solbound} of solutions $(n,c,\bm{u})$.
Namely, those restrictions stem from the nonlinear problem rather than linear analyses.
Note that the second author \cite{watanabe} showed the stability of the constant stationary solutions $(\overline{n}_0,0,0)$ to \eqref{ABS} in the case that $S \equiv I$, $\bm{f} \equiv 0$, and the second equation of \eqref{ABS} is replaced by \eqref{consumption}.
The main ingredient of the analysis in \cite{watanabe} is the maximal regularity theorem as well, but System \eqref{ABS} contains the nonlinear boundary condition $\nabla n \cdot \bm{\nu}=nS(t,x)\nabla c \cdot \bm{\nu}$, which induces difficulty. Hence, our method essentially \textit{differs} from that of \cite{watanabe}.

Section \ref{sec-5} is devoted to the proof of Theorem \ref{solreg}.
Note that the regularities \eqref{ncureg} of $(n,c,\bm{u})$ are sufficient to verify that $(n,c,\bm{u})$ are classical solutions to \eqref{ABS}.
However, at least in our result, we do not know whether \eqref{ncureg} holds for a large $\theta>0$ even if $\varphi$, $\bm{f}$, and $S$ are smooth.
The regularities of $\bm{u}$ may be verified by the classical semigroup theory with the aid of the mapping properties of the Stokes semigroup $e^{- tA}$ in the Besov spaces, whose proof is given in Appendix \ref{ap-A}.
However, in contrast to proving the additional regularity for $\bm{u}$, some ingenuity is required to show the additional regularity of solutions $n$ and $c$ due to the nonlinearity of the boundary condition.
In fact, as it has been stated in Remark \ref{regrema} (i), the additional regularity of solutions $n$ and $c$ may be obtained by the standard bootstrap argument, but we have to pay attention to the nonlinear boundary conditions appearing in \eqref{ABS}.
Here, in general, we may expect that solutions obtained by the maximal regularity theorem (cf.~Theorem \ref{globalsol}) are real-analytic in space and time due to the parameter trick introduced by Angenent \cite{angenent}, see, e.g., \cite{prusssimonett}*{Section 9.4} and \cite{denk}*{Section 7.2} for its details.
However, at least in our situation, it seems to be \textit{not} so simple to apply the parameter trick. In fact, the parameter trick is essentially based on the implicit function theorem in Banach spaces together with a scaling argument with respect to the variables $x$ and $t$.
In our case, there are two main difficulties:
The first one is that the function $n$ should satisfy the condition $\int_\Omega n (t,x) \, dx = \int_\Omega n_0 (x) \, dx$ so that it seems to be difficult to introduce a suitable coordinate transform that possesses the mean value condition on $n$.
Here, a similar problem arises in the divergence-free condition on $\bm{u}$ as well.
The second difficulty stems from the maximal regularity theorem of the heat equation with a Neumann boundary condition (cf.~Theorem \ref{maxneu0} below).
In fact, it seems to be difficult to apply this theorem to verify an isomorphism assumption that is required in the implicit function theorem.
This is due to the condition that, in Theorem \ref{maxneu0} below, each term on the right-hand side of \eqref{nheat} has to be given as the divergence form $- \nabla \cdot \bm{F}_{\mathrm{B}}$ or satisfy the mean value condition $\int_\Omega F_{\mathrm{E}}(t,x) \, dx = 0$.
Hence, the analyticity of the solution is a difficult problem even with respect to the time variable $t$ only when we adapt Angenent's parameter trick \cite{angenent}.

To avoid the aforementioned difficulty, we rely on the smoothing effects of the heat semigroup $e^{t \Delta}$ on $\mathbb{R}^N$.
To reformulate the equations as integral forms, we will use the cut-off technique to ignore the boundary conditions.
In fact, we take $0<\delta<1$ sufficiently small and introduce the cut-off function $\chi_{\delta} \in C_0^{\infty}(\Omega)$ by setting
\begin{equation}\label{chidef}
\chi_{\delta}(x) \coloneqq \left\{\begin{array}{cl}
1 & \displaystyle \text{if } \inf_{y \in \partial\Omega}|x-y| \ge \delta, \\
0 & \displaystyle \text{if } \inf_{y \in \partial\Omega}|x-y| \le \delta/2.		
\end{array}\right.		
\end{equation}
Here, we note that for all $\psi \in C(\overline{\Omega})$, there holds $\chi_{\delta}(x)\psi(x)=0$ if $x \in \partial\Omega$, which means that we may ignore the aforementioned nonlinear boundary condition.
Then, extending the unknowns, which are multiplied by $\chi_{\delta}$, to $\mathbb{R}^N$, we may have the integral system; see \eqref{INT} below.
Together with the mapping properties of the heat semigroup $e^{t \Delta}$ on $\mathbb{R}^N$ and the bootstrap argument, we may obtain the additional regularities of global strong solutions in space and time.
After verifying that the solution $(\bm{u}, n, c)$ is a classical solution to \eqref{ABS}, we will prove the non-negativity of $n$ and $c$, which is required in light of biological models.
Note again that System \eqref{ABS} includes the nonlinear boundary condition $\nabla n\cdot\bm{\nu}=nS(t,x)\nabla c\cdot\bm{\nu}$ on $\partial \Omega$, we may \textit{not} expect to show $n, c \ge 0$ just relying on the standard maximum principle for the parabolic problems.
Regarding this problem, we introduce the maximum principle for the parabolic problems of a new type, which may be regarded as a variant assertion of Proposition 52.8 and Remark 52.9 in \cite{quittner}.
The proof is given in Appendix \ref{ap-B}.

\section{Preliminaries}
\label{sec-2}
\subsection{Function spaces}

In this subsection, we give the definitions of function spaces. We begin with classical function spaces:
Let $C(\Omega)$ be the set of all continuous functions on $\Omega$.
We also set $C^k(\Omega) \coloneqq \{\psi \in C(\Omega) \mid \partial_x^{\alpha}\psi \in C(\Omega), \, |\alpha| \le k\}$ for $k \in \mathbb{N}$.
Note that if $\Omega$ is replaced by $K$, where $K$ is $K=\overline{\Omega}$ or the compact subset $K \subset \Omega$, then $C(K)$ and $C^k(K)$ become the Banach spaces with the norms $\|\,\cdot\,\|_{L^{\infty}(K)}$ and $\|\,\cdot\,\|_{C^k(K)} \coloneqq \sum_{|\alpha| \le k}\|\partial_x^{\alpha}\,\cdot\,\|_{L^{\infty}(K)}$, respectively.
We next introduce the H\"older spaces:
Let $0<\theta<1$.
Then the H\"older spaces are given by $C^{\theta}(\overline{\Omega}) \coloneqq \{\psi \in C(\overline{\Omega}) \mid \|\psi\|_{C^{\theta}(\overline{\Omega})} <\infty\}$ with the norm
$$\|\psi\|_{C^{\theta}(\overline{\Omega})} \coloneqq \|\psi\|_{L^{\infty}(\Omega)}+\sup_{x,y \in \Omega, \, x \neq y}\frac{|\psi(x)-\psi(y)|}{|x-y|^{\theta}}.$$
We also set $C^{k+\theta}(\overline{\Omega}) \coloneqq \{\psi \in C^k(\overline{\Omega}) \mid \|\psi\|_{C^{k+\theta}(\overline{\Omega})} \coloneqq \|\psi\|_{C^k(\overline{\Omega})}+\sum_{|\alpha|=k}\|\partial_x^{\alpha}\psi\|_{C^{\theta}(\overline{\Omega})}<\infty\}$ for $k \in \mathbb{N}$.

Concerning the Besov spaces $B_{r,q}^s(\Omega)$, we define via the real interpolation method; let $r,q \in [1,\infty]$ and $0<s<\infty$.
Then the Besov spaces are defined by $B_{r,q}^s(\Omega) \coloneqq (L^r(\Omega),W^{k,r}(\Omega))_{s/k,q}$, where $k \in \mathbb{N}$ is any number satisfying $k>s$ and $(\,\cdot\,,\,\cdot\,)_{\theta,q}$ denotes the real interpolation functor.
For the definition of the real interpolation spaces, we refer to, e.g., \cite{triebel}*{Chapter 1} and \cite{lunardi}*{Chapter 1}.
Note that the basic property of the real interpolation spaces \cite{lunardi}*{Proposition 1.3} implies the continuous embeddings
$$W^{k,r}(\Omega) \subset B_{r,q}^s(\Omega) \subset L^r(\Omega)$$
for all $r,q \in [1,\infty]$, $0<s<\infty$, and $k \in \mathbb{N}$ such that $k>s$.
We also have the following continuous embeddings
$$B_{r,\infty}^{s_1}(\Omega) \subset B_{r,q_0}^{s_0}(\Omega) \subset B_{r,q_1}^{s_0}(\Omega)$$
for all $1 \le r \le \infty$, $0<s_0<s_1<\infty$, and $1 \le q_0 \le q_1 \le \infty$ due to Proposition 1.4 and Theorem 1.6 in \cite{lunardi}.
Furthermore, in the case of the bounded domain $\Omega$, since $W^{k,\infty}(\Omega) \subset W^{k,r}(\Omega)$ for all $1 \le r \le \infty$ and $k \in \mathbb{N}_0$, we see by \cite{lunardi}*{Theorem 1.6} that $B_{\infty,q}^s(\Omega) \subset B_{r,q}^s(\Omega)$ for all $r,q \in [1,\infty]$ and $0<s<\infty$ as well.
We remark that there are several ways to define the Besov spaces on domains $\Omega$; see, e.g., \cite{muramatu}, \cite{triebel}*{Chapter 4}, and \cite{amann}*{Section 2}, but most of them are equivalent to each other.

We next define the $X$-valued function spaces:
Let $X$ be a Banach space.
Then $L^q(0,\infty;X)$ for $1 \le q \le \infty$ denote the Lebesgue--Bochner spaces with the norm $\|\,\cdot\,\|_{L^q(X)} \coloneqq \|\|\,\cdot\,\|_X\|_{L^q(0,\infty)}$.
We also introduce $W^{1,q}(0,\infty;X)$ similarly.
Let $C((0,\infty);X)$ denote the set of all continuous $X$-valued functions on $\Omega$ and let $C^1((0,\infty);X) \coloneqq \{F \in C((0,\infty);X) \mid \partial_tF \in C((0,\infty);X)\}$.
In addition, $BUC([0,\infty);X)$ denotes the closed subspace of $L^{\infty}(0,\infty;X)$ whose function is bounded uniformly continuous.

Finally, for $r,q \in (1,\infty)$ and $k=0,1$, we introduce the following notations, which imply the usual maximal regularity classes:
\begin{equation}
\begin{aligned}
\mathbb{E}_{q,r}^k &\coloneqq L^q(0,\infty;W^{2+k,r}(\Omega)) \cap W^{1,q}(0,\infty;W^{k,r}(\Omega)), \\
\|\,\cdot\,\|_{\mathbb{E}_{q,r}^k} &\coloneqq \|\,\cdot\,\|_{L^q(W^{2+k,r}(\Omega)) \cap W^{1,q}(W^{k,r}(\Omega))} \coloneqq \|\,\cdot\,\|_{L^q(W^{2+k,r}(\Omega))}+\|\,\cdot\,\|_{W^{1,q}(W^{k,r}(\Omega))}.
\end{aligned}
\end{equation}
We also introduce
\begin{equation}
\begin{aligned}
\mathbb{E}_{q,r}^0(-\Delta_{\mathrm{N}}) &\coloneqq L^q(0,\infty;(W^{2,r} \cap L_0^r)(\Omega)) \cap W^{1,q}(0,\infty;L_0^r(\Omega)), \\
\mathbb{E}_{q,r}^1(1-\Delta_{\mathrm{N},1}) &\coloneqq L^q(0,\infty;W_{\nu}^{3,r}(\Omega)) \cap W^{1,q}(0,\infty;W^{1,r}(\Omega)), \\
\mathbb{E}_{q,r}^0(A) &\coloneqq L^q(0,\infty;D(A)) \cap W^{1,q}(0,\infty;L_{\sigma}^r(\Omega)).
\end{aligned}
\end{equation}

\subsection{Fundamental propositions}

We shall verify the fundamental propositions which will be frequently used in this paper.
In the following, we suppose that $\Omega \subset \mathbb{R}^N$, $N\ge 2$, is a bounded smooth domain.
We first recall the embedding properties of the Besov spaces, which give the relations with the classical spaces like the Sobolev and H\"older spaces:

\begin{prop}\label{besovemb}
Let $r,q \in [1,\infty]$ and $0<s<\infty$.
If $k \in \mathbb{N}_0$ satisfies $k<s$, then the continuous embedding $B_{r,q}^s(\Omega) \subset W^{k,r}(\Omega)$ holds.
In addition, if $s_0 \notin \mathbb{N}$ satisfies $0<s_0 \le s-N/r$, then the continuous embedding $B_{r,q}^s(\Omega) \subset C^{s_0}(\overline{\Omega})$ holds.
\end{prop}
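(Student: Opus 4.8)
The plan is to reduce both embeddings to the whole-space situation, where they are classical, by means of a universal extension operator, and then to exploit the interpolative definition of $B_{r,q}^s(\Omega)$. Fix an integer $m>s$. Since $\partial\Omega$ is smooth, there is a Stein-type extension operator $E$ that is bounded simultaneously as $E:L^r(\Omega)\to L^r(\mathbb{R}^N)$ and $E:W^{m,r}(\Omega)\to W^{m,r}(\mathbb{R}^N)$, and which satisfies $R_\Omega E=\mathrm{id}$ for the restriction map $R_\Omega$. By real interpolation, $E$ is then bounded as $E:B_{r,q}^s(\Omega)=(L^r(\Omega),W^{m,r}(\Omega))_{s/m,q}\to(L^r(\mathbb{R}^N),W^{m,r}(\mathbb{R}^N))_{s/m,q}$, and the target coincides with the usual Besov space $B_{r,q}^s(\mathbb{R}^N)$ defined via the Littlewood--Paley decomposition. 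Because $R_\Omega$ is bounded on the Lebesgue, Sobolev and Hölder scales and equals a left inverse of $E$, it suffices to establish the two embeddings for $B_{r,q}^s(\mathbb{R}^N)$ and then compose with $E$ on the left and $R_\Omega$ on the right.

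For the Sobolev-type embedding, denote by $\Delta_j u$ the Littlewood--Paley blocks of $u$ (frequencies of size $\sim 2^j$). Summing $\|v\|_{L^r(\mathbb{R}^N)}\le\sum_j\|\Delta_j v\|_{L^r(\mathbb{R}^N)}$ gives $B_{r,1}^0(\mathbb{R}^N)\subset L^r(\mathbb{R}^N)$; applying this to $\partial^\alpha u$ for $|\alpha|\le k$, together with the mapping property $\partial^\alpha:B_{r,1}^k(\mathbb{R}^N)\to B_{r,1}^{k-|\alpha|}(\mathbb{R}^N)\subset B_{r,1}^0(\mathbb{R}^N)$, yields $B_{r,1}^k(\mathbb{R}^N)\subset W^{k,r}(\mathbb{R}^N)$. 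Since $k<s$, the monotonicity of Besov spaces in the smoothness and fine indices (as recorded in Section~\ref{sec-2}, which is equally valid on $\mathbb{R}^N$) gives $B_{r,q}^s(\mathbb{R}^N)\subset B_{r,\infty}^s(\mathbb{R}^N)\subset B_{r,1}^k(\mathbb{R}^N)$. Composing proves the first assertion.

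For the Hölder-type embedding, Bernstein's inequality $\|\Delta_j u\|_{L^\infty(\mathbb{R}^N)}\le C\,2^{jN/r}\|\Delta_j u\|_{L^r(\mathbb{R}^N)}$, multiplied by $2^{j(s-N/r)}$ and summed in $\ell^q$ over $j$, yields the sharp embedding $B_{r,q}^s(\mathbb{R}^N)\subset B_{\infty,q}^{s-N/r}(\mathbb{R}^N)$. For any $s_0\le s-N/r$ the monotonicity embeddings give $B_{\infty,q}^{s-N/r}(\mathbb{R}^N)\subset B_{\infty,\infty}^{s-N/r}(\mathbb{R}^N)\subset B_{\infty,\infty}^{s_0}(\mathbb{R}^N)$, and since $s_0\notin\mathbb{N}$ one invokes the Hölder--Zygmund identification $B_{\infty,\infty}^{s_0}(\mathbb{R}^N)=C^{s_0}(\mathbb{R}^N)$. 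Restricting to $\overline{\Omega}$ and composing with $E$ gives $B_{r,q}^s(\Omega)\subset C^{s_0}(\overline{\Omega})$.

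The only genuinely delicate point is the endpoint case $s_0=s-N/r$ of the Hölder embedding: there one cannot first lower the fine index to $1$ (which would require $B_{r,q}^s\subset B_{r,1}^{s_0+N/r}$, false when $q>1$), so the sharp Besov-into-$L^\infty$ embedding combined with the non-integrality of $s_0$ is essential; the non-integrality also prevents the Zygmund space from being strictly larger than the classical Hölder space. A secondary technical matter is producing one extension operator bounded on $L^r(\Omega)$ and $W^{m,r}(\Omega)$ at once, so that the interpolative definition of $B_{r,q}^s(\Omega)$ transfers to $\mathbb{R}^N$; this is handled by Stein's universal extension, or one may instead cite the corresponding embeddings directly from \cite{muramatu}*{Theorem~2}.
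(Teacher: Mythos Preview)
Your proposal is correct. The paper does not supply its own proof of this proposition at all: it simply refers the reader to \cite{muramatu}*{Theorem~2}. Your argument---reducing to $\mathbb{R}^N$ via a Stein extension, then invoking the Littlewood--Paley characterization, Bernstein's inequality, and the H\"older--Zygmund identification $B_{\infty,\infty}^{s_0}=C^{s_0}$ for non-integer $s_0$---is a standard and self-contained route to the same conclusions, and you even flag the endpoint subtlety at $s_0=s-N/r$ that makes the sharp Besov embedding (rather than a crude index-lowering) necessary. In short, the paper outsources the proof to the literature while you actually carry it out; there is nothing to compare at the level of strategy.
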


For the proof, we refer to Muramatu \cite{muramatu}*{Theorem 2}.
We next recall a characterization of the trace space \cite{lunardi}*{Corollary 1.14}, which ensures that the maximal regularity class naturally includes the space of the initial data.

\begin{prop}\label{traceemb}
Let $r,q \in (1,\infty)$ and assume that $F_k \in \mathbb{E}_{q,r}^k$ for $k=0,1$.
It holds that $F_k \in BUC([0,\infty);B_{r,q}^{2+k-2/q}(\Omega))$ for $k=0,1$ with the estimate
\begin{equation}
\|F_k\|_{L^{\infty}(B_{r,q}^{2+k-2/q}(\Omega))} \le C\|F_k\|_{\mathbb{E}_{q,r}^k},
\end{equation}
where $C>0$ is a constant independent of $F_k$.
\end{prop}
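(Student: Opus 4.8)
The final statement to prove is Proposition~\ref{traceemb}, the characterization of the trace space: if $F_k \in \mathbb{E}_{q,r}^k = L^q(0,\infty;W^{2+k,r}(\Omega)) \cap W^{1,q}(0,\infty;W^{k,r}(\Omega))$, then $F_k \in BUC([0,\infty);B_{r,q}^{2+k-2/q}(\Omega))$ with the corresponding norm estimate. This is essentially a citation to \cite{lunardi}*{Corollary 1.14}, but let me sketch how one reduces it to that abstract result.

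\medskip

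\textbf{Plan.} The strategy is to set this up as a trace theorem for the real interpolation couple and invoke the abstract result. Fix $k \in \{0,1\}$, write $X_0 \coloneqq W^{k,r}(\Omega)$ and $X_1 \coloneqq W^{2+k,r}(\Omega)$, so that $\mathbb{E}_{q,r}^k = L^q(0,\infty;X_1) \cap W^{1,q}(0,\infty;X_0)$. The abstract trace theorem (Lunardi, Corollary~1.14, or equivalently the classical result of Lions--Peetre) states that for $1 < q < \infty$ the trace at $t=0$ of a function in $L^q(0,\infty;X_1) \cap W^{1,q}(0,\infty;X_0)$ lies in the real interpolation space $(X_0,X_1)_{1-1/q,q}$, and more precisely the \emph{whole curve} $t \mapsto F_k(t)$ — after modification on a null set — is bounded and uniformly continuous from $[0,\infty)$ into $(X_0,X_1)_{1-1/q,q}$, with $\sup_{t \ge 0}\|F_k(t)\|_{(X_0,X_1)_{1-1/q,q}} \le C\|F_k\|_{\mathbb{E}_{q,r}^k}$. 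So the first step is just to quote this, and the second step is to identify the interpolation space: by the reiteration theorem (or directly from the definition $B_{r,q}^s(\Omega) = (L^r(\Omega),W^{m,r}(\Omega))_{s/m,q}$ given in Section~\ref{sec-2}), one has $(W^{k,r}(\Omega),W^{2+k,r}(\Omega))_{1-1/q,q} = B_{r,q}^{k + 2(1-1/q)}(\Omega) = B_{r,q}^{2+k-2/q}(\Omega)$. This identification uses that the Sobolev scale interpolates in the expected way on a bounded smooth domain, which follows from the existence of an extension operator $W^{m,r}(\Omega) \to W^{m,r}(\mathbb{R}^N)$ (a retraction–coretraction argument, standard for smooth bounded domains) together with the known interpolation identities on $\mathbb{R}^N$.

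\medskip

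Once these two ingredients are in place, the conclusion is immediate: $F_k \in BUC([0,\infty);B_{r,q}^{2+k-2/q}(\Omega))$ and the norm bound $\|F_k\|_{L^\infty(B_{r,q}^{2+k-2/q}(\Omega))} \le C\|F_k\|_{\mathbb{E}_{q,r}^k}$ holds with $C = C(\Omega,r,q)$ independent of $F_k$. The value of $k$ being only $0$ or $1$ is inessential — the argument is uniform in $k$, with the single caveat that $1-1/q \in (0,1)$ since $q \in (1,\infty)$, which is exactly the hypothesis, and the endpoint subtlety at $t=0$ (the existence of a bounded uniformly continuous representative rather than merely an $L^\infty$ bound) is precisely the content of the cited Lunardi corollary, so no extra work is needed there.

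\medskip

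\textbf{Main obstacle.} There is no genuine obstacle; the statement is a packaged form of a classical trace/interpolation fact. The only point requiring a line of care is the interpolation identity $(W^{k,r}(\Omega),W^{2+k,r}(\Omega))_{1-1/q,q} = B_{r,q}^{2+k-2/q}(\Omega)$ on the bounded smooth domain — one must either cite a reference for interpolation of Sobolev spaces on such domains (e.g. via the boundedness of a common extension operator) or note that the definition of $B_{r,q}^s(\Omega)$ adopted in Section~\ref{sec-2} together with the reiteration theorem \cite{lunardi}*{Corollary~1.24 or Section~1.2.4} gives it directly. Accordingly, in the write-up I would simply state the proof as: ``This follows from \cite{lunardi}*{Corollary 1.14} applied to the couple $(W^{k,r}(\Omega),W^{2+k,r}(\Omega))$, upon noting the interpolation identity $(W^{k,r}(\Omega),W^{2+k,r}(\Omega))_{1-1/q,q}=B_{r,q}^{2+k-2/q}(\Omega)$,'' perhaps with a parenthetical reference for the latter.
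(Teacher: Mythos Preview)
Your proposal is correct and follows exactly the approach the paper takes: the paper does not give a proof at all but simply cites \cite{lunardi}*{Corollary 1.14} as the source of the result, and your write-up is precisely the unpacking of that citation (apply the abstract trace theorem to the couple $(W^{k,r}(\Omega),W^{2+k,r}(\Omega))$ and identify the resulting interpolation space with $B_{r,q}^{2+k-2/q}(\Omega)$).
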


Note that, by combining Propositions \ref{besovemb} and \ref{traceemb}, we observe that the conditions $r>N$ and $q>2$ in Theorem \ref{globalsol} ensure the following embeddings of initial data $n_0$ and solutions $(n,c,\bm{u})$ to \eqref{ABS}.

\begin{corr}\label{ncuemb}
Let $N<r<\infty$ and $2<q<\infty$.
Let $\overline{n}_0 \coloneqq |\Omega|^{-1}\int_{\Omega}n_0(x)\,dx$ be the mean value of $n_0$.
Then the following statements hold:
\begin{enumerate}
\item Assume that $n_0 \in B_{r,q}^{2-2/q}(\Omega)$.
Then there holds
$$|\overline{n}_0|+\|\overline{n}_0\|_{B_{r,q}^{2-2/q}(\Omega)} \le C\|n_0\|_{B_{r,q}^{2-2/q}(\Omega)},$$
where $C>0$ is a constant independent of $n_0$.

\item Assume that $n \in \mathbb{E}_{q,r}^0$ and $c \in \mathbb{E}_{q,r}^1$.
Then there hold
$$\|n\|_{L^{\infty}(W^{1,r}(\Omega))} \le C\|n\|_{\mathbb{E}_{q,r}^0}, \quad \|c\|_{L^{\infty}(W^{2,r}(\Omega))} \le C\|c\|_{\mathbb{E}_{q,r}^1},$$
where $C>0$ is a constant independent of $n$ and $c$.
In addition, it holds that
$$n \in BUC([0,\infty);C(\overline{\Omega})), \quad c \in BUC([0,\infty);C^1(\overline{\Omega})).$$
\end{enumerate}
\end{corr}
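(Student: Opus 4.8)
The plan is to obtain Corollary \ref{ncuemb} by straightforwardly chaining the embedding results of Propositions \ref{besovemb} and \ref{traceemb}, together with two elementary facts about constant functions on the bounded domain $\Omega$. There is no genuine obstacle; the only point requiring care is to check that the smoothness indices $2-2/q$ and $3-2/q$, combined with the standing restrictions $N<r<\infty$ and $2<q<\infty$, actually satisfy the hypotheses of Proposition \ref{besovemb}.

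For assertion (i), I would first note that $q>2$ gives $2-2/q>1>0$, so Proposition \ref{besovemb} yields $B_{r,q}^{2-2/q}(\Omega)\subset L^r(\Omega)$; hence, by H\"older's inequality on the bounded domain $\Omega$,
\[
|\overline{n}_0|\le |\Omega|^{-1}\|n_0\|_{L^1(\Omega)}\le |\Omega|^{-1/r}\|n_0\|_{L^r(\Omega)}\le C\|n_0\|_{B_{r,q}^{2-2/q}(\Omega)}.
\]
For the Besov norm of $\overline{n}_0$, I would regard the number $\overline{n}_0$ as the constant function $x\mapsto\overline{n}_0$, which lies in $W^{2,r}(\Omega)$ with $\|\overline{n}_0\|_{W^{2,r}(\Omega)}=|\Omega|^{1/r}|\overline{n}_0|$; then the embedding $W^{2,r}(\Omega)\subset B_{r,q}^{2-2/q}(\Omega)$ (valid since $2>2-2/q$) together with the previous bound gives $\|\overline{n}_0\|_{B_{r,q}^{2-2/q}(\Omega)}\le C|\overline{n}_0|\le C\|n_0\|_{B_{r,q}^{2-2/q}(\Omega)}$, which completes (i).

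For assertion (ii), I would apply Proposition \ref{traceemb}: $n\in\mathbb{E}_{q,r}^0$ forces $n\in BUC([0,\infty);B_{r,q}^{2-2/q}(\Omega))$ and $c\in\mathbb{E}_{q,r}^1$ forces $c\in BUC([0,\infty);B_{r,q}^{3-2/q}(\Omega))$, with norm control by $\|n\|_{\mathbb{E}_{q,r}^0}$ and $\|c\|_{\mathbb{E}_{q,r}^1}$ respectively. Since $q>2$ gives $2-2/q>1$ and $3-2/q>2$, Proposition \ref{besovemb} provides $B_{r,q}^{2-2/q}(\Omega)\subset W^{1,r}(\Omega)$ and $B_{r,q}^{3-2/q}(\Omega)\subset W^{2,r}(\Omega)$, which delivers the two stated $L^\infty$-in-time estimates. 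Finally, since $r>N$ and $q>2$ we have $2-2/q-N/r>0$ and $3-2/q-N/r>1$; choosing a non-integral $s_0\in(0,1)$ with $s_0\le 2-2/q-N/r$ and a non-integral $s_0'\in(1,2)$ with $s_0'\le 3-2/q-N/r$, Proposition \ref{besovemb} gives $B_{r,q}^{2-2/q}(\Omega)\subset C^{s_0}(\overline{\Omega})\subset C(\overline{\Omega})$ and $B_{r,q}^{3-2/q}(\Omega)\subset C^{s_0'}(\overline{\Omega})\subset C^1(\overline{\Omega})$; composing these with the $BUC$-in-time regularity already obtained yields $n\in BUC([0,\infty);C(\overline{\Omega}))$ and $c\in BUC([0,\infty);C^1(\overline{\Omega}))$.

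The \emph{main (modest) obstacle} is purely bookkeeping: one must ensure the chosen H\"older exponents are not integers so that Proposition \ref{besovemb} applies, and that the strict inequalities $2-2/q>1$, $3-2/q>2$, $2-2/q-N/r>0$, and $3-2/q-N/r>1$ genuinely hold. This is precisely where the assumptions $N<r<\infty$ and $2<q<\infty$ enter, and it explains why these restrictions are imposed in Theorem \ref{globalsol}.
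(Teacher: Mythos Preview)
Your proposal is correct and follows essentially the same route as the paper: apply Proposition~\ref{traceemb} for the time-trace and then Proposition~\ref{besovemb} for the spatial embeddings, using $q>2$ for the Sobolev embeddings and $r>N$ for the H\"older embeddings. The only cosmetic difference is in part~(i), where the paper bounds $|\overline{n}_0|$ via the embedding $B_{r,q}^{2-2/q}(\Omega)\subset C(\overline{\Omega})$ and the $L^\infty$-norm, whereas you use the more elementary embedding into $L^r(\Omega)$ together with H\"older's inequality; both arguments are equally valid.
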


\begin{proof}
\textbf{(i)} Since Proposition \ref{besovemb} yields $B_{r,q}^{2-2/q}(\Omega) \subset C(\overline{\Omega})$ due to $r>N$ and $q>2$, we have
\begin{equation}
|\overline{n}_0| \le \frac{1}{|\Omega|}\int_{\Omega}|n_0(x)|\,dx \le \|n_0\|_{L^{\infty}(\Omega)} \le C\|n_0\|_{B_{r,q}^{2-2/q}(\Omega)}.
\end{equation}
In addition, since $\overline{n}_0$ is the constant, we see by $W^{2,r}(\Omega) \subset B_{r,q}^{2-2/q}(\Omega) \subset L^r(\Omega)$ that
\begin{equation}
\|\overline{n}_0\|_{B_{r,q}^{2-2/q}(\Omega)} \le C\|\overline{n}_0\|_{W^{2,r}(\Omega)} \le C\|\overline{n}_0\|_{L^r(\Omega)} \le C|\overline{n}_0|\|1\|_{L^r(\Omega)} \le C\|n_0\|_{B_{r,q}^{2-2/q}(\Omega)}.
\end{equation}

\noindent
\textbf{(ii)} Proposition \ref{besovemb} with the condition $q>2$ implies that $B_{r,q}^{2-2/q}(\Omega) \subset W^{1,r}(\Omega)$ and $B_{r,q}^{3-2/q}(\Omega) \subset W^{2,r}(\Omega)$.
Thus we have the desired result from Proposition \ref{traceemb} and the condition $r>N$.
\end{proof}

Finally, we verify the basic properties due to the Gauss divergence theorem.

\begin{prop}\label{gaussdiv}
Let $1<r<\infty$ and assume that $\psi \in W^{2,r}(\Omega)$.
If $\bm{\psi}_{\mathrm{B}} \in W^{1,r}(\Omega)^N$ satisfies $\nabla \psi\cdot\bm{\nu}=\bm{\psi}_{\mathrm{B}}\cdot\bm{\nu}$ on $\partial\Omega$, then there holds
$$\int_{\Omega}\Delta \psi(x)\,dx=\int_{\Omega}\nabla \cdot \bm{\psi}_{\mathrm{B}}(x)\,dx.$$
In addition, if $\bm{\psi}_{\sigma} \in D(A)$, then there holds
$$\int_{\Omega}(\bm{\psi}_{\sigma}\cdot\nabla \psi)(x)\,dx=0.$$
\end{prop}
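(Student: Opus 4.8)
For the first identity the plan is to turn both sides into boundary integrals via the Gauss divergence theorem and then invoke the hypothesis on the normal traces. Since $\psi \in W^{2,r}(\Omega)$, the vector field $\nabla \psi$ belongs to $W^{1,r}(\Omega)^N \subset W^{1,1}(\Omega)^N$, so on the bounded smooth domain $\Omega$ the divergence theorem gives
\[
\int_{\Omega}\Delta \psi(x)\,dx=\int_{\Omega}\nabla\cdot(\nabla\psi)(x)\,dx=\int_{\partial\Omega}(\gamma\nabla\psi)(x)\cdot\bm{\nu}(x)\,dS(x),
\]
and, likewise, since $\bm{\psi}_{\mathrm{B}}\in W^{1,r}(\Omega)^N$,
\[
\int_{\Omega}\nabla\cdot\bm{\psi}_{\mathrm{B}}(x)\,dx=\int_{\partial\Omega}(\gamma\bm{\psi}_{\mathrm{B}})(x)\cdot\bm{\nu}(x)\,dS(x).
\]
The assumption $\nabla\psi\cdot\bm{\nu}=\bm{\psi}_{\mathrm{B}}\cdot\bm{\nu}$ on $\partial\Omega$ — read as $(\gamma\nabla\psi)\cdot\bm{\nu}=(\gamma\bm{\psi}_{\mathrm{B}})\cdot\bm{\nu}$ in $W^{1-1/r,r}(\partial\Omega)$, in particular in $L^1(\partial\Omega)$ — then forces the two boundary integrals to coincide, which is exactly the claim.

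For the second identity the plan is to integrate by parts once, using that $\bm{\psi}_{\sigma}$ is divergence-free and has vanishing boundary trace. From $\bm{\psi}_{\sigma}\in D(A)\subset L_{\sigma}^r(\Omega)$ one has $\nabla\cdot\bm{\psi}_{\sigma}=0$, hence $\bm{\psi}_{\sigma}\cdot\nabla\psi=\nabla\cdot(\psi\bm{\psi}_{\sigma})-\psi\,(\nabla\cdot\bm{\psi}_{\sigma})=\nabla\cdot(\psi\bm{\psi}_{\sigma})$, and the divergence theorem gives
\[
\int_{\Omega}(\bm{\psi}_{\sigma}\cdot\nabla\psi)(x)\,dx=\int_{\partial\Omega}(\gamma\psi)(x)\,(\gamma\bm{\psi}_{\sigma})(x)\cdot\bm{\nu}(x)\,dS(x)=0,
\]
because $\bm{\psi}_{\sigma}\in W_0^{1,r}(\Omega)^N$ has $\gamma\bm{\psi}_{\sigma}=0$. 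Equivalently, one may integrate by parts componentwise, $\int_{\Omega}(\psi_{\sigma})_j\,\partial_j\psi\,dx=-\int_{\Omega}\psi\,\partial_j(\psi_{\sigma})_j\,dx+\int_{\partial\Omega}\psi\,(\psi_{\sigma})_j\,\nu_j\,dS$, and sum over $j$.

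The only point that requires a little care — and the nearest thing to an obstacle — is to justify the divergence theorem / product rule at the stated generality $1<r<\infty$, i.e.\ that $\psi\bm{\psi}_{\sigma}\in W^{1,1}(\Omega)^N$ (and $\bm{\psi}_{\sigma}\cdot\nabla\psi,\ \psi\,\nabla\cdot\bm{\psi}_{\sigma}\in L^1(\Omega)$) so that the trace formula applies. This follows from the extra regularity $\bm{\psi}_{\sigma}\in W^{2,r}(\Omega)^N$ together with the Sobolev embeddings, which place $\nabla\psi$ and $\bm{\psi}_{\sigma}$ in dual Lebesgue exponents; in the borderline Sobolev cases one first argues for smooth $\psi$ and $\bm{\psi}_{\sigma}$ and then passes to the limit by density of smooth functions in $W^{2,r}(\Omega)$ and (together with the boundary conditions) in $D(A)$. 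None of this is delicate; the substance of the proposition is simply the Gauss--Green identity combined with the two boundary conditions.
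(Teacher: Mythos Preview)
Your proof is correct and follows exactly the same route as the paper: convert each volume integral to a boundary integral via the Gauss divergence theorem, match them through the hypothesis $\nabla\psi\cdot\bm{\nu}=\bm{\psi}_{\mathrm{B}}\cdot\bm{\nu}$, and for the second identity rewrite $\bm{\psi}_{\sigma}\cdot\nabla\psi=\nabla\cdot(\psi\bm{\psi}_{\sigma})$ and use $\bm{\psi}_{\sigma}=0$ on $\partial\Omega$. The paper's proof is in fact shorter and omits the integrability discussion you included, simply invoking the divergence theorem without further comment.
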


\begin{proof}
The Gauss divergence theorem implies that
$$\int_{\Omega}\Delta \psi(x)\,dx=\int_{\partial\Omega}(\nabla\psi \cdot \bm{\nu})(x)\,d\sigma(x)=\int_{\partial\Omega}(\bm{\psi}_{\mathrm{B}}\cdot\bm{\nu})(x)\,d\sigma(x)=\int_{\Omega}\nabla \cdot \bm{\psi}_{\mathrm{B}}(x)\,dx.$$
In addition, since $\nabla \cdot \bm{\psi}_{\sigma}=0$ in $\Omega$ and $\bm{\psi}_{\sigma}=0$ on $\partial\Omega$, by applying the Gauss divergence theorem again, we have
$$\int_{\Omega}(\bm{\psi}_{\sigma}\cdot\nabla \psi)(x)\,dx=\int_{\Omega}\nabla \cdot (\psi\bm{\psi}_{\sigma})(x)\,dx=\int_{\partial\Omega}(\psi\bm{\psi}_{\sigma} \cdot \bm{\nu})(x)\,d\sigma(x)=0,$$
which proves the claim.
\end{proof}

\section{Maximal regularity theorems for the linearized equations}
\label{sec-3}

In this section, we describe maximal regularity theorems of the linear heat equation with a Neumann boundary condition and the Stokes system.

\subsection{Remark on the heat equation with a Neumann boundary condition}

The main purpose of this subsection is to show the following maximal regularity theorem for the linear heat equation, which is \textit{not} a simple consequence of the usual maximal regularity theory; compare our claim with Proposition \ref{maxneu} below.

\begin{theo}\label{maxneu0}
Let $r,q \in (1,\infty)$ satisfy $1/r+2/q \neq 1$ and let $0 \le \lambda<\lambda_{\mathrm{N}}(\Omega)/q$, where $\lambda_{\mathrm{N}}(\Omega)>0$ is given by \eqref{lamdef}.
Assume that $F_0$, $F_{\mathrm{E}}$, and $\bm{F}_{\mathrm{B}}$ satisfy
\begin{equation}\label{Fcond}
\left\{\begin{gathered}
F_0 \in (B_{r,q}^{2-2/q} \cap L_0^r)(\Omega), \quad e^{\lambda t}F_{\mathrm{E}} \in L^q(0,\infty;L_0^r(\Omega)), \\
e^{\lambda t}\bm{F}_{\mathrm{B}} \in  L^q(0,\infty;W^{1,r}(\Omega)^N) \cap W^{1,q}(0,\infty;L^r(\Omega)^N).
\end{gathered}\right.
\end{equation}
Furthermore, if $1/r+2/q<1$, assume that $F_0$ and $\bm{F}_{\mathrm{B}}$ satisfy the additional condition $\nabla F_0 \cdot \bm{\nu}=\bm{F}_{\mathrm{B}}(0,\,\cdot\,) \cdot \bm{\nu}$ on $\partial\Omega$.
Then there exists a unique global strong solution $U$ to the equations
\begin{equation}\label{nheat}
\left\{\begin{aligned}
\partial_tU-\Delta U &=-\nabla \cdot \bm{F}_{\mathrm{B}}+F_{\mathrm{E}}, & t>0, \, x &\in \Omega, \\
\nabla U \cdot \bm{\nu} &=\bm{F}_{\mathrm{B}} \cdot \bm{\nu}, & t>0, \, x &\in \partial\Omega, \\
U (0,x) &= F_0(x), & x &\in \Omega
\end{aligned}\right.
\end{equation}
such that $e^{\lambda t}U \in \mathbb{E}_{q,r}^0(-\Delta_{\mathrm{N}})$. The boundary condition is satisfied in the sense that
\begin{equation}\label{VBC}
(\gamma(\nabla U(t,\,\cdot\,)-\bm{F}_{\mathrm{B}}(t,\,\cdot\,))(x))\cdot \bm{\nu}(x)=0
\end{equation}
for a.e.~$(t,x) \in (0,\infty) \times \partial\Omega$. Moreover, it holds that
\begin{equation}\label{nheatest}
\|e^{\lambda t}U\|_{\mathbb{E}_{q,r}^0} \le C\left(\|F_0\|_{B_{r,q}^{2-2/q}(\Omega)}+\|e^{\lambda t}F_{\mathrm{E}}\|_{L^q(L^r(\Omega))}+\|e^{\lambda t}\bm{F}_{\mathrm{B}}\|_{L^q(W^{1,r}(\Omega)) \cap W^{1,q}(L^r(\Omega))}\right),
\end{equation}
where $C>0$ is a constant independent of $F_0$, $F_{\mathrm{E}}$, $\bm{F}_{\mathrm{B}}$, and $U$.
\end{theo}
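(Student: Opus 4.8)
The plan is to combine the known maximal regularity theorem for the inhomogeneous Neumann heat equation on $(0,\infty)$ (which does \emph{not} by itself give exponential decay, cf.~Proposition~\ref{maxneu} below, i.e.~\cite{prusssimonett}*{Theorem 6.3.2}) with the exponential weight trick used in \cite{shibata}*{Section 4}. First I would substitute $V \coloneqq e^{\lambda t}U$, so that $V$ satisfies
\begin{equation*}
\partial_t V - \Delta V - \lambda V = -\nabla\cdot(e^{\lambda t}\bm{F}_{\mathrm{B}}) + e^{\lambda t}F_{\mathrm{E}}, \quad \nabla V\cdot\bm{\nu} = e^{\lambda t}\bm{F}_{\mathrm{B}}\cdot\bm{\nu}, \quad V(0,\cdot) = F_0.
\end{equation*}
Writing $G_{\mathrm{E}} \coloneqq e^{\lambda t}F_{\mathrm{E}}$ and $\bm{G}_{\mathrm{B}} \coloneqq e^{\lambda t}\bm{F}_{\mathrm{B}}$, the hypotheses \eqref{Fcond} say precisely that $G_{\mathrm{E}} \in L^q(L_0^r(\Omega))$ and $\bm{G}_{\mathrm{B}} \in L^q(W^{1,r}(\Omega)^N) \cap W^{1,q}(L^r(\Omega)^N)$, with $F_0 \in (B_{r,q}^{2-2/q}\cap L_0^r)(\Omega)$ and (when $1/r+2/q<1$) the compatibility $\nabla F_0\cdot\bm{\nu} = \bm{G}_{\mathrm{B}}(0,\cdot)\cdot\bm{\nu}$. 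The goal is then to solve this shifted problem for $V$ in $\mathbb{E}_{q,r}^0(-\Delta_{\mathrm{N}})$, i.e.~with $V \in L^q((W^{2,r}\cap L_0^r)(\Omega)) \cap W^{1,q}(L_0^r(\Omega))$ and the bound \eqref{nheatest}. The key point making the mean-zero space the right one is the Gauss divergence identity (Proposition~\ref{gaussdiv}): since $\nabla V\cdot\bm{\nu}=\bm{G}_{\mathrm{B}}\cdot\bm{\nu}$ we get $\int_\Omega(\Delta V - \nabla\cdot\bm{G}_{\mathrm{B}})\,dx = 0$, and as $\int_\Omega G_{\mathrm{E}}\,dx=0$ and $\int_\Omega F_0\,dx=0$, integrating the PDE over $\Omega$ yields $\frac{d}{dt}\int_\Omega V\,dx = \lambda\int_\Omega V\,dx$ with zero initial value, hence $\int_\Omega V(t,\cdot)\,dx \equiv 0$; this is what keeps us inside $L_0^r(\Omega)$ for all time.

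The core analytic step is a spectral-gap argument for the Neumann--Laplacian on $L_0^r(\Omega)$. The plan is to establish that $-\Delta_{\mathrm{N}}$ restricted to the mean-zero space generates an analytic $C_0$-semigroup of negative exponential type, with the spectral bound controlled by $\lambda_{\mathrm{N}}(\Omega)$ through the Poincar\'e inequality $\|\psi\|_{L^2}\le\sqrt{\lambda_{\mathrm{N}}(\Omega)}\|\nabla\psi\|_{L^2}$. Concretely, for $r=2$ one reads off from the form $\langle -\Delta_{\mathrm{N}}\psi,\psi\rangle = \|\nabla\psi\|_{L^2}^2 \ge \lambda_{\mathrm{N}}(\Omega)^{-1}\|\psi\|_{L^2}^2$ that the first nonzero Neumann eigenvalue is $\ge \lambda_{\mathrm{N}}(\Omega)^{-1}$; by the standard $L^r$-theory of the Neumann--Laplacian on a bounded smooth domain (resolvent estimates, $r$-independence of the spectrum in the bounded case) this spectral bound persists for general $r\in(1,\infty)$. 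Thus the resolvent set contains a half-plane $\{\mathrm{Re}\,z > -\mu\}$ for any $\mu < \lambda_{\mathrm{N}}(\Omega)^{-1}$, with sectorial resolvent estimates there. Since the condition on $\lambda$ in the theorem is $0\le\lambda<\lambda_{\mathrm{N}}(\Omega)/q$ — note carefully this is $\lambda_{\mathrm{N}}(\Omega)/q$, \emph{not} $\lambda_{\mathrm{N}}(\Omega)^{-1}$, reflecting that the exponential-weight bookkeeping in the nonlinear application must absorb a factor related to the time-integrability index $q$ — I would check that the shift by $\lambda$ keeps the operator $-\Delta_{\mathrm{N}} - \lambda$ with spectrum in a sector strictly inside the left half-plane, so that it still admits $L^q$-maximal regularity on $(0,\infty)$ (e.g.~by the Dore--Venni/operator-sum theorem or the $\mathcal{R}$-sectoriality criteria summarized in \cite{prusssimonett}). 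This gives maximal regularity for the part of the problem with homogeneous boundary data and $F_0$ in the trace space.

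Next I would handle the inhomogeneous boundary term. The plan is to reduce to homogeneous boundary data by subtracting off a solution $W$ of the pure boundary-value problem $\partial_t W - \Delta W - \lambda W = 0$ in $\Omega$, $\nabla W\cdot\bm{\nu} = \bm{G}_{\mathrm{B}}\cdot\bm{\nu}$ on $\partial\Omega$, $W(0,\cdot)$ chosen compatibly; here the sharp fact is that the boundary datum $g \coloneqq \bm{G}_{\mathrm{B}}\cdot\bm{\nu}$ lies, by the trace theory for the anisotropic space $L^q(W^{1,r})\cap W^{1,q}(L^r)$, in the correct boundary trace class $W^{1/2-1/(2r),q}(L^r(\partial\Omega)) \cap L^q(W^{1-1/r,r}(\partial\Omega))$, which is exactly the class for which Proposition~\ref{maxneu} (the Pr\"uss--Simonett theorem) provides a solution with full maximal regularity and the estimate controlled by $\|\bm{G}_{\mathrm{B}}\|_{L^q(W^{1,r})\cap W^{1,q}(L^r)}$. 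The subtlety in the critical trace exponent is precisely governed by the sign of $1/r+2/q-1$: when $1/r+2/q>1$ there is no trace of $\nabla F_0$ and no compatibility is needed, while when $1/r+2/q<1$ one must impose $\nabla F_0\cdot\bm{\nu}=\bm{G}_{\mathrm{B}}(0,\cdot)\cdot\bm{\nu}$, which is the hypothesis of the theorem; the excluded case $1/r+2/q=1$ is the borderline where traces are delicate. For the residual $V - W$ one then applies the maximal regularity of $-\Delta_{\mathrm{N}}-\lambda$ on $L_0^r(\Omega)$ from the previous paragraph (after projecting the right-hand side onto mean-zero, which the divergence identity again guarantees is automatic), obtaining $V$ and hence $U = e^{-\lambda t}V$ in the asserted class. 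The boundary identity \eqref{VBC} in the a.e.~trace sense follows because $\nabla U - \bm{F}_{\mathrm{B}} \in L^q(W^{1,r}(\Omega)^N)$ has a well-defined trace in $L^q(W^{1-1/r,r}(\partial\Omega)^N)$ whose normal component vanishes by construction, and uniqueness follows from the maximal regularity estimate applied to the difference of two solutions (zero data forces zero). The estimate \eqref{nheatest} is just the sum of the two maximal regularity bounds plus the trace estimate for $F_0$.

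The main obstacle I anticipate is pinning down the exponential-decay rate, i.e.~showing the shifted operator $-\Delta_{\mathrm{N}}-\lambda$ still has $L^q((0,\infty))$-maximal regularity \emph{with} the weight, uniformly, for the full admissible range $0\le\lambda<\lambda_{\mathrm{N}}(\Omega)/q$. As the authors themselves note, this does not follow from the off-the-shelf maximal regularity theory (which is typically stated with a generic shift and on finite intervals or gives only boundedness, not exponential decay); one has to combine the sectoriality/spectral-bound information with the weighted-$L^q$ estimate, and in particular justify that the boundary-layer corrector $W$ also decays like $e^{-\lambda t}$ — this is where the Shibata-type argument in \cite{shibata}*{Section 4}, exploiting the exponential stability of the underlying semigroup together with a careful splitting into the homogeneous-data solution and the boundary corrector, becomes essential. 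Verifying that the exponential weight is compatible with \emph{both} pieces of the anisotropic boundary norm $L^q(W^{1,r})\cap W^{1,q}(L^r)$ simultaneously, and that no loss occurs at the critical interface $1/r+2/q=1$ (which is why it is excluded), is the delicate bookkeeping at the heart of the proof.
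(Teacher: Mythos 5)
Your setup (passing to $V=e^{\lambda t}U$, checking via Proposition \ref{gaussdiv} that the full problem preserves the mean-zero constraint, and exploiting the spectral gap of the Neumann--Laplacian on $L_0^r(\Omega)$) agrees with the paper's framework, but the central construction does not work as proposed, and the step you yourself flag as the "main obstacle" is exactly the one that needs a different mechanism. Two things break in your corrector decomposition $V=W+(V-W)$. First, the corrector problem $\partial_tW-\Delta W-\lambda W=0$, $\nabla W\cdot\bm{\nu}=\bm{G}_{\mathrm{B}}\cdot\bm{\nu}$ has no maximal regularity on the half-line $(0,\infty)$: it is posed on all of $L^r(\Omega)$ (nothing forces $W$ to be mean-zero), where $\Delta+\lambda$ with Neumann conditions has the eigenvalue $\lambda\ge 0$ coming from the constants, so the underlying semigroup does not decay and \cite{prusssimonett}*{Theorem 6.3.2} applies only after adding a sufficiently large shift $\omega\ge\omega_0$, which you do not explain how to remove. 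Second, even granting such a $W$, the residual $V-W$ is \emph{not} mean-zero: it solves the equation with source $-\nabla\cdot\bm{G}_{\mathrm{B}}+G_{\mathrm{E}}$ and \emph{homogeneous} Neumann data, and $\int_\Omega\nabla\cdot\bm{G}_{\mathrm{B}}\,dx=\int_{\partial\Omega}\bm{G}_{\mathrm{B}}\cdot\bm{\nu}\,d\sigma$ is nonzero in general; the invariance of $L_0^r(\Omega)$ that you invoke holds only for the \emph{full} problem, precisely because the bulk divergence term and the boundary flux are the normal trace of one and the same field. Splitting them apart destroys the only structure that produces exponential decay, so your claim that "projecting the right-hand side onto mean-zero \dots is automatic" fails for the residual, and with it the global-in-time weighted estimate.

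The paper's proof keeps the inhomogeneous problem intact: it first solves the whole system with a large artificial shift $\omega$ (Proposition \ref{maxneu}), verifies that this solution $U_\omega$ stays in $L_0^r(\Omega)$, and then removes the shift by the Duhamel-type correction $U_{\mathrm{N},0}=(\omega+\lambda)\int_0^te^{\lambda(t-\tau)}e^{(t-\tau)\Delta_{\mathrm{N},0}}U_\omega\,d\tau$. Its $L^q$ bound uses the decay of $e^{t\Delta_{\mathrm{N},0}}$ together with H\"older in time, which is where the restriction $\lambda<\lambda_{\mathrm{N}}(\Omega)/q$ actually originates (Proposition \ref{ipsipro}), not in any bookkeeping for the later nonlinear application as you speculate; and its membership in $\mathbb{E}_{q,r}^0$ is obtained by solving a second $\omega$-shifted problem with homogeneous boundary data and identifying its solution with $U_{\mathrm{N},0}$ through the uniqueness assertion of Proposition \ref{ipsipro}. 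Your proposal contains neither this identification step nor a workable substitute, so the estimate \eqref{nheatest} on $(0,\infty)$ is not established. A minor additional slip: by \eqref{lamdef}, $\lambda_{\mathrm{N}}(\Omega)$ is itself the first nonzero Neumann eigenvalue, so the spectral gap and the decay rate of $e^{t\Delta_{\mathrm{N},0}}$ are $\lambda_{\mathrm{N}}(\Omega)$, not $\lambda_{\mathrm{N}}(\Omega)^{-1}$.
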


Theorem \ref{maxneu0} is different from the standard result on the maximal regularity theorem for the linear heat equation with an inhomogeneous Neumann boundary condition (cf.~Pr\"uss and Simonett \cite{prusssimonett}*{Theorem 6.3.2}) since the left-hand side is given by $\partial_tU-\Delta U$ and since the solution to \eqref{nheat} decays exponentially as $t \to \infty$ by taking $\lambda>0$.
Indeed, recall that the Neumann--Laplacian defined on $L^r (\Omega)$ with the domain $W_{\nu}^{2,r} (\Omega)$ does \textit{not} admit 0 as its resolvent, but if one replaces $L^r (\Omega)$ with $L^r_0 (\Omega)$, then 0 becomes the resolvent of the Neumann--Laplacian.
Hence, one may expect to observe that the Neumann--Laplacian admits maximal regularity on the semi-infinite time interval $(0, \infty)$.
Although this fact seems to be standard, the proof of Theorem \ref{maxneu0} is partly hard to find in the common literature, and hence, we shall give its proof by following the approach due to Shibata \cite{shibata}*{Section 4} together with the general theory for maximal regularity \cite{prusssimonett}*{Theorem 6.3.2} in what follows.
Note that the crucial point here is that the domain of the Neumann--Laplacian is $(W_{\nu}^{2,r} \cap L_0^r)(\Omega)$ instead of $W_{\nu}^{2,r} (\Omega)$, which is completely different from the operator $-\Delta_{\mathrm{N},1}$ introduced in Section \ref{sec-1}.
Indeed, this replacement induces the \textit{invertibility} of the Neumann--Laplacian in $L^r_0 (\Omega)$. To this end, we first prepare the following proposition.

\begin{prop}\label{neumann0}
Let $1<r<\infty$ and define the operator $-\Delta_{\mathrm{N},0} \coloneqq -\Delta$ with the domain
$$D(-\Delta_{\mathrm{N},0}) \coloneqq (W_{\nu}^{2,r} \cap L_0^r)(\Omega)=\left\{\psi \in W^{2,r}(\Omega) \, \left| \, \nabla \psi \cdot \bm{\nu}=0 \text{ on $\partial\Omega$}, \quad \int_{\Omega}\psi(x)\,dx=0 \right.\right\}.$$
Then the following statements hold:
\begin{enumerate}
\item The resolvent set $\rho(\Delta_{\mathrm{N},0}) \subset \mathbb{C}$ of $\Delta_{\mathrm{N},0}$ satisfies
$$[0,\infty) \subset \mathbb{C} \setminus (-\infty,-\lambda_{\mathrm{N}}(\Omega)] \subset \rho(\Delta_{\mathrm{N},0}),$$
where $\lambda_{\mathrm{N}}(\Omega)>0$ is given by \eqref{lamdef}.

\item The domain $D(-\Delta_{\mathrm{N},0})$ is dense in $L_0^r(\Omega)$.
\end{enumerate}
\end{prop}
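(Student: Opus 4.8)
\textbf{Proof plan for Proposition \ref{neumann0}.}

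The plan is to establish the two assertions essentially by reducing the statements about $-\Delta_{\mathrm{N},0}$ on $L^r_0(\Omega)$ to known facts about the standard Neumann--Laplacian on $L^r(\Omega)$, and then using the Poincar\'e inequality from \eqref{lamdef} to pin down the location of the spectrum. For (i), the first step is to recall that the Neumann--Laplacian $\Delta_{\mathrm{N}}$ on $L^r(\Omega)$ with domain $W^{2,r}_\nu(\Omega)$ generates a bounded analytic $C_0$-semigroup, so its spectrum is contained in a sector, and that $0$ is a simple eigenvalue with eigenspace the constants (this is standard elliptic theory on bounded smooth domains). Since $L^r_0(\Omega)$ is the complementary subspace corresponding to the spectral projection associated to the eigenvalue $0$ (the projection $\psi \mapsto |\Omega|^{-1}\int_\Omega \psi\,dx$ onto constants has complementary range $L^r_0(\Omega)$), the space $L^r_0(\Omega)$ is invariant under the resolvent of $\Delta_{\mathrm N}$, and the part of $\Delta_{\mathrm N}$ in $L^r_0(\Omega)$ is exactly $\Delta_{\mathrm{N},0}$; hence $\rho(\Delta_{\mathrm{N},0}) \supset \rho(\Delta_{\mathrm N}) \cup \{0\}$. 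To get the sharper inclusion $\mathbb{C}\setminus(-\infty,-\lambda_{\mathrm N}(\Omega)] \subset \rho(\Delta_{\mathrm{N},0})$, I would argue as follows: for $\mu \in \mathbb{C}\setminus(-\infty,-\lambda_{\mathrm N}(\Omega)]$ and $f \in L^r_0(\Omega)$, one shows $\mu - \Delta_{\mathrm{N},0}$ is injective with closed range equal to $L^r_0(\Omega)$. The a priori bound comes from testing the equation $(\mu - \Delta)\psi = f$ with $\overline{\psi}$ (first in the case $r=2$, then transferring to general $r$ by the Fredholm/duality structure, since the operator already generates an analytic semigroup and the only possible obstruction is a real eigenvalue, which by elliptic regularity lies in $L^2$): integration by parts with $\nabla\psi\cdot\bm\nu = 0$ gives $\mu\|\psi\|_{L^2}^2 + \|\nabla\psi\|_{L^2}^2 = \int_\Omega f\overline\psi\,dx$, and combined with the Poincar\'e inequality $\|\psi\|_{L^2}^2 \le \lambda_{\mathrm N}(\Omega)\|\nabla\psi\|_{L^2}^2$ for $\psi \in L^2_0(\Omega)$, this forces $\psi = 0$ when $f = 0$ and $\mu \notin (-\infty,-\lambda_{\mathrm N}(\Omega)]$. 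Injectivity plus the fact that $-\Delta_{\mathrm{N},0}$ is already known to be sectorial (so has nonempty resolvent set and Fredholm index zero resolvent operators) yields $\mu \in \rho(\Delta_{\mathrm{N},0})$. Finally $[0,\infty) \subset \mathbb{C}\setminus(-\infty,-\lambda_{\mathrm N}(\Omega)]$ is trivial since $\lambda_{\mathrm N}(\Omega) > 0$.

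For (ii), the density of $D(-\Delta_{\mathrm{N},0}) = (W^{2,r}_\nu \cap L^r_0)(\Omega)$ in $L^r_0(\Omega)$: one knows $W^{2,r}_\nu(\Omega)$ is dense in $L^r(\Omega)$ (it is the domain of a densely defined generator, or one can see it directly since $C^\infty(\overline\Omega)$-functions with vanishing normal derivative are dense). Given $f \in L^r_0(\Omega)$, approximate it in $L^r(\Omega)$ by $\psi_k \in W^{2,r}_\nu(\Omega)$, then subtract the mean: $\widetilde\psi_k \coloneqq \psi_k - |\Omega|^{-1}\int_\Omega\psi_k\,dx$ still lies in $W^{2,r}_\nu(\Omega)$ (subtracting a constant does not affect the normal derivative), has zero mean, and since $\int_\Omega f\,dx = 0$ the means $|\Omega|^{-1}\int_\Omega\psi_k\,dx \to 0$; hence $\widetilde\psi_k \to f$ in $L^r(\Omega)$, giving density in $L^r_0(\Omega)$. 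Alternatively, and perhaps more cleanly, one invokes that $-\Delta_{\mathrm{N},0}$ is invertible (from (i), since $0 \in \rho(\Delta_{\mathrm{N},0})$), so its domain is the range of a bounded everywhere-defined operator on $L^r_0(\Omega)$, hence trivially dense — but the direct approximation argument is self-contained.

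The main obstacle is obtaining the spectral inclusion $\mathbb{C}\setminus(-\infty,-\lambda_{\mathrm N}(\Omega)] \subset \rho(\Delta_{\mathrm{N},0})$ with the \emph{sharp} constant $\lambda_{\mathrm N}(\Omega)$ rather than merely ``some" left half-plane: the $L^2$-energy estimate handles the Hilbert-space case immediately, but one must be careful justifying that the real spectrum of $\Delta_{\mathrm{N},0}$ on $L^r_0(\Omega)$ coincides with that on $L^2_0(\Omega)$ (i.e. $r$-independence of the spectrum of the Neumann--Laplacian on a bounded smooth domain). This $r$-independence is itself classical — it follows, for instance, from elliptic regularity (an $L^r$-eigenfunction for a real eigenvalue is smooth up to the boundary, hence an $L^2$-eigenfunction) — so the rigorous version of the proof will spell out this reduction; everything else is routine. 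I will also need to confirm that the part of the sectorial operator $\Delta_{\mathrm N}$ in the invariant subspace $L^r_0(\Omega)$ really is $\Delta_{\mathrm{N},0}$ with the stated domain, which is immediate from the definition of the spectral projection onto constants and the characterization $D(-\Delta_{\mathrm{N},0}) = W^{2,r}_\nu(\Omega) \cap L^r_0(\Omega)$.
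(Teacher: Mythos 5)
Your proposal is correct in its main line and its core coincides with the paper's proof: injectivity of $\lambda-\Delta_{\mathrm{N},0}$ is reduced to the case $r=2$ (via smoothness of eigenfunctions / $r$-independence) and settled by the energy identity combined with the Poincar\'e inequality with the sharp constant $\lambda_{\mathrm{N}}(\Omega)$, surjectivity then follows from a compactness--Fredholm argument, and density is obtained by subtracting the mean from an approximating sequence. Where you genuinely differ is the entry point. The paper produces a first resolvent point directly: it solves $(R-\Delta)\psi=\xi$ for one large $R$ in $W^{2,r}_{\nu}(\Omega)$ by Grisvard's solvability theorem and checks $\int_{\Omega}\psi\,dx=0$ via the divergence theorem (Proposition \ref{gaussdiv}), so that $R\in\rho(\Delta_{\mathrm{N},0})$ and $(R-\Delta_{\mathrm{N},0})^{-1}$ is compact by Rellich--Kondrachov; the Fredholm alternative applied to $(\lambda-\Delta_{\mathrm{N},0})(R-\Delta_{\mathrm{N},0})^{-1}=I+(\lambda-R)(R-\Delta_{\mathrm{N},0})^{-1}$ then upgrades injectivity to bijectivity. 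You instead pass through the spectral decomposition of the full Neumann--Laplacian on $L^{r}(\Omega)$ ($0$ a simple isolated eigenvalue, mean-value projection, $L^{r}_{0}(\Omega)$ the complementary invariant subspace). This works, but it imports as black boxes the isolation of $0$ in the $L^{r}$-spectrum and the identification of the spectral projection with the mean, which are exactly the kind of facts the proposition is meant to establish from scratch; the paper's route is more elementary and self-contained. Your parenthetical that ``the only possible obstruction is a real eigenvalue'' is also unnecessary and not justified as stated: the elliptic-regularity bootstrap applies to complex eigenvalues just as well, and your $L^{2}$ identity already disposes of complex $\mu$.

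Two points in your write-up need repair, though neither is fatal. First, the Fredholm step requires compactness of the resolvent, which comes from Rellich--Kondrachov on the bounded domain and not from sectoriality alone; ``Fredholm index zero resolvent operators'' should be replaced by the observation that $(\mu-\Delta_{\mathrm{N},0})(R-\Delta_{\mathrm{N},0})^{-1}$ is a compact perturbation of the identity. Second, your ``cleaner'' alternative for (ii) is invalid: $0\in\rho(\Delta_{\mathrm{N},0})$ only says that $D(-\Delta_{\mathrm{N},0})$ is the range of a bounded injective operator on $L^{r}_{0}(\Omega)$, and such a range need not be dense (closed operators with bounded inverse but non-dense domain exist). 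Keep the direct mean-subtraction approximation, which is what the paper does as well, using $C^{\infty}_{0}(\Omega)$ approximants so that the Neumann condition is automatic.
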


\begin{proof}
Although the assertion described in Proposition \ref{neumann0} seems to be standard, we will give a detailed proof for the convenience of the readers.

\noindent
\textbf{(i)} Let $\xi \in L_0^r(\Omega)$.
We aim to show that there exists a unique $\psi \in D(-\Delta_{\mathrm{N},0})$ such that $\lambda \psi-\Delta_{\mathrm{N},0}\psi=\xi$.
To this end, we first take $R \ge 1$ sufficiently large and apply \cite{grisvard}*{Theorem 2.4.1.3} to obtain a unique $\psi \in W_{\nu}^{2,r}(\Omega)$ such that $R\psi-\Delta \psi =\xi$.
Since $\xi \in L_0^r(\Omega)$ and $\psi \in W_{\nu}^{2,r}(\Omega)$, Proposition \ref{gaussdiv} yields $R\int_{\Omega}\psi(x)\,dx=0$.
Thus, we have $\psi \in D(-\Delta_{\mathrm{N},0})$ due to $R \neq 0$, so there holds $R \in \rho(\Delta_{\mathrm{N},0})$.

We next consider the case of $\lambda \in \mathbb{C} \setminus (-\infty,-\lambda_{\mathrm{N}}(\Omega)]$.
We show the injectivity of $\lambda-\Delta_{\mathrm{N},0}$:
Suppose that $\psi \in D(-\Delta_{\mathrm{N},0})$ satisfies $\lambda \psi-\Delta_{\mathrm{N},0}\psi=0$.
Since the spectrum of $\Delta_{\mathrm{N},0}$ is independent of $r$ due to the boundedness of $\Omega$, it suffices to consider the case of $r=2$.
Then, since $\nabla \psi \cdot \bm{\nu}=0$ on $\partial\Omega$, by multiplying $\psi$ and integrating over $\Omega$, we have
\begin{equation}
0=\lambda \int_{\Omega}|\psi(x)|^2\,dx-\int_{\Omega}(\psi\Delta_{\mathrm{N},0} \psi)(x)\,dx=\lambda \int_{\Omega}|\psi(x)|^2\,dx+\int_{\Omega}|\nabla \psi(x)|^2\,dx.
\end{equation}
Hence, there holds $\mathrm{Im} \, \lambda \|\psi\|_{L^2(\Omega)}^2=0$ and $\mathrm{Re} \, \lambda \|\psi\|_{L^2(\Omega)}^2+\|\nabla \psi\|_{L^2(\Omega)}^2=0$.
Here, noting that $\int_{\Omega}\psi(x)\,dx=0$, we see by the Poincar\'e inequality \cite{galdi}*{Theorem II 5.4} that
\begin{equation}
\begin{aligned}
0 &=\mathrm{Re} \, \lambda \|\psi\|_{L^2(\Omega)}^2+\|\nabla \psi\|_{L^2(\Omega)}^2 \\ 
& \ge \left(\mathrm{Re} \, \lambda +\inf_{\psi_* \in (W^{1,2} \cap L_0^2)(\Omega) \setminus \{0\}}\frac{\|\nabla\psi_*\|_{L^2(\Omega)}^2}{\|\psi_*\|_{L^2(\Omega)}^2}\right)\|\psi\|_{L^2(\Omega)}^2 \\
&=(\mathrm{Re} \, \lambda + \lambda_{\mathrm{N}}(\Omega))\|\psi\|_{L^2(\Omega)}^2,
\end{aligned}
\end{equation}
which yields $\psi=0$ due to $\mathrm{Im} \, \lambda \neq 0$ or $\mathrm{Re} \, \lambda>-\lambda_{\mathrm{N}}(\Omega)$.

Finally, we show the surjectivity of $\lambda-\Delta_{\mathrm{N},0}$:
Since we have observed that $R \in \rho(\Delta_{\mathrm{N},0})$, the operator $\mathcal{L}_R \coloneqq (R-\Delta_{\mathrm{N},0})^{-1}:L_0^r(\Omega) \to D(-\Delta_{\mathrm{N},0}) \subset L_0^r(\Omega)$ is bounded.
Note that since $\Omega$ is bounded and since $D(-\Delta_{\mathrm{N},0}) \subset W^{1,r}(\Omega)$, the Rellich--Kondrachov theorem \cite{brezis}*{Theorem 9.16} ensures that $\mathcal{L}_R$ is a compact operator.
In addition, we have shown that $\lambda-\Delta_{\mathrm{N},0}$ is injective for $\lambda \in \mathbb{C} \setminus (-\infty,-\lambda_{\mathrm{N}}(\Omega)]$, so $(\lambda-\Delta_{\mathrm{N},0})\mathcal{L}_R$ is injective as well.
Therefore, by the Fredholm alternative theorem \cite{brezis}*{Theorem 6.6 (c)} and the identity $(\lambda-\Delta_{\mathrm{N},0})\mathcal{L}_R=1+(\lambda-R)\mathcal{L}_R$, we see that given $\xi \in L_0^r(\Omega)$, we may find $\widetilde{\psi} \in L_0^r(\Omega)$ so that $(\lambda-\Delta_{\mathrm{N},0})\mathcal{L}_R\widetilde{\psi}=\xi$.
Hence, we have $\psi \coloneqq \mathcal{L}_R\widetilde{\psi} \in D(-\Delta_{\mathrm{N},0})$ and $\lambda \psi-\Delta_{\mathrm{N},0}\psi=\xi$, which yields $\mathbb{C} \setminus (-\infty,-\lambda_{\mathrm{N}}(\Omega)] \subset \rho(\Delta_{\mathrm{N},0})$.

\noindent
\textbf{(ii)} Suppose that $\psi \in L_0^r(\Omega)$.
Then we may take a sequence $\{\psi_j\}_{j \in \mathbb{N}} \subset C_0^{\infty}(\Omega)$ of functions satisfying $\|\psi-\psi_j\|_{L^r(\Omega)} \to 0$ as $j \to \infty$.
By setting
$$\widetilde{\psi}_j \coloneqq \psi_j-\frac{1}{|\Omega|}\int_{\Omega}\psi_j(x)\,dx, \quad j \in \mathbb{N},$$
we have $\{\widetilde{\psi}_j\}_{j \in \mathbb{N}} \subset D(-\Delta_{\mathrm{N},0})$.
Noting that $\int_{\Omega}\psi(x)\,dx=0$, we see by the H\"older inequality that
\begin{equation}
\|\widetilde{\psi}_j-\psi\|_{L^r(\Omega)} \le \|\psi_j-\psi\|_{L^r(\Omega)}+\frac{1}{|\Omega|}\int_{\Omega}|\psi_j(x)-\psi(x)|\,dx \cdot \|1\|_{L^r(\Omega)} \le 2\|\psi_j-\psi\|_{L^r(\Omega)},
\end{equation}
which yields the desired result.
\end{proof}

By virtue of Proposition \ref{neumann0}, we see that the Neumann--Laplacian $-\Delta_{\mathrm{N},0}:D(-\Delta_{\mathrm{N},0}) \to L_0^r(\Omega)$ generates a bounded analytic $C_0$-semigroup $e^{t\Delta_{\mathrm{N},0}}: L_0^r(\Omega) \to L_0^r(\Omega)$ such that
\begin{equation}\label{semiest}
\|e^{t\Delta_{\mathrm{N},0}}\psi\|_{L^r(\Omega)} \le Ce^{-\lambda_{\mathrm{N}}(\Omega)t}\|\psi\|_{L^r(\Omega)}
\end{equation}
for all $0<t<\infty$ and $\psi \in L_0^r(\Omega)$, where $C>0$ is a constant independent of $t$ and $\psi$. In fact, since Proposition \ref{neumann0} implies that $(-\infty,0) \subset \rho(-\lambda_{\mathrm{N}}(\Omega)-\Delta_{\mathrm{N},0})$, the well-known semigroup theory (see \cite{prusssimonett}*{Theorem 3.3.2}) ensures that $e^{(\lambda_{\mathrm{N}}(\Omega)+\Delta_{\mathrm{N},0})t}$ is uniformly bounded, which yields \eqref{semiest}.
It should be noted that although this consequence \eqref{semiest} has already been given by Winkler \cite{Winkler10}*{Lemma 1.3 (i)}, another approach by Proposition \ref{neumann0} is taken here so that the resolvent set $\rho(\Delta_{\mathrm{N},0})$ is characterized in detail.
Using $e^{t\Delta_{\mathrm{N},0}}$, we next show the following proposition
by simple calculations.
Note that the uniqueness assertion will play an important role in the proof of Theorem \ref{maxneu0} since it provides a \textit{gain} of regularities of the function $F$.
In addition, noting that a similar estimate to \eqref{semiest} is valid for the Stokes operator $e^{-tA}$, we give the corresponding result in the case of $e^{-tA}$ as well.

\begin{prop}\label{ipsipro}
Let $r,q \in (1,\infty)$ and let $\lambda_{\mathrm{N}}(\Omega),\lambda_{\mathrm{D}}(\Omega) \in (0,\infty)$ be given by \eqref{lamdef}.
Then the following statements hold:

\begin{enumerate}
\item Let $0<\lambda_*<\lambda_{\mathrm{N}}(\Omega)/q$ and $0 \le \lambda \le \lambda_*$.
For $F \in L^q(0,\infty;L_0^r(\Omega))$, define $\mathcal{I}F$ by setting
\begin{equation}
(\mathcal{I}F)(t,x) \coloneqq \int_0^t e^{\lambda (t-\tau)}e^{(t-\tau)\Delta_{\mathrm{N},0}}F(\tau,x) \, d\tau, \quad (t,x) \in (0,\infty) \times \Omega.
\end{equation}
Then it holds that $\mathcal{I}F \in L^q(0,\infty;L_0^r(\Omega))$ with the estimate
\begin{equation}\label{ipsiest}
\|\mathcal{I}F\|_{L^q(L^r(\Omega))} \le C_{\lambda_*}\|F\|_{L^q(L^r(\Omega))},
\end{equation}
where $C_{\lambda_*}>0$ is a constant independent of $\lambda$ and $F$.
Moreover, if there is a constant $\alpha \in \mathbb{R} \setminus \{0\}$ such that $\mathcal{I}F=\alpha F$ in $(0,\infty) \times \Omega$, then it holds that $F \equiv 0$.

\item Let $0<\lambda_*<\lambda_{\mathrm{D}}(\Omega)/q$ and $0 \le \lambda \le \lambda_*$.
For $F \in L^q(0,\infty;L_{\sigma}^r(\Omega))$, define $\mathcal{J}F$ by setting
\begin{equation}
(\mathcal{J}F)(t,x) \coloneqq \int_0^t e^{\lambda (t-\tau)}e^{-(t-\tau)A}F(\tau,x) \, d\tau, \quad (t,x) \in (0,\infty) \times \Omega.
\end{equation}
Then it holds that $\mathcal{J}F \in L^q(0,\infty;L_{\sigma}^r(\Omega))$ with the estimate
\begin{equation}
\|\mathcal{J}F\|_{L^q(L^r(\Omega))} \le C_{\lambda_*}\|F\|_{L^q(L^r(\Omega))}.
\end{equation}
Moreover, if there is a constant $\alpha \in \mathbb{R} \setminus \{0\}$ such that $\mathcal{J}F=\alpha F$ in $(0,\infty) \times \Omega$, then it holds that $F \equiv 0$.
\end{enumerate}
\end{prop}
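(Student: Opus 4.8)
The plan is to prove statement (i) in full and to observe that statement (ii) follows by the same argument, with $e^{t\Delta_{\mathrm{N},0}}$, $\lambda_{\mathrm{N}}(\Omega)$, and $L_0^r(\Omega)$ replaced throughout by $e^{-tA}$, $\lambda_{\mathrm{D}}(\Omega)$, and $L_{\sigma}^r(\Omega)$; the only external input needed is the decay bound $\|e^{-tA}\psi\|_{L^r(\Omega)} \le Ce^{-\lambda_{\mathrm{D}}(\Omega)t}\|\psi\|_{L^r(\Omega)}$, which follows from the invertibility of the Stokes operator on $L_{\sigma}^r(\Omega)$ exactly as \eqref{semiest} follows from Proposition \ref{neumann0}.

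\emph{Step 1 (the bound \eqref{ipsiest}).} First I would note that $e^{t\Delta_{\mathrm{N},0}}$ maps the closed subspace $L_0^r(\Omega)$ into itself, so that the Bochner integral defining $(\mathcal{I}F)(t,\,\cdot\,)$ stays in $L_0^r(\Omega)$. For the norm, inserting \eqref{semiest} under the integral sign gives
\begin{equation}
\|(\mathcal{I}F)(t,\,\cdot\,)\|_{L^r(\Omega)} \le C\int_0^t e^{-(\lambda_{\mathrm{N}}(\Omega)-\lambda)(t-\tau)}\|F(\tau,\,\cdot\,)\|_{L^r(\Omega)}\,d\tau,
\end{equation}
that is, $t \mapsto \|(\mathcal{I}F)(t,\,\cdot\,)\|_{L^r(\Omega)}$ is dominated by the convolution of the kernel $s \mapsto Ce^{-(\lambda_{\mathrm{N}}(\Omega)-\lambda)s}$, $s>0$, with $t \mapsto \|F(t,\,\cdot\,)\|_{L^r(\Omega)}$. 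Since $\lambda \le \lambda_* < \lambda_{\mathrm{N}}(\Omega)/q \le \lambda_{\mathrm{N}}(\Omega)$, that kernel lies in $L^1(0,\infty)$ with $L^1$-norm $C(\lambda_{\mathrm{N}}(\Omega)-\lambda)^{-1} \le C(\lambda_{\mathrm{N}}(\Omega)-\lambda_*)^{-1}$, so Young's convolution inequality yields \eqref{ipsiest} with $C_{\lambda_*} = C(\lambda_{\mathrm{N}}(\Omega)-\lambda_*)^{-1}$, which is independent of $\lambda \in [0,\lambda_*]$ and of $F$. Estimating the same integral by H\"older's inequality in $\tau$ instead shows moreover that $\mathcal{I}F \in L^{\infty}(0,\infty;L^r(\Omega))$, a fact used in Step 2.

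\emph{Step 2 (rigidity).} Suppose $\mathcal{I}F = \alpha F$ on $(0,\infty)\times\Omega$ with $\alpha \in \mathbb{R}\setminus\{0\}$, and set $v \coloneqq \mathcal{I}F = \alpha F$. Substituting $F = \alpha^{-1}v$ into the definition of $\mathcal{I}$ recasts the hypothesis as the Volterra identity
\begin{equation}
v(t,\,\cdot\,) = \frac{1}{\alpha}\int_0^t e^{\lambda(t-\tau)}e^{(t-\tau)\Delta_{\mathrm{N},0}}v(\tau,\,\cdot\,)\,d\tau, \quad t>0.
\end{equation}
Taking $L^r(\Omega)$-norms, applying \eqref{semiest}, and using $\lambda < \lambda_{\mathrm{N}}(\Omega)$ to bound $e^{(\lambda-\lambda_{\mathrm{N}}(\Omega))(t-\tau)} \le 1$, one obtains
\begin{equation}
\|v(t,\,\cdot\,)\|_{L^r(\Omega)} \le \frac{C}{|\alpha|}\int_0^t \|v(\tau,\,\cdot\,)\|_{L^r(\Omega)}\,d\tau, \quad t>0.
\end{equation}
Since $t \mapsto \|v(t,\,\cdot\,)\|_{L^r(\Omega)}$ is bounded by Step 1, Gr\"onwall's inequality forces $v \equiv 0$, hence $F = \alpha^{-1}v \equiv 0$.

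I expect Step 2 to be the only genuinely subtle point. The tempting route — differentiating $\mathcal{I}F = \alpha F$ in $t$, so that $F$ would satisfy $\partial_t F = (\lambda + \alpha^{-1} + \Delta_{\mathrm{N},0})F$ with zero initial trace and one could appeal to uniqueness for the shifted Neumann heat semigroup — is not immediately legitimate, since a priori $F$ lies only in $L^q(0,\infty;L_0^r(\Omega))$ and carries no time derivative, so one would first have to bootstrap $F$ into the maximal regularity class. Keeping the integral (Volterra) formulation and closing by Gr\"onwall bypasses this entirely and needs nothing beyond \eqref{semiest} and the boundedness from Step 1. A minor but real point in Step 1 is that the constant in \eqref{ipsiest} must not degenerate as $\lambda \uparrow \lambda_*$, which is precisely why the $L^1$-norm of the convolution kernel is controlled by $C(\lambda_{\mathrm{N}}(\Omega)-\lambda_*)^{-1}$ uniformly in $\lambda \in [0,\lambda_*]$.
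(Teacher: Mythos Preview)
Your proof is correct but follows a genuinely different route from the paper's. For the bound \eqref{ipsiest}, the paper splits the integrand via a weighted H\"older inequality and then integrates in $t$ over intervals $(T,T+h)$ using Fubini, arriving at the localized estimate
\[
\|\mathcal{I}F\|_{L^q(T,T+h;L^r(\Omega))} \le C_{\lambda_*}\bigl(1-e^{-(\lambda_{\mathrm{N}}(\Omega)-\lambda q)h}\bigr)^{1/q}\|F\|_{L^q(0,T+h;L^r(\Omega))},
\]
which is where the hypothesis $\lambda q<\lambda_{\mathrm{N}}(\Omega)$ genuinely enters. You instead recognize $\|(\mathcal{I}F)(t,\cdot)\|_{L^r}$ as dominated by a convolution with an $L^1$ kernel and invoke Young's inequality; this is shorter and in fact only needs $\lambda<\lambda_{\mathrm{N}}(\Omega)$. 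For the rigidity, the paper exploits the smallness of the factor $(1-e^{-(\lambda_{\mathrm{N}}(\Omega)-\lambda q)h})^{1/q}$ for $h$ small to run a time-marching (absorption) argument on successive intervals $(0,h),(h,2h),\ldots$, while you pass to the pointwise inequality $\|v(t,\cdot)\|_{L^r}\le (C/|\alpha|)\int_0^t\|v(\tau,\cdot)\|_{L^r}\,d\tau$ and close by Gr\"onwall, using the $L^\infty$-in-time bound from Step~1 to ensure local integrability. Both routes are valid; yours is more elementary and avoids the $1/q$ loss in the exponent, whereas the paper's localized estimate is sharper as an inequality in its own right, though that extra precision is not used elsewhere.
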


\begin{proof}
\textbf{(i)} It follows from \eqref{semiest} and the H\"older inequality that
\begin{align}
\|(\mathcal{I}F)(t,\,\cdot\,)\|_{L^r(\Omega)} &\le C\int_0^t e^{\lambda (t-\tau)}e^{-\lambda_{\mathrm{N}}(\Omega)(t-\tau)}\|F(\tau,\,\cdot\,)\|_{L^r(\Omega)} \, d\tau \\
&\le C\bigg(\int_0^t e^{-\lambda_{\mathrm{N}}(\Omega)(t-\tau)} \, d\tau \bigg)^{1/q'}\bigg(\int_0^t e^{\lambda q(t-\tau)}e^{-\lambda_{\mathrm{N}}(\Omega)(t-\tau)}\|F(\tau,\,\cdot\,)\|_{L^r(\Omega)}^q \, d\tau \bigg)^{1/q} \\
&\le C(\lambda_{\mathrm{N}}(\Omega))^{-1/q'}\bigg(\int_0^t e^{-(\lambda_{\mathrm{N}}(\Omega)-\lambda q)(t-\tau)}\|F(\tau,\,\cdot\,)\|_{L^r(\Omega)}^q \, d\tau \bigg)^{1/q}
\end{align}
due to $1/q+1/q'=1$.
Since $0 \le \lambda<\lambda_{\mathrm{N}}(\Omega)/q$, together with the aforementioned inequality and the Fubini theorem, we obtain
\begin{align}
\int_T^{T+h}\|(\mathcal{I}F)(t,\,\cdot\,) \|_{L^r(\Omega)}^q \, dt &\le C\int_T^{T+h}\int_0^t e^{- (\lambda_{\mathrm{N}}(\Omega)-\lambda q)(t-\tau)}\|F(\tau,\,\cdot\,)\|_{L^r(\Omega)}^q \, d\tau \, dt \\
&= C\int_0^T \bigg(\int_T^{T+h} e^{-(\lambda_{\mathrm{N}}(\Omega)-\lambda q)(t-\tau)} dt \bigg)\|F(\tau,\,\cdot\,)\|_{L^r(\Omega)}^q \, d\tau \\
&\quad + C\int_T^{T+h} \bigg(\int_\tau^{T+h} e^{-(\lambda_{\mathrm{N}}(\Omega)-\lambda q)(t-\tau)} dt \bigg)\|F(\tau,\,\cdot\,)\|_{L^r(\Omega)}^q \, d\tau \\
&\le \frac{C}{\lambda_{\mathrm{N}}(\Omega)-\lambda q}(1-e^{-(\lambda_{\mathrm{N}}(\Omega)-\lambda q)h}) \int_0^{T+h} \|F(\tau,\,\cdot\,)\|_{L^r(\Omega)}^q \, d\tau
\end{align}
for all $T,h \in (0,\infty)$, which yields
$$\|\mathcal{I}F\|_{L^q(T,T+h;L^r(\Omega))} \le C_{\lambda_*}(1-e^{-(\lambda_{\mathrm{N}}(\Omega)-\lambda q)h})^{1/q}\|F\|_{L^q(0,T+h;L^r(\Omega))}.$$
Thus, we have \eqref{ipsiest} by letting $T \to +0$ and $h \to \infty$.
In addition, assume that $\mathcal{I}F=\alpha F$.
Then, by taking $h$ sufficiently small so that $C(1-e^{-(\lambda_{\mathrm{N}}(\Omega)-\lambda q)h})^{1/q} \le |\alpha|/2$, we obtain
$$|\alpha|\|F\|_{L^q(T,T+h;L^r(\Omega))} \le \frac{|\alpha|}{2}\|F\|_{L^q(0,T+h;L^r(\Omega))}.$$
Hence, taking $T=0$ implies that $F=0$ holds in $(0,h) \times \Omega$.
Next we take $T=h$.
Then it holds that
$$|\alpha|\|F\|_{L^q(h,2h;L^r(\Omega))} \le \frac{|\alpha|}{2}\|F\|_{L^q(0,2h;L^r(\Omega))}= \frac{|\alpha|}{2}\|F\|_{L^q(h,2h;L^r(\Omega))},$$
which yields $F=0$ in $(0,2h) \times \Omega$.
By repeating this argument, we have the desired result.

\noindent
\textbf{(ii)} Noting that \cite{amann}*{Remark 3.1} yields
$$\|e^{-tA}\bm{\psi}\|_{L^r(\Omega)} \le Ce^{-\lambda_{\mathrm{D}}(\Omega)t}\|\bm{\psi}\|_{L^r(\Omega)}$$
for all $0<t<\infty$ and $\bm{\psi} \in L_{\sigma}^r(\Omega)$, we simply follow the proof of the assertion of (i) with \eqref{semiest} replaced by the above estimate to conclude the desired result.
\end{proof}

Concerning the boundary condition of System \eqref{nheat} in Theorem \ref{maxneu0}, we shall introduce the mapping property of the trace operator $\gamma$.
To describe the result, the notations of the fractional Sobolev spaces $W^{\theta,r}(\partial\Omega)$ defined on the boundary $\partial\Omega$ and the $X$-valued Triebel--Lizorkin spaces $F_{q,r}^s(0,\infty;X)$ are used (cf.~\cite{grisvard}*{Definition 1.3.3.2}, \cite{galdi}*{Section II.4}, and \cite{hytonen}*{Section 14.6}), but we omit the precise definitions here; these function spaces appear only in the following claim.

\begin{prop}\label{maxtrace}
Let $r,q \in (1,\infty)$ and $\bm{F} \in L^q(0,\infty;W^{1,r}(\Omega)^N) \cap W^{1,q}(0,\infty;L^r(\Omega)^N)$.
Then it holds that
$$(\gamma \bm{F}) \cdot \bm{\nu} \in L^q(0,\infty;W^{1-1/r,r}(\partial\Omega)) \cap F_{q,r}^{(1/2)(1-1/r)}(0,\infty;L^r(\partial\Omega))$$
with the estimate
$$\|(\gamma\bm{F}) \cdot \bm{\nu}\|_{L^q(W^{1-1/r,r}(\partial\Omega)) \cap F_{q,r}^{(1/2)(1-1/r)}(L^r(\partial\Omega))} \le C\|\bm{F}\|_{L^q(W^{1,r}(\Omega)) \cap W^{1,q}(L^r(\Omega))},$$
where $\bm{\nu}$ denotes a unit outer normal to $\partial\Omega$ and $C>0$ is a constant independent of $\bm{F}$.
\end{prop}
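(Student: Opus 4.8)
The statement is an anisotropic (parabolic-type) trace theorem: the class $L^q(0,\infty;W^{1,r}(\Omega)^N)\cap W^{1,q}(0,\infty;L^r(\Omega)^N)$ is exactly the Neumann data class attached to the maximal regularity space $\mathbb{E}_{q,r}^0(-\Delta_{\mathrm N})$, and its spatial trace should carry smoothness $1-1/r$ in the $x$-variables and, by the scaling in which $\partial_t$ counts as two spatial derivatives, smoothness $\tfrac12(1-1/r)$ in time. The plan is to split the claimed norm into its two pieces, treat the spatial piece by the classical trace theorem pointwise in $t$, and reduce the temporal piece to a model problem on a half-space where the sharp anisotropic trace theorem applies. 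For the spatial piece: for a.e.\ $t$ one has $\bm F(t,\cdot)\in W^{1,r}(\Omega)^N$, so \cite{grisvard}*{Theorem 1.5.1.2} gives $\gamma\bm F(t,\cdot)\in W^{1-1/r,r}(\partial\Omega)^N$ with $\|\gamma\bm F(t,\cdot)\|_{W^{1-1/r,r}(\partial\Omega)}\le C\|\bm F(t,\cdot)\|_{W^{1,r}(\Omega)}$; taking the $L^q$-norm in $t$ yields $\gamma\bm F\in L^q(0,\infty;W^{1-1/r,r}(\partial\Omega)^N)$, and since $\bm\nu\in C^\infty(\partial\Omega)$ is a pointwise multiplier on $W^{1-1/r,r}(\partial\Omega)$, the same bound holds for $(\gamma\bm F)\cdot\bm\nu$. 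This produces the first half of the target norm.

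For the temporal piece I would localize: fix a finite atlas of $\partial\Omega$, a subordinate smooth partition of unity, and for each chart a smooth diffeomorphism flattening the boundary onto a portion of $\{x_N=0\}$. Since $\partial\Omega$ is smooth and compact, these diffeomorphisms and the associated changes of variables map $L^q(0,\infty;W^{1,r}(\Omega)^N)\cap W^{1,q}(0,\infty;L^r(\Omega)^N)$ boundedly and invertibly onto the corresponding space on $\mathbb{R}^N_+$ (the extra lower-order terms from the chain rule remain in this class), they do not touch the time variable, and on the trace side they act on $L^r(\partial\Omega)$ pointwise in $t$ as bounded invertible operators, hence induce bounded maps on $F_{q,r}^{(1/2)(1-1/r)}(0,\infty;L^r(\partial\Omega))$; multiplication by the partition of unity is likewise bounded on both pieces of the target. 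Combining this with Step~1 and with an even reflection from $(0,\infty)$ to $\mathbb{R}$ in time (bounded on $W^{1,q}$ and on the intersection space, and compatible with the subsequent restriction on the trace side), it suffices to prove for scalar $g$ that the trace $g\mapsto g(\cdot,\cdot,0)$ maps $L^q(\mathbb{R};W^{1,r}(\mathbb{R}^N_+))\cap W^{1,q}(\mathbb{R};L^r(\mathbb{R}^N_+))$ boundedly into $F_{q,r}^{(1/2)(1-1/r)}(\mathbb{R};L^r(\mathbb{R}^{N-1}))$.

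This last model statement is the place where the genuine work lies, and I would invoke the trace theorem for vector-valued anisotropic function spaces \cite{hytonen}*{Section 14.6}. The point is that $L^q(\mathbb{R};W^{1,r}(\mathbb{R}^N_+))\cap W^{1,q}(\mathbb{R};L^r(\mathbb{R}^N_+))$ is an anisotropic mixed-norm space in which a time derivative has twice the weight of a spatial one; its trace onto $\{x_N=0\}$ loses $1/r$ spatial derivatives, i.e.\ $\tfrac12\cdot\tfrac1r$ in the time scale, and the \emph{sharp} trace space is the mixed Triebel--Lizorkin space $F_{q,r}^{(1/2)(1-1/r)}(\mathbb{R};L^r(\mathbb{R}^{N-1}))\cap L^q(\mathbb{R};W^{1-1/r,r}(\mathbb{R}^{N-1}))$, with microscopic parameter equal to the \emph{spatial} integrability exponent $r$, not to $q$. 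This refinement is precisely what cannot be obtained from the soft embedding $L^q(W^{1,r})\cap W^{1,q}(L^r)\hookrightarrow H^{\theta,q}(0,\infty;H^{1-\theta,r}(\Omega))$ and the ordinary trace theorem, which would only give a Besov/Bessel-potential scale in $q$; it requires a Littlewood--Paley argument together with an explicit extension operator (a tensorized Poisson/heat-type kernel) serving as a bounded right inverse. Identifying this $F_{q,r}$-scale on the half-line and transferring it back through the charts is the main obstacle; once it is available, pulling the model estimate back via Step~2 and combining with Step~1 completes the proof, everything else (the pointwise-in-$t$ classical trace, the partition of unity, the behaviour under smooth diffeomorphisms and under multiplication by $\bm\nu$) being routine.
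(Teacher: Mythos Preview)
Your approach is correct but substantially more laborious than the paper's. The paper condenses the whole argument to three lines by two devices you did not use. First, instead of taking the vector-valued trace $\gamma\bm F$ and then multiplying by $\bm\nu$ on $\partial\Omega$, the paper extends $\bm\nu$ to a function $\widetilde{\bm\nu}\in C^1(\overline{\Omega})^N$ (via \cite{grisvard}*{Theorem 1.5.1.2}) and forms the \emph{scalar} $\bm F\cdot\widetilde{\bm\nu}\in L^q(0,\infty;W^{1,r}(\Omega))\cap W^{1,q}(0,\infty;L^r(\Omega))$; then $(\gamma\bm F)\cdot\bm\nu=\gamma(\bm F\cdot\widetilde{\bm\nu})$, so the problem reduces to tracing a single scalar function. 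Second, rather than localizing, flattening, reflecting in time and invoking a half-space model from \cite{hytonen}, the paper applies Lindemulder's trace theorem \cite{lindemulder}*{Theorem 4.4} directly on the domain $\Omega$, which already contains the localization machinery. Your route reproves, in outline, what that reference packages.

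A minor point: the paper actually obtains the \emph{stronger} temporal regularity $F_{q,r}^{1-1/r}(0,\infty;L^r(\partial\Omega))$ from Lindemulder and then embeds into $F_{q,r}^{(1/2)(1-1/r)}$. Your heuristic ``time counts as two spatial derivatives'' is the parabolic scaling appropriate to $L^q(W^{2,r})\cap W^{1,q}(L^r)$, not to the isotropic-weight space $L^q(W^{1,r})\cap W^{1,q}(L^r)$ at hand; this is why you land on the weaker exponent $(1/2)(1-1/r)$. The conclusion is still valid since it is weaker, but the reasoning leading to that exponent is not the sharp one.
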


\begin{proof}
Since the boundary $\partial\Omega$ is smooth, we may take $\widetilde{\bm{\nu}} \in C^1(\overline{\Omega})^N$ so that $\gamma\widetilde{\bm{\nu}}=\bm{\nu}$ from \cite{grisvard}*{Theorem 1.5.1.2}.
Thus we observe that
$$\bm{F} \cdot \widetilde{\bm{\nu}} \in L^q(0,\infty;W^{1,r}(\Omega)) \cap W^{1,q}(0,\infty;L^r(\Omega)).$$
Therefore, it holds by the result of Lindemulder \cite{lindemulder}*{Theorem 4.4} that
\begin{equation}
\begin{aligned}
(\gamma \bm{F}) \cdot \bm{\nu}=\gamma(\bm{F}\cdot\widetilde{\bm{\nu}}) &\in L^q(0,\infty;W^{1-1/r,r}(\partial\Omega)) \cap F_{q,r}^{1-1/r}(0,\infty;L^r(\partial\Omega)) \\
&\subset L^q(0,\infty;W^{1-1/r,r}(\partial\Omega)) \cap F_{q,r}^{(1/2)(1-1/r)}(0,\infty;L^r(\partial\Omega)).
\end{aligned}
\end{equation}
This proves Proposition \ref{maxtrace}.
\end{proof}

Finally, before showing Theorem \ref{maxneu0}, we verify the following proposition, which is an immediate consequence of the maximal regularity theorem given by Pr\"uss and Simonett \cite{prusssimonett}*{Theorem 6.3.2}:

\begin{prop}\label{maxneu}
Let $r,q \in (1,\infty)$ satisfy $1/r+2/q \neq 1$.
Assume that $F_0$, $F_{\mathrm{E}}$, and $\bm{F}_{\mathrm{B}}$ satisfy
\begin{gather}
F_0 \in B_{r,q}^{2-2/q}(\Omega), \quad F_{\mathrm{E}} \in L^q(0,\infty;L^r(\Omega)), \\
\bm{F}_{\mathrm{B}} \in  L^q(0,\infty;W^{1,r}(\Omega)^N) \cap W^{1,q}(0,\infty;L^r(\Omega)^N).
\end{gather}
Furthermore, if $1/r+2/q<1$, suppose that $F_0$ and $\bm{F}_{\mathrm{B}}$ satisfy the additional condition $\nabla F_0 \cdot \bm{\nu}=\bm{F}_{\mathrm{B}}(0,\,\cdot\,) \cdot \bm{\nu}$ on $\partial\Omega$.
Then there exists a constant $\omega_0>0$ independent of $F_0$, $F_{\mathrm{E}}$, and $\bm{F}_{\mathrm{B}}$ such that for every $\omega \ge \omega_0$, there exists a unique global strong solution $U_{\omega} \in \mathbb{E}_{q,r}^0$ to the equations
\begin{equation}
\left\{\begin{aligned}
\partial_tU_{\omega}+\omega U_{\omega}-\Delta U_{\omega} &=-\nabla \cdot \bm{F}_{\mathrm{B}}+F_{\mathrm{E}}, & t>0, \, x &\in \Omega, \\
\nabla U_{\omega} \cdot \bm{\nu} &=\bm{F}_{\mathrm{B}} \cdot \bm{\nu}, & t>0, \, x &\in \partial\Omega, \\
U_{\omega} (0,x) &= F_0(x), & x &\in \Omega.
\end{aligned}\right.
\end{equation}
The boundary condition is satisfied in the sense that \eqref{VBC}. Moreover, it holds that
$$\|U_{\omega}\|_{\mathbb{E}_{q,r}^0} \le C\left(\|F_0\|_{B_{r,q}^{2-2/q}(\Omega)}+\|F_{\mathrm{E}}\|_{L^q(L^r(\Omega))}+\|\bm{F}_{\mathrm{B}}\|_{L^q(W^{1,r}(\Omega)) \cap W^{1,q}(L^r(\Omega))}\right),$$
where $C>0$ is a constant independent of $F_0$, $F_{\mathrm{E}}$, $\bm{F}_{\mathrm{B}}$, and $U_{\omega}$.
\end{prop}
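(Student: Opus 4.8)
The plan is to exhibit Proposition~\ref{maxneu} as a special case of the abstract maximal $L^q$-regularity theorem for parabolic boundary value problems of Pr\"uss and Simonett \cite{prusssimonett}*{Theorem 6.3.2}, so that all the work reduces to (i) checking that the present data fall into the hypotheses of that theorem, (ii) fixing the shift $\omega_0$ to run the theorem on the half-line, and (iii) translating the abstract solution into the pointwise boundary identity \eqref{VBC}. No genuinely new analytic ingredient is needed beyond the trace estimate already recorded in Proposition~\ref{maxtrace}.

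First I would verify that the data fit. Since $\bm{F}_{\mathrm{B}} \in L^q(0,\infty;W^{1,r}(\Omega)^N)$ we have $\nabla \cdot \bm{F}_{\mathrm{B}} \in L^q(0,\infty;L^r(\Omega))$, so the inhomogeneity $-\nabla \cdot \bm{F}_{\mathrm{B}} + F_{\mathrm{E}}$ lies in $L^q(0,\infty;L^r(\Omega))$, which is exactly the ground-space regularity matching a solution in $\mathbb{E}_{q,r}^0 = L^q(0,\infty;W^{2,r}(\Omega)) \cap W^{1,q}(0,\infty;L^r(\Omega))$. By Proposition~\ref{maxtrace}, the Neumann datum $g \coloneqq (\gamma\bm{F}_{\mathrm{B}}) \cdot \bm{\nu}$ belongs to
$$L^q(0,\infty;W^{1-1/r,r}(\partial\Omega)) \cap F_{q,r}^{(1/2)(1-1/r)}(0,\infty;L^r(\partial\Omega)),$$
and a computation of the trace exponents for a second-order parabolic operator with a first-order boundary operator $\partial_{\bm{\nu}}$ shows that this is precisely the optimal boundary-data class prescribed by \cite{prusssimonett}*{Theorem 6.3.2}, the boundary time-regularity index being $\kappa = \tfrac12(1-\tfrac1r)$ and the spatial index $2\kappa = 1-\tfrac1r$. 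The initial datum $F_0 \in B_{r,q}^{2-2/q}(\Omega)$ is the natural time-trace space of $\mathbb{E}_{q,r}^0$ (Proposition~\ref{traceemb}), and the compatibility condition required by the theorem, namely $\partial_{\bm{\nu}} F_0 = g|_{t=0}$, reads $\nabla F_0 \cdot \bm{\nu} = \bm{F}_{\mathrm{B}}(0,\,\cdot\,)\cdot\bm{\nu}$ on $\partial\Omega$; this is imposed exactly when $\kappa > 1/q$, i.e.~when $1/r + 2/q < 1$, which is precisely the regime in which we assumed it, while for $1/r + 2/q > 1$ the time-trace of $g$ does not exist and no compatibility is needed (the borderline $1/r+2/q=1$ being excluded by hypothesis).

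It then remains to verify the structural hypotheses and invoke the theorem. The principal symbol $-\Delta$ is normally elliptic and the pair $(-\Delta,\partial_{\bm{\nu}})$ satisfies the Lopatinskii--Shapiro condition (classical for the Neumann problem); the coefficients are constant and $\bm{\nu} \in C^\infty(\partial\Omega)$ since $\partial\Omega$ is smooth, so the coefficient-regularity requirements are trivially met. To run \cite{prusssimonett}*{Theorem 6.3.2} on the unbounded interval $(0,\infty)$ one needs the realization of the shifted operator to be boundedly invertible and $\mathcal{R}$-sectorial of angle $<\pi/2$ on $L^r(\Omega)$: this is where the shift enters, and one fixes $\omega_0>0$ so that $\omega-\Delta$ with the Neumann boundary condition has these properties for all $\omega\ge\omega_0$ (in fact any $\omega>0$ works, since $-\Delta$ with Neumann boundary conditions is $\mathcal{R}$-sectorial on $L^r(\Omega)$ with spectrum in $[0,\infty)$, so the shift by any positive $\omega$ renders it boundedly invertible). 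The theorem then supplies, for each $\omega\ge\omega_0$, a unique $U_\omega\in\mathbb{E}_{q,r}^0$ solving the shifted equation with $U_\omega(0)=F_0$, together with the asserted estimate. Finally the boundary condition is read off exactly as in \eqref{VBC}: since $U_\omega\in L^q(0,\infty;W^{2,r}(\Omega))$ and $\bm{F}_{\mathrm{B}}\in L^q(0,\infty;W^{1,r}(\Omega)^N)$, both $\nabla U_\omega$ and $\bm{F}_{\mathrm{B}}$ lie in $L^q(0,\infty;W^{1,r}(\Omega)^N)$, so $\gamma(\nabla U_\omega-\bm{F}_{\mathrm{B}})$ is well defined and the identity $\partial_{\bm{\nu}}U_\omega=\bm{F}_{\mathrm{B}}\cdot\bm{\nu}$ furnished by the theorem means precisely that $(\gamma(\nabla U_\omega-\bm{F}_{\mathrm{B}}))\cdot\bm{\nu}=0$ for a.e.~$(t,x)\in(0,\infty)\times\partial\Omega$.

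Since Proposition~\ref{maxneu} is essentially a citation, there is no real obstacle; the only step that requires attention — and the reason Proposition~\ref{maxtrace} was stated in its sharp Triebel--Lizorkin form rather than with a weaker Besov/Sobolev time-regularity — is the precise matching of the boundary-data space with the $\partial_{\bm{\nu}}$-trace space of $\mathbb{E}_{q,r}^0$, including the correct appearance of the mixed-scale space $L^q(W^{1-1/r,r}(\partial\Omega)) \cap F_{q,r}^{(1/2)(1-1/r)}(L^r(\partial\Omega))$ and the attendant case distinction in the compatibility condition according to the sign of $1 - 1/r - 2/q$. Once this bookkeeping is carried out, everything else follows directly from \cite{prusssimonett}*{Theorem 6.3.2}.
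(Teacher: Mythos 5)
Your proposal is correct and follows essentially the same route as the paper: both reduce Proposition~\ref{maxneu} to Pr\"uss--Simonett \cite{prusssimonett}*{Theorem 6.3.2}, using Proposition~\ref{maxtrace} to place $(\gamma\bm{F}_{\mathrm{B}})\cdot\bm{\nu}$ in the mixed trace class with $\kappa=\tfrac12(1-\tfrac1r)$ and identifying the compatibility condition with the regime $1/r+2/q<1$ before choosing $\omega_0$ sufficiently large. Your additional remarks on normal ellipticity, the Lopatinskii--Shapiro condition, and the (unneeded) claim that any $\omega>0$ suffices are harmless elaborations of the same argument.
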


\begin{proof}
Note that Proposition \ref{maxtrace} yields
\begin{equation}
\begin{aligned}
\|\nabla \cdot \bm{F}_{\mathrm{B}}\|_{L^q(L^r(\Omega))} &\le C\|\bm{F}_{\mathrm{B}}\|_{L^q(W^{1,r}(\Omega))}, \\
\|(\gamma \bm{F}_{\mathrm{B}}) \cdot \bm{\nu}\|_{L^q(W^{1-1/r,r}(\partial\Omega)) \cap F_{q,r}^{(1/2)(1-1/r)}(L^r(\partial\Omega))} &\le C\|\bm{F}_{\mathrm{B}}\|_{L^q(W^{1,r}(\Omega)) \cap W^{1,q}(L^r(\Omega))}.
\end{aligned}
\end{equation}
We set $m=m_j=\mu=1$ in the assertion given by Pr\"uss and Simonett \cite{prusssimonett}*{Theorem 6.3.2}.
Then we have $\kappa_j \coloneqq 1-m_j/(2m)-1/(2mr)=(1/2)(1-1/r)$.
In case $1/r+2/q<1$, the compatibility condition $\nabla F_0 \cdot \bm{\nu}=\bm{F}_{\mathrm{B}}(0,\,\cdot\,) \cdot \bm{\nu}$ on $\partial\Omega$ appears from $\kappa_j =(1/2)(1-1/r)>1/2 \cdot 2/q=1/q+1-\mu$.
Hence, by \cite{prusssimonett}*{Theorem 6.3.2} we may find a sufficiently large $\omega_0>0$ and obtain the desired result.
\end{proof}

We now turn to the proof of Theorem \ref{maxneu0}.

\begin{proof}[Proof of Theorem \ref{maxneu0}]
In the following, let $0 \le \lambda<\lambda_{\mathrm{N}}(\Omega)/q$, where $\lambda_{\mathrm{N}}(\Omega)>0$ is given by \eqref{lamdef}.
By the assumptions \eqref{Fcond}, we may apply Proposition \ref{maxneu} and observe that there exist a constant $\omega>0$ and a unique global strong solution $U_{\omega} \in \mathbb{E}_{q,r}^0$ to the following equations
\begin{equation}\label{uoeq}
\left\{\begin{aligned}
\partial_tU_{\omega}+\omega U_{\omega}-\Delta U_{\omega} &= -e^{\lambda t}\nabla \cdot \bm{F}_{\mathrm{B}}+e^{\lambda t}F_{\mathrm{E}}, & t>0, \, x &\in \Omega, \\
\nabla U_{\omega} \cdot \bm{\nu} &=e^{\lambda t}\bm{F}_{\mathrm{B}} \cdot \bm{\nu}, & t>0, \, x &\in \partial\Omega, \\
U_{\omega} (0,x) &= F_0(x), & x &\in \Omega.
\end{aligned}\right.
\end{equation}
In addition, the solution $U_\omega$ satisfies
\begin{equation}\label{uoest}
\|U_{\omega}\|_{\mathbb{E}_{q,r}^0} \le C\left(\|F_0\|_{B_{r,q}^{2-2/q}(\Omega)}+\|e^{\lambda t}F_{\mathrm{E}}\|_{L^q(L^r(\Omega))}+\|e^{\lambda t}\bm{F}_{\mathrm{B}}\|_{L^q(W^{1,r}(\Omega)) \cap W^{1,q}(L^r(\Omega))}\right).
\end{equation}
Here, by integrating the first equation of \eqref{uoeq} over $\Omega$ and using $e^{\lambda t}F_{\mathrm{E}}(t,\,\cdot\,) \in L_0^r(\Omega)$ for all $0<t<\infty$, we deduce that
\begin{equation}
\partial_t\int_{\Omega}U_{\omega}(t,x)\,dx+\omega \int_{\Omega}U_{\omega}(t,x)\,dx-\int_{\Omega}\Delta U_{\omega}(t,x)\,dx=-\int_{\Omega}e^{\lambda t}\nabla \cdot \bm{F}_{\mathrm{B}}(t,x)\,dx.
\end{equation}
Moreover, Proposition \ref{gaussdiv} yields 
\begin{equation}
\partial_t\int_{\Omega}U_{\omega}(t,x)\,dx=-\omega \int_{\Omega}U_{\omega}(t,x)\,dx \qquad \text{for all $0<t<\infty$}.
\end{equation}
Therefore, by noting $F_0 \in L_0^r(\Omega)$, we observe that
$$\int_{\Omega}U_{\omega}(t,x)\,dx=e^{-\omega t}\int_{\Omega}F_0(x)\,dx \equiv 0,$$
i.e., $U_{\omega} \in \mathbb{E}_{q,r}^0(-\Delta_{\mathrm{N}})$.
Note that the solution $U_\omega$ to \eqref{uoeq} solves
\begin{equation}\label{uoeq2}
\left\{\begin{aligned}
\partial_t(e^{-\lambda t}U_{\omega})+(\omega+\lambda)e^{-\lambda t}U_{\omega}-\Delta (e^{-\lambda t}U_{\omega}) &= -\nabla \cdot \bm{F}_{\mathrm{B}}+F_{\mathrm{E}}, & t>0, \, x &\in \Omega, \\
\nabla (e^{-\lambda t}U_{\omega}) \cdot \bm{\nu} &=\bm{F}_{\mathrm{B}} \cdot \bm{\nu}, & t>0, \, x &\in \partial\Omega, \\
U_{\omega} (0,x) &= F_0(x), & x &\in \Omega.
\end{aligned}\right.
\end{equation}
We now set
\begin{equation}\label{un0def}
U_{\mathrm{N},0}(t,x) \coloneqq (\omega+\lambda)\int_0^t e^{\lambda (t-\tau)}e^{(t-\tau)\Delta_{\mathrm{N},0}}U_{\omega}(\tau,x) \, d\tau, \quad (t,x) \in (0,\infty) \times \Omega.
\end{equation}
Then it holds by Proposition \ref{ipsipro} (i) and \eqref{uoest} that $U_{\mathrm{N},0} \in L^q(0,\infty;L_0^r(\Omega))$ with the estimate
\begin{align}
\|U_{\mathrm{N},0}\|_{L^q(L^r(\Omega))} &\le C\|U_{\omega}\|_{L^q (L^r (\Omega))} \\
&\le C \Big(\|F_0\|_{B^{2-2/q}_{r,q}(\Omega)}+\|e^{\lambda t} F_{\mathrm{E}}\|_{L^q(L^r(\Omega))}+\|e^{\lambda t}\bm{F}_{\mathrm{B}}\|_{L^q(W^{1,r}(\Omega)) \cap W^{1,q}(L^r(\Omega))} \Big).
\end{align}
Hence, by applying Proposition \ref{maxneu}, we see that there exists a unique global strong solution $\widetilde{U}_{\mathrm{N},0} \in \mathbb{E}_{q,r}^0$ to the following equations
\begin{equation}\label{un0eq}
\left\{\begin{aligned}
\partial_t\widetilde{U}_{\mathrm{N},0}+\omega\widetilde{U}_{\mathrm{N},0}-\Delta\widetilde{U}_{\mathrm{N},0} &=(\omega+\lambda)(U_{\omega}+U_{\mathrm{N},0}), & t>0, \, x &\in \Omega, \\
\nabla \widetilde{U}_{\mathrm{N},0} \cdot \bm{\nu} &=0, & t>0, \, x &\in \partial\Omega, \\
\widetilde{U}_{\mathrm{N},0} (0,x) &=0, & x &\in \Omega.
\end{aligned}\right.
\end{equation}
In addition, we obtain
\begin{align}
\|\widetilde{U}_{\mathrm{N},0}\|_{\mathbb{E}^0_{q,r}} &\le C\Big(\|U_{\omega}\|_{L^q(L^r(\Omega))}+\|U_{\mathrm{N},0}\|_{L^q(L^r(\Omega))} \Big) \\
&\le C\Big(\|F_0\|_{B^{2-2/q}_{r,q}(\Omega)}+\|e^{\lambda t} F_{\mathrm{E}}\|_{L^q(L^r(\Omega))}+\|e^{\lambda t}\bm{F}_{\mathrm{B}}\|_{L^q(W^{1,r}(\Omega)) \cap W^{1,q}(L^r(\Omega))} \Big).
\end{align}
Note that $(U_{\omega}+U_{\mathrm{N},0})(t,\,\cdot\,) \in L_0^r(\Omega)$ for all $0<t<\infty$, and hence we see by \eqref{un0eq} and Proposition \ref{gaussdiv} that 
\begin{equation}
\partial_t\int_{\Omega}\widetilde{U}_{\mathrm{N},0}(t,x)\,dx+\omega\int_{\Omega}\widetilde{U}_{\mathrm{N},0}(t,x)\,dx=0 \qquad \text{for all $0<t<\infty$}.
\end{equation}
Thus we have $\widetilde{U}_{\mathrm{N},0} \in L^q(0,\infty;D(-\Delta_{\mathrm{N},0}))$.
We also see by \eqref{un0eq} that
\begin{equation}\label{un0eq2}
\left\{\begin{aligned}
\partial_t(e^{-\lambda t}\widetilde{U}_{\mathrm{N},0})-\Delta_{\mathrm{N},0}(e^{-\lambda t}\widetilde{U}_{\mathrm{N},0}) &=(\omega+\lambda)e^{-\lambda t}(U_{\omega}+U_{\mathrm{N},0}-\widetilde{U}_{\mathrm{N},0}), & t>0, \, x &\in \Omega, \\
\widetilde{U}_{\mathrm{N},0} (0,x) &=0, & x &\in \Omega,
\end{aligned}\right.
\end{equation}
which yields
\begin{equation}
e^{-\lambda t}\widetilde{U}_{\mathrm{N},0} (t,x) = (\omega+\lambda)\int_0^te^{(t-\tau)\Delta_{\mathrm{N},0}}e^{-\lambda \tau}(U_{\omega}+U_{\mathrm{N},0}-\widetilde{U}_{\mathrm{N},0})(\tau,x) \, d\tau
\end{equation}
for all $(t,x) \in (0,\infty) \times \Omega$ from the Duhamel principle.
Since \eqref{un0def} implies that
\begin{equation}
\begin{aligned}
\widetilde{U}_{\mathrm{N},0}(t,x) &= (\omega+\lambda)\int_0^te^{\lambda (t-\tau)}e^{(t-\tau)\Delta_{\mathrm{N},0}}(U_{\omega}+U_{\mathrm{N},0}-\widetilde{U}_{\mathrm{N},0})(\tau,x) \, d\tau \\
&= U_{\mathrm{N},0}(t,x)+(\omega+\lambda)\int_0^te^{\lambda (t-\tau)}e^{(t-\tau)\Delta_{\mathrm{N},0}}(U_{\mathrm{N},0}-\widetilde{U}_{\mathrm{N},0})(\tau,x) \, d\tau,
\end{aligned}
\end{equation}
we infer from the uniqueness assertion in Proposition \ref{ipsipro} (i) that $\widetilde{U}_{\mathrm{N},0}=U_{\mathrm{N},0}$, and hence we have
\begin{align}
\|U_{\mathrm{N},0}\|_{\mathbb{E}^0_{q, r}} \le C\Big(\|F_0\|_{B^{2-2/q}_{r,q}(\Omega)}+\|e^{\lambda t} F_{\mathrm{E}} \|_{L^q(L^r(\Omega))}+\|e^{\lambda t} \bm{F}_{\mathrm{B}}\|_{L^q(W^{1,r}(\Omega)) \cap W^{1,q}(L^r(\Omega))} \Big).
\end{align}
Together with \eqref{uoest}, we observe that the function $U$ defined by
$$U(t,x) \coloneqq e^{-\lambda t}\{U_{\omega}(t,x) + U_{\mathrm{N},0}(t,x)\}, \quad (t,x) \in (0,\infty) \times \Omega$$
satisfies $e^{\lambda t}U \in \mathbb{E}_{q,r}^0(-\Delta_{\mathrm{N}})$ with the estimate \eqref{nheatest}.
In addition, since $U_{\omega}$ and $U_{\mathrm{N},0}=\widetilde{U}_{\mathrm{N},0}$ satisfy \eqref{uoeq2} and \eqref{un0eq2}, respectively, we deduce that $U$ is a solution to \eqref{nheat}.
The proof is complete.
\end{proof}

\subsection{Shifted Neumann heat equation and the Stokes system}

We next prepare the maximal regularity results for a shifted Neumann heat equation and the Stokes system.

\begin{lemm}\label{maxheatstokes}
Let $r,q \in (1,\infty)$ satisfy $1/r+2/q<2$.
Then the following statements hold:
\begin{enumerate}
\item Let $0<\lambda<\infty$ and assume that $F_0 \in B_{r,q}^{3-2/q}(\Omega)$ with the condition $\nabla F_0 \cdot \bm{\nu}=0$ on $\partial\Omega$ and $F_{\mathrm{E}} \in L^q(0,\infty;W^{1,r}(\Omega))$.
Then there exists a unique global strong solution $U \in \mathbb{E}_{q,r}^1(1-\Delta_{\mathrm{N},1})$ to the equation
\begin{equation}
\left\{\begin{aligned}
\partial_tU+\lambda U-\Delta_{\mathrm{N},1}U &= F_{\mathrm{E}}, & t>0, \, x &\in \Omega, \\
U (0,x) &= F_0(x), & x &\in \Omega.
\end{aligned}\right.
\end{equation}
Moreover, it holds that
$$\|U\|_{\mathbb{E}_{q,r}^1} \le C\left(\|F_0\|_{B_{r,q}^{3-2/q}(\Omega)}+\|F_{\mathrm{E}}\|_{L^q(W^{1,r}(\Omega))}\right),$$
where $C>0$ is a constant independent of $F_0$, $F_{\mathrm{E}}$, and $U$.

\item Let $0 \le \lambda <\lambda_{\mathrm{D}}(\Omega)/q$, where $\lambda_{\mathrm{D}}(\Omega)>0$ is given by \eqref{lamdef}.
Assume that $\bm{F}_0 \in L_{\sigma}^r(\Omega) \cap B_{r,q}^{2-2/q}(\Omega)^N$ with the condition $\bm{F}_0=0$ on $\partial\Omega$ and $e^{\lambda t}\bm{F}_{\mathrm{E}} \in L^q(0,\infty;L_{\sigma}^r(\Omega))$.
Then there exists a unique global strong solution $\bm{U} \in \mathbb{E}_{q,r}^0(A)$ to the Stokes system
\begin{equation}
\left\{\begin{aligned}
\partial_t\bm{U}+A\bm{U} &= \bm{F}_{\mathrm{E}}, & t>0, \, x &\in \Omega, \\
\bm{U}(0,x) &= \bm{F}_0(x), & x &\in \Omega.
\end{aligned}\right.
\end{equation}
Moreover, it holds that
\begin{equation}
\|e^{\lambda t}\bm{U}\|_{\mathbb{E}_{q,r}^0} \le C\left(\|\bm{F}_0\|_{B_{r,q}^{2-2/q}(\Omega)}+\|e^{\lambda t}\bm{F}_{\mathrm{E}}\|_{L^q(L^r(\Omega))}\right),
\end{equation}
where $C>0$ is a constant independent of $\bm{F}_0$, $\bm{F}_{\mathrm{E}}$, and $\bm{U}$.
\end{enumerate}
\end{lemm}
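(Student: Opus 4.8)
The plan is, for each part, to combine a standard maximal regularity statement for an exponentially stable sectorial operator on the half-line with the bootstrap device furnished by Proposition \ref{ipsipro}.

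For part (i), I would identify $\lambda I-\Delta_{\mathrm{N},1}$ with the realization in $W^{1,r}(\Omega)$ of the Neumann Laplacian shifted by $\lambda>0$, obtained from its $L^r(\Omega)$-realization $\lambda-\Delta_{\mathrm{N}}$ on $W_{\nu}^{2,r}(\Omega)$ through the interpolation--extrapolation scale of Amann \cite{amann}. For $\lambda>0$ the operator $\lambda-\Delta_{\mathrm{N}}$ is invertible on $L^r(\Omega)$, since the spectrum of $-\Delta_{\mathrm{N}}$ lies in $[0,\infty)$, and it admits a bounded $H^{\infty}$-calculus of angle $<\pi/2$; moreover $D((\lambda-\Delta_{\mathrm{N}})^{1/2})=W^{1,r}(\Omega)$ and $D((\lambda-\Delta_{\mathrm{N}})^{3/2})=W_{\nu}^{3,r}(\Omega)$, and the part of $\lambda-\Delta_{\mathrm{N}}$ in $W^{1,r}(\Omega)$ is precisely $\lambda-\Delta_{\mathrm{N},1}$. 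Hence $\lambda-\Delta_{\mathrm{N},1}$ inherits a bounded $H^{\infty}$-calculus, so it is $\mathcal{R}$-sectorial of angle $<\pi/2$ with $0\in\rho(\lambda-\Delta_{\mathrm{N},1})$, and by the standard characterization of maximal regularity (cf.~\cite{prusssimonett}) it possesses maximal $L^q$-regularity on $(0,\infty)$. The trace space at $t=0$ is $(W^{1,r}(\Omega),W_{\nu}^{3,r}(\Omega))_{1-1/q,q}$; since $1/r+2/q<2$ forces $3-2/q>1+1/r$, the Neumann trace $\nabla F_0\cdot\bm{\nu}$ is meaningful on $B_{r,q}^{3-2/q}(\Omega)=(W^{1,r}(\Omega),W^{3,r}(\Omega))_{1-1/q,q}$, and this trace space equals $\{\psi\in B_{r,q}^{3-2/q}(\Omega)\mid\nabla\psi\cdot\bm{\nu}=0\ \text{on}\ \partial\Omega\}$. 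The asserted existence, uniqueness, and estimate then follow immediately.

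For part (ii), I would first recall that the Stokes operator $A$ on $L_{\sigma}^r(\Omega)$ is invertible and admits a bounded $H^{\infty}$-calculus of angle $<\pi/2$, hence is $\mathcal{R}$-sectorial with $0\in\rho(A)$ and possesses maximal $L^q$-regularity on $(0,\infty)$; moreover $\|e^{-tA}\bm{\psi}\|_{L^r(\Omega)}\le Ce^{-\lambda_{\mathrm{D}}(\Omega)t}\|\bm{\psi}\|_{L^r(\Omega)}$, as already used in the proof of Proposition \ref{ipsipro}(ii). Since $\lambda\ge0$ gives $\|\bm{F}_{\mathrm{E}}\|_{L^q(L^r(\Omega))}\le\|e^{\lambda t}\bm{F}_{\mathrm{E}}\|_{L^q(L^r(\Omega))}$, the unweighted theory already yields a unique $\bm{U}\in\mathbb{E}_{q,r}^0(A)$ solving the Stokes system, the membership $\bm{F}_0\in L_{\sigma}^r(\Omega)\cap B_{r,q}^{2-2/q}(\Omega)^N$ with $\bm{F}_0=0$ on $\partial\Omega$ being exactly the trace space $(L_{\sigma}^r(\Omega),D(A))_{1-1/q,q}$ under $1/r+2/q<2$. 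To upgrade to the weighted estimate, I would use the Duhamel formula $\bm{U}(t)=e^{-tA}\bm{F}_0+\int_0^t e^{-(t-\tau)A}\bm{F}_{\mathrm{E}}(\tau)\,d\tau$, fix $\lambda_*\in(\lambda,\lambda_{\mathrm{D}}(\Omega)/q)$, and observe that, with $G(\tau):=e^{\lambda\tau}\bm{F}_{\mathrm{E}}(\tau)\in L^q(0,\infty;L_{\sigma}^r(\Omega))$ and $\mathcal{J}$ as in Proposition \ref{ipsipro}(ii) with the same $\lambda$,
\begin{equation}
e^{\lambda t}\bm{U}(t)=e^{\lambda t}e^{-tA}\bm{F}_0+(\mathcal{J}G)(t).
\end{equation}
The first term is bounded in $L^q(0,\infty;L^r(\Omega))$ by $C\|\bm{F}_0\|_{L^r(\Omega)}$ because $\lambda<\lambda_{\mathrm{D}}(\Omega)$, and the second by $C_{\lambda_*}\|e^{\lambda t}\bm{F}_{\mathrm{E}}\|_{L^q(L^r(\Omega))}$ by Proposition \ref{ipsipro}(ii); hence $e^{\lambda t}\bm{U}\in L^q(0,\infty;L_{\sigma}^r(\Omega))$ with the expected bound.

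A bootstrap then closes the proof: putting $\bm{V}:=e^{\lambda t}\bm{U}$, a direct computation gives $\partial_t\bm{V}+A\bm{V}=\lambda\bm{V}+e^{\lambda t}\bm{F}_{\mathrm{E}}$ with $\bm{V}(0)=\bm{F}_0$, whose right-hand side lies in $L^q(0,\infty;L_{\sigma}^r(\Omega))$ by the previous step, so applying the unweighted Stokes maximal regularity once more, now to $\bm{V}$, yields $\bm{V}\in\mathbb{E}_{q,r}^0$ with
\begin{equation}
\|e^{\lambda t}\bm{U}\|_{\mathbb{E}_{q,r}^0}=\|\bm{V}\|_{\mathbb{E}_{q,r}^0}\le C\bigl(\|\bm{F}_0\|_{B_{r,q}^{2-2/q}(\Omega)}+\lambda\|\bm{V}\|_{L^q(L^r(\Omega))}+\|e^{\lambda t}\bm{F}_{\mathrm{E}}\|_{L^q(L^r(\Omega))}\bigr),
\end{equation}
which collapses to the asserted estimate by the $L^q(L^r)$-bound just obtained; uniqueness descends from the unweighted statement, since multiplication by $e^{-\lambda t}$ maps $\mathbb{E}_{q,r}^0$ into itself. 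I expect the only genuinely delicate point to be this last maneuver: the exponential weight $e^{\lambda t}$ is not admissible for the bare maximal regularity theorem, and it is the gain encoded in Proposition \ref{ipsipro}(ii) together with the strict inequality $\lambda<\lambda_{\mathrm{D}}(\Omega)/q$ (respectively the invertibility coming from $\lambda>0$ in part (i)) that rescues the argument; identifying the relevant trace spaces under the hypothesis $1/r+2/q<2$ is the other point demanding some care.
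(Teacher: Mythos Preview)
Your proposal is correct. Both parts reach the same conclusions as the paper, but by somewhat different paths.

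For part (i), the paper simply invokes \cite{prusssimonett}*{Theorem~6.3.3} with $m=m_j=\mu=1$, reading off the compatibility condition from the index arithmetic and noting that $\omega_0=\sup\{\sigma(\Delta_{\mathrm{N},1})\cap\mathbb{R}\}=0$ so that any $\lambda>0$ is admissible. Your route via the interpolation--extrapolation scale and bounded $H^{\infty}$-calculus is more operator-theoretic and self-contained, but requires you to import (or prove) the $H^{\infty}$-calculus for the Neumann Laplacian and the domain identifications $D((\lambda-\Delta_{\mathrm{N}})^{1/2})=W^{1,r}(\Omega)$, $D((\lambda-\Delta_{\mathrm{N}})^{3/2})=W_{\nu}^{3,r}(\Omega)$; the paper's citation hides all of this.

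For part (ii), the paper mirrors its proof of Theorem~\ref{maxneu0}: it first solves $\partial_t\bm{U}_1+A\bm{U}_1=e^{\lambda t}\bm{F}_{\mathrm{E}}$, defines $\bm{U}_2(t)=\lambda\int_0^t e^{\lambda(t-\tau)}e^{-(t-\tau)A}\bm{U}_1(\tau)\,d\tau$, upgrades $\bm{U}_2$ to $\mathbb{E}_{q,r}^0(A)$ by solving an auxiliary problem and invoking the uniqueness clause of Proposition~\ref{ipsipro}(ii), and finally sets $\bm{U}=e^{-\lambda t}(\bm{U}_1+\bm{U}_2)$. Your argument is more direct: you first obtain $\bm{U}$ from the unweighted theory, then use Duhamel together with Proposition~\ref{ipsipro}(ii) to control $e^{\lambda t}\bm{U}$ in $L^q(L^r)$, and bootstrap via the equation satisfied by $\bm{V}=e^{\lambda t}\bm{U}$. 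Both arguments hinge on the same exponential gain encoded in Proposition~\ref{ipsipro}(ii); yours avoids the auxiliary decomposition and the appeal to the uniqueness clause there, at the cost of a (harmless) implicit identification of $\bm{V}$ with the maximal-regularity solution on finite intervals.
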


\begin{proof}
\textbf{(i)} Applying \cite{prusssimonett}*{Theorem 6.3.3}, we have the desired result.
More precisely, we set $m=m_j=\mu=1$ in \cite{prusssimonett}*{Theorem 6.3.3}.
Then we have 
\begin{equation}
\kappa_j \coloneqq 1-\frac{m_j}{2m}-\frac{1}{2mr}=\frac12 \bigg(1-\frac1r\bigg),
\end{equation}
and thus the compatibility condition $\nabla F_0 \cdot \bm{\nu}=0$ on $\partial\Omega$ appears from the condition 
\begin{equation}
\kappa_j =\frac12 \bigg(1-\frac1r\bigg)>\frac12\bigg(\frac2q-1\bigg)=\frac1q+1-\mu-\frac{1}{2m}.
\end{equation}
Concerning the constant $\omega_0 \in \mathbb{R}$ appearing in \cite{prusssimonett}*{Theorem 6.3.3}, we infer from the discussion in \cite{prusssimonett}*{Section 6.3.4} that $\omega_0=\sup\{\sigma(\Delta_{\mathrm{N},1}) \cap \mathbb{R}\}=0$, where $\sigma(\Delta_{\mathrm{N},1}) \subset \mathbb{C}$ denotes the spectrum of $\Delta_{\mathrm{N},1}$.
Hence we may take $\omega=\lambda$ in \cite{prusssimonett}*{Theorem 6.3.3} due to $\lambda>0=\omega_0$.

\noindent
\textbf{(ii)} The assertion in the case of $\lambda=0$ is a direct consequence of \cite{prusssimonett}*{Theorem 7.3.2} with $\mu=1$ and $\omega=0$ due to $\omega_0=\sup\{\sigma(-A) \cap \mathbb{R}\}<0$.
Note that the condition of the initial data $\bm{F}_0$ is characterized by $\bm{F}_0 \in (L_{\sigma}^r(\Omega),D(A))_{1-1/q,q}$ from Propositions 3.4.2 and 3.4.4 in \cite{prusssimonett}.
Here, it is well-known that this real interpolation space is equivalent to $L_{\sigma}^r(\Omega) \cap B_{r,q}^{2-2/q}(\Omega)^N$ due to $1/r<2-2/q<2$, see Theorem 3.4 and Remark 3.7 (b) in \cite{amann}.

In the case of $0<\lambda <\lambda_{\mathrm{D}}(\Omega)/q$, we simply rely on a similar method to the proof of Theorem \ref{maxneu0} by applying Proposition \ref{ipsipro} (ii) instead of the assertion of (i).
More precisely, to complete the proof, we construct a unique global strong solution $\bm{U}_1 \in \mathbb{E}_{q,r}^0(A)$ to
\begin{equation}
\left\{\begin{aligned}
\partial_t\bm{U}_1+A\bm{U}_1 &= e^{\lambda t}\bm{F}_{\mathrm{E}}, & t>0, \, x &\in \Omega, \\
\bm{U}_1(0,x) &= \bm{F}_0(x), & x &\in \Omega
\end{aligned}\right.
\end{equation}
with a suitable maximal regularity estimate and define
$$\bm{U}_2(t,x) \coloneqq \lambda\int_0^te^{\lambda(t-\tau)}e^{-(t-\tau)A}\bm{U}_1(\tau,x) \, d\tau, \quad (t,x) \in (0,\infty) \times \Omega.$$
Since we may show the existence of a global strong solution $\widetilde{\bm{U}}_2 \in \mathbb{E}_{q,r}^0(A)$ to
\begin{equation}
\left\{\begin{aligned}
\partial_t\widetilde{\bm{U}}_2+A\widetilde{\bm{U}}_2 &= \lambda (\bm{U}_1+\bm{U}_2), & t>0, \, x &\in \Omega, \\
\widetilde{\bm{U}}_2(0,x) &= 0, & x &\in \Omega
\end{aligned}\right.
\end{equation}
with a suitable maximal regularity estimate and since the uniqueness assertion in Proposition \ref{ipsipro} (ii) ensures that $\widetilde{\bm{U}}_2=\bm{U}_2$, by setting
$$\bm{U}(t,x) \coloneqq e^{-\lambda t}\{\bm{U}_1(t,x) + \bm{U}_2(t,x)\}, \quad (t,x) \in (0,\infty) \times \Omega,$$
we have the desired estimate.
\end{proof}

\section{Maximal regularity theorem for the Keller--Segel--Navier--Stokes system}
\label{sec-4}

In this section, we give the proof Theorem \ref{globalsol}, which gives the existence of unique global strong solutions to the Keller--Segel--Navier--Stokes system \eqref{ABS}.
Since the initial density $n_0$ does not satisfy the mean value condition $\int_\Omega n_0 (x)\, dx = 0$ as in Theorem \ref{maxneu0}, we first introduce $\widetilde{n}$ and $\widetilde{c}$ by setting
$$\widetilde{n}(t,x) \coloneqq n(t,x)-\overline{n}_0, \quad \widetilde{c}(t,x) \coloneqq c(t,x)-(1-e^{-t})\overline{n}_0$$
for $(t,x) \in (0,\infty) \times \Omega$.
If $(n,c,\bm{u})$ is a solution to System \eqref{ABS}, then we observe that $(\widetilde{n},\widetilde{c},\bm{u})$ satisfies
\begin{equation}\label{shift}
\left\{\begin{aligned}
\partial_t\widetilde{n}-\Delta \widetilde{n} &= -\nabla \cdot ((\widetilde{n}+\overline{n}_0)S(t,x)\nabla \widetilde{c})-\bm{u} \cdot \nabla \widetilde{n}, & t>0, \, x &\in \Omega, \\
\partial_t\widetilde{c}+(1-\Delta_{\mathrm{N},1})\widetilde{c} &=\widetilde{n}-\bm{u} \cdot \nabla \widetilde{c}, & t>0, \, x &\in \Omega, \\
\partial_t\bm{u}+A\bm{u} &= -P(\bm{u}\cdot\nabla)\bm{u}+P(\widetilde{n}\nabla \varphi)+P\bm{f}, & t>0, \, x &\in \Omega, \\
\nabla \widetilde{n} \cdot \bm{\nu} &=(\widetilde{n}+\overline{n}_0)S(t,x)\nabla \widetilde{c} \cdot \bm{\nu}, & t>0, \, x &\in \partial\Omega, \\
(\widetilde{n},\widetilde{c},\bm{u})(0,x) &= (n_0-\overline{n}_0,c_0,\bm{u}_0)(x), & x &\in \Omega.
\end{aligned}\right.
\end{equation}
In the following we shall construct global strong solutions $(\widetilde{n},\widetilde{c},\bm{u})$ to System \eqref{shift} by applying Theorem \ref{maxneu0} and Lemma \ref{maxheatstokes} together with the Banach fixed point theorem.
Concerning the nonlinear terms appearing in \eqref{shift}, we will make use of the following estimates. 

\begin{lemm}\label{nonlest}
Let $N<r<\infty$ and $2<q<\infty$ and let $\varphi$ satisfy $\nabla\varphi \in L^r(\Omega)^N$.
Assume that $S$ satisfies \eqref{scond} and $\widetilde{n} \in \mathbb{E}_{q,r}^0$, $\widetilde{c} \in \mathbb{E}_{q,r}^1$, and $\bm{u},\bm{u}^* \in (\mathbb{E}_{q,r}^0)^N$.
Then the following statements hold:
\begin{enumerate}
\item It holds that
\begin{equation}
\begin{aligned}
\bm{u} \cdot \nabla\widetilde{n} &\in L^q(0,\infty;L^r(\Omega)), \quad \bm{u} \cdot \nabla \widetilde{c} \in L^q(0,\infty;W^{1,r}(\Omega)), \\
(\bm{u}\cdot\nabla)\bm{u}^*,\widetilde{n}\nabla\varphi &\in L^q(0,\infty;L^r(\Omega)^N)
\end{aligned}
\end{equation}
with the estimates
\begin{equation}
\begin{aligned}
\|\bm{u}\cdot\nabla\widetilde{n}\|_{L^q(L^r(\Omega))} &\le C\|\bm{u}\|_{\mathbb{E}_{q,r}^0}\|\widetilde{n}\|_{\mathbb{E}_{q,r}^0}, & \|\bm{u}\cdot\nabla \widetilde{c}\|_{L^q(W^{1,r}(\Omega))} &\le C\|\bm{u}\|_{\mathbb{E}_{q,r}^0}\|\widetilde{c}\|_{\mathbb{E}_{q,r}^1}, \\
\|(\bm{u}\cdot\nabla)\bm{u}^*\|_{L^q(L^r(\Omega))} &\le C\|\bm{u}\|_{\mathbb{E}_{q,r}^0}\|\bm{u}^*\|_{\mathbb{E}_{q,r}^0}, & \|\widetilde{n}\nabla\varphi\|_{L^q(L^r(\Omega))} &\le C\|\widetilde{n}\|_{\mathbb{E}_{q,r}^0}\|\nabla\varphi\|_{L^r(\Omega)},
\end{aligned}
\end{equation}
where $C>0$ is a constant independent of $\varphi$, $\widetilde{n}$, $\widetilde{c}$, $\bm{u}$, and $\bm{u}^*$.

\item Let $\alpha \in \mathbb{R}$.
Then it holds that
\begin{equation}
(\widetilde{n}+\alpha)S\nabla \widetilde{c} \in L^q(0,\infty;W^{1,r}(\Omega)^N) \cap W^{1,q}(0,\infty;L^r(\Omega)^N)
\end{equation}
with the estimate
\begin{equation}
\|(\widetilde{n}+\alpha)S\nabla \widetilde{c}\|_{L^q(W^{1,r}(\Omega)) \cap W^{1,q}(L^r(\Omega))} \le CM_S(\|\widetilde{n}\|_{\mathbb{E}_{q,r}^0}+|\alpha|)\|\widetilde{c}\|_{\mathbb{E}_{q,r}^1},
\end{equation}
where
\begin{equation}\label{msdef}
M_S \coloneqq 1+\|S\|_{L^{\infty}(W^{1,r}(\Omega))}+\|\partial_tS\|_{L^q(L^r(\Omega))}
\end{equation}
and $C>0$ is a constant independent of $\alpha$, $S$, $\widetilde{n}$, and $\widetilde{c}$.
\end{enumerate}
\end{lemm}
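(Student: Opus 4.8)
The plan is to prove the two estimates separately, relying throughout on the key embeddings guaranteed by $N<r<\infty$ and $2<q<\infty$, namely $\mathbb{E}_{q,r}^0 \hookrightarrow L^\infty(0,\infty;W^{1,r}(\Omega)) \hookrightarrow L^\infty(0,\infty;C(\overline\Omega))$ and $\mathbb{E}_{q,r}^1 \hookrightarrow L^\infty(0,\infty;W^{2,r}(\Omega)) \hookrightarrow L^\infty(0,\infty;C^1(\overline\Omega))$ from Corollary \ref{ncuemb}, together with the Banach algebra property $W^{1,r}(\Omega) \cdot W^{1,r}(\Omega) \hookrightarrow W^{1,r}(\Omega)$ which holds since $r>N$. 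For part (i), the term $\bm{u}\cdot\nabla\widetilde n$ is handled by putting $\bm{u} \in L^\infty(0,\infty;L^\infty(\Omega)^N)$ and $\nabla\widetilde n \in L^q(0,\infty;L^r(\Omega)^N)$, so the product lies in $L^q(0,\infty;L^r(\Omega))$ with norm bounded by $\|\bm{u}\|_{L^\infty(C(\overline\Omega))}\|\nabla\widetilde n\|_{L^q(L^r(\Omega))} \le C\|\bm{u}\|_{\mathbb{E}_{q,r}^0}\|\widetilde n\|_{\mathbb{E}_{q,r}^0}$. The bound for $\bm{u}\cdot\nabla\widetilde c$ in $L^q(0,\infty;W^{1,r}(\Omega))$ uses the algebra property: $\bm{u} \in L^\infty(0,\infty;W^{1,r}(\Omega)^N)$ and $\nabla\widetilde c \in L^q(0,\infty;W^{1,r}(\Omega)^N)$ (the latter because $\widetilde c \in L^q(0,\infty;W^{2,r}(\Omega))$), so $\bm{u}\cdot\nabla\widetilde c \in L^q(0,\infty;W^{1,r}(\Omega))$ with the asserted bound. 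The terms $(\bm{u}\cdot\nabla)\bm{u}^*$ and $\widetilde n\nabla\varphi$ are immediate: the first is $L^\infty$ times $L^q$ in $L^r$, the second is $\widetilde n \in L^\infty(0,\infty;C(\overline\Omega))$ times $\nabla\varphi \in L^r(\Omega)^N$ (constant in time but with the full $L^q(0,\infty)$ factor coming from the time weight — more precisely one uses $\|\widetilde n\|_{L^q(0,\infty;L^\infty)} \le C\|\widetilde n\|_{\mathbb{E}_{q,r}^0}$, which again follows from the mixed-norm embedding).

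For part (ii), one writes $(\widetilde n+\alpha)S\nabla\widetilde c$ and estimates it in the two norms $L^q(0,\infty;W^{1,r}(\Omega)^N)$ and $W^{1,q}(0,\infty;L^r(\Omega)^N)$. For the first, one again invokes the algebra property of $W^{1,r}(\Omega)$ repeatedly: $\widetilde n+\alpha \in L^\infty(0,\infty;W^{1,r}(\Omega))$ with norm $\le C(\|\widetilde n\|_{\mathbb{E}_{q,r}^0}+|\alpha|)$, $S \in L^\infty(0,\infty;W^{1,r}(\Omega)^{N^2})$ with norm $\le M_S$, and $\nabla\widetilde c \in L^q(0,\infty;W^{1,r}(\Omega)^N)$ with norm $\le C\|\widetilde c\|_{\mathbb{E}_{q,r}^1}$; multiplying gives the bound $CM_S(\|\widetilde n\|_{\mathbb{E}_{q,r}^0}+|\alpha|)\|\widetilde c\|_{\mathbb{E}_{q,r}^1}$ in $L^q(0,\infty;W^{1,r}(\Omega)^N)$.

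The time-derivative estimate in $W^{1,q}(0,\infty;L^r(\Omega)^N)$ is the part requiring the most care, so I would treat it last and expect it to be the main obstacle. By the Leibniz rule, $\partial_t\big((\widetilde n+\alpha)S\nabla\widetilde c\big) = (\partial_t\widetilde n)S\nabla\widetilde c + (\widetilde n+\alpha)(\partial_tS)\nabla\widetilde c + (\widetilde n+\alpha)S\nabla\partial_t\widetilde c$, and each of the three terms must be shown to lie in $L^q(0,\infty;L^r(\Omega)^N)$. The first uses $\partial_t\widetilde n \in L^q(0,\infty;L^r(\Omega))$ (from $\widetilde n \in \mathbb{E}_{q,r}^0$) paired with $S\nabla\widetilde c \in L^\infty(0,\infty;C(\overline\Omega)^N)$ — here one needs $S \in L^\infty(0,\infty;W^{1,r}(\Omega)^{N^2}) \hookrightarrow L^\infty(0,\infty;C(\overline\Omega)^{N^2})$ and $\nabla\widetilde c \in L^\infty(0,\infty;W^{1,r}(\Omega)^N) \hookrightarrow L^\infty(0,\infty;C(\overline\Omega)^N)$, both valid since $r>N$. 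The second term uses $\partial_tS \in L^q(0,\infty;L^r(\Omega)^{N^2})$ against $(\widetilde n+\alpha)\nabla\widetilde c \in L^\infty(0,\infty;L^\infty(\Omega)^N)$. The third term uses $\nabla\partial_t\widetilde c \in L^q(0,\infty;L^r(\Omega)^N)$ (from $\widetilde c \in W^{1,q}(0,\infty;W^{1,r}(\Omega))$) against $(\widetilde n+\alpha)S \in L^\infty(0,\infty;L^\infty(\Omega)^{N})$. Collecting the three contributions and combining with the $L^r$-bound on the function itself (which also follows from the $L^\infty$-in-time bounds on all three factors) yields the claimed estimate with the stated constant structure $CM_S(\|\widetilde n\|_{\mathbb{E}_{q,r}^0}+|\alpha|)\|\widetilde c\|_{\mathbb{E}_{q,r}^1}$; the delicacy lies purely in tracking which factor is taken in $L^\infty$ in time versus $L^q$ in time so that every product lands in $L^q(0,\infty;L^r)$, and in verifying that the scalar embedding constants for $B_{r,q}^{s-2/q} \hookrightarrow C^k$ do not introduce hidden dependence on $\widetilde n,\widetilde c$.
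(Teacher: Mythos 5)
Your proposal is correct and follows essentially the same route as the paper: Sobolev embedding $W^{1,r}(\Omega)\subset C(\overline{\Omega})$ and the Banach algebra property of $W^{1,r}(\Omega)$ for the pointwise-in-time bounds, the trace embedding of Corollary \ref{ncuemb} (ii) to place one factor in $L^{\infty}$ in time and the other in $L^q$ in time, and the Leibniz rule with the same three-term splitting for the $W^{1,q}(L^r)$ part of (ii). The only (harmless) slip is your parenthetical claim that the $L^q(0,\infty;L^r)$ bound of $(\widetilde{n}+\alpha)S\nabla\widetilde{c}$ itself follows from $L^{\infty}$-in-time bounds on all three factors — on the infinite time interval this is not enough, but the needed bound is already contained in the $L^q(W^{1,r})$ estimate you proved, where $\nabla\widetilde{c}$ supplies the $L^q$-in-time integrability, exactly as in the paper.
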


\begin{proof}
\textbf{(i)} Since $r>N$, the Sobolev embedding yields $W^{1,r}(\Omega) \subset C(\overline{\Omega})$.
In particular, the space $W^{1,r}(\Omega)$ is a Banach algebra.
Hence, applying these properties implies that
\begin{equation}
\begin{alignedat}{2}
\|(\bm{u}\cdot\nabla \widetilde{n})(t,\,\cdot\,)\|_{L^r(\Omega)} &\le C\|\bm{u}(t,\,\cdot\,)\|_{L^r(\Omega)}\|\nabla \widetilde{n}(t,\,\cdot\,)\|_{L^{\infty}(\Omega)} &&\le C\|\bm{u}(t,\,\cdot\,)\|_{L^r(\Omega)}\|\widetilde{n}(t,\,\cdot\,)\|_{W^{2,r}(\Omega)}, \\
\|(\bm{u} \cdot \nabla \widetilde{c})(t,\,\cdot\,)\|_{W^{1,r}(\Omega)} &\le C\|\bm{u}(t,\,\cdot\,)\|_{W^{1,r}(\Omega)}\|\nabla \widetilde{c}(t,\,\cdot\,)\|_{W^{1,r}(\Omega)} &&\le C\|\bm{u}(t,\,\cdot\,)\|_{W^{1,r}(\Omega)}\|\widetilde{c}(t,\,\cdot\,)\|_{W^{2,r}(\Omega)}, \\
\|(\bm{u}\cdot\nabla )\bm{u}^*(t,\,\cdot\,)\|_{L^r(\Omega)} &\le C\|\bm{u}(t,\,\cdot\,)\|_{L^r(\Omega)}\|\nabla \bm{u}^*(t,\,\cdot\,)\|_{L^{\infty}(\Omega)} &&\le C\|\bm{u}(t,\,\cdot\,)\|_{L^r(\Omega)}\|\bm{u}^*(t,\,\cdot\,)\|_{W^{2,r}(\Omega)}, \\
\|\widetilde{n}(t,\,\cdot\,)\nabla \varphi\|_{L^r(\Omega)} &\le C\|\widetilde{n}(t,\,\cdot\,)\|_{L^{\infty}(\Omega)}\|\nabla \varphi\|_{L^r(\Omega)} &&\le C\|\widetilde{n}(t,\,\cdot\,)\|_{W^{1,r}(\Omega)}\|\nabla \varphi\|_{L^r(\Omega)}
\end{alignedat}
\end{equation}
for all $0<t<\infty$.
By taking the $L^q$-norm in time and applying Corollary \ref{ncuemb} (ii), we have
\begin{equation}
\begin{alignedat}{2}
\|\bm{u}\cdot\nabla \widetilde{n}\|_{L^q(L^r(\Omega))} &\le C\|\bm{u}\|_{L^{\infty}(L^r(\Omega))}\|\widetilde{n}\|_{L^q(W^{2,r}(\Omega))} &&\le C\|\bm{u}\|_{\mathbb{E}_{q,r}^0}\|\widetilde{n}\|_{\mathbb{E}_{q,r}^0}, \\
\|\bm{u} \cdot \nabla \widetilde{c}\|_{L^q(W^{1,r}(\Omega))} &\le C\|\bm{u}\|_{L^{\infty}(W^{1,r}(\Omega))}\|\widetilde{c}\|_{L^q(W^{2,r}(\Omega))} &&\le C\|\bm{u}\|_{\mathbb{E}_{q,r}^0}\|\widetilde{c}\|_{\mathbb{E}_{q,r}^1}, \\
\|(\bm{u}\cdot\nabla)\bm{u}^*\|_{L^q(L^r(\Omega))} &\le C\|\bm{u}\|_{L^{\infty}(L^r(\Omega))}\|\bm{u}^*\|_{L^q(W^{2,r}(\Omega))} &&\le C\|\bm{u}\|_{\mathbb{E}_{q,r}^0}\|\bm{u}^*\|_{\mathbb{E}_{q,r}^0}, \\
\|\widetilde{n}\nabla \varphi\|_{L^q(L^r(\Omega))} &\le C\|\widetilde{n}\|_{L^q(W^{1,r}(\Omega))}\|\nabla \varphi\|_{L^r(\Omega)} &&\le C\|\widetilde{n}\|_{\mathbb{E}_{q,r}^0}\|\nabla \varphi\|_{L^r(\Omega)}.
\end{alignedat}
\end{equation}

\noindent
\textbf{(ii)} Since the space $W^{1,r}(\Omega)$ is a Banach algebra, we obtain
\begin{equation}
\|((\widetilde{n}+\alpha)S\nabla \widetilde{c})(t,\,\cdot\,)\|_{W^{1,r}(\Omega)} \le C(\|\widetilde{n}(t,\,\cdot\,)\|_{W^{1,r}(\Omega)}+|\alpha|)\|S(t,\,\cdot\,)\|_{W^{1,r}(\Omega)}\|\widetilde{c}(t,\,\cdot\,)\|_{W^{2,r}(\Omega)}
\end{equation}
for all $0<t<\infty$.
Thus we see by Corollary \ref{ncuemb} (ii) that
\begin{equation}
\begin{aligned}
\|(\widetilde{n}+\alpha)S\nabla \widetilde{c}\|_{L^q(W^{1,r}(\Omega))} &\le C(\|\widetilde{n}\|_{L^{\infty}(W^{1,r}(\Omega))}+|\alpha|)\|S\|_{L^{\infty}(W^{1,r}(\Omega))}\|\widetilde{c}\|_{L^q(W^{2,r}(\Omega))} \\
&\le C\|S\|_{L^{\infty}(W^{1,r}(\Omega))}(\|\widetilde{n}\|_{\mathbb{E}_{q,r}^0}+|\alpha|)\|\widetilde{c}\|_{\mathbb{E}_{q,r}^1}.
\end{aligned}
\end{equation}
Moreover, there hold
\begin{equation}
\begin{aligned}
\|((\partial_t(\widetilde{n}+\alpha))S\nabla \widetilde{c})(t,\,\cdot\,)\|_{L^r(\Omega)} &\le C\|\partial_t\widetilde{n}(t,\,\cdot\,)\|_{L^r(\Omega)}\|S(t,\,\cdot\,)\|_{W^{1,r}(\Omega)}\|\widetilde{c}(t,\,\cdot\,)\|_{W^{2,r}(\Omega)}, \\
\|((\widetilde{n}+\alpha)(\partial_tS)\nabla \widetilde{c})(t,\,\cdot\,)\|_{L^r(\Omega)} &\le C(\|\widetilde{n}(t,\,\cdot\,)\|_{W^{1,r}(\Omega)}+|\alpha|)\|\partial_tS(t,\,\cdot\,)\|_{L^r(\Omega)}\|\widetilde{c}(t,\,\cdot\,)\|_{W^{2,r}(\Omega)}, \\
\|((\widetilde{n}+\alpha)S\partial_t\nabla \widetilde{c})(t,\,\cdot\,)\|_{L^r(\Omega)} &\le C(\|\widetilde{n}(t,\,\cdot\,)\|_{W^{1,r}(\Omega)}+|\alpha|)\|S(t,\,\cdot\,)\|_{W^{1,r}(\Omega)}\|\partial_t\widetilde{c}(t,\,\cdot\,)\|_{W^{1,r}(\Omega)}
\end{aligned}
\end{equation}
for all $0<t<\infty$, which yield
\begin{equation}
\begin{aligned}
\|(\partial_t(\widetilde{n}+\alpha))S\nabla \widetilde{c}\|_{L^q(L^r(\Omega))} &\le C\|\widetilde{n}\|_{W^{1,q}(L^r(\Omega))}\|S\|_{L^{\infty}(W^{1,r}(\Omega))}\|\widetilde{c}\|_{L^{\infty}(W^{2,r}(\Omega))}, \\
\|(\widetilde{n}+\alpha)(\partial_tS)\nabla \widetilde{c}\|_{L^q(L^r(\Omega))} &\le C(\|\widetilde{n}\|_{L^{\infty}(W^{1,r}(\Omega))}+|\alpha|)\|\partial_tS\|_{L^q(L^r(\Omega))}\|\widetilde{c}\|_{L^{\infty}(W^{2,r}(\Omega))}, \\
\|(\widetilde{n}+\alpha)S\partial_t\nabla \widetilde{c}\|_{L^q(L^r(\Omega))} &\le C(\|\widetilde{n}\|_{L^{\infty}(W^{1,r}(\Omega))}+|\alpha|)\|S\|_{L^{\infty}(W^{1,r}(\Omega))}\|\widetilde{c}\|_{W^{1,q}(W^{1,r}(\Omega))}.
\end{aligned}
\end{equation}
Hence it holds by Corollary \ref{ncuemb} (ii) that
\begin{equation}
\|\partial_t((\widetilde{n}+\alpha)S\nabla \widetilde{c})\|_{L^q(L^r(\Omega))} \le C(\|S\|_{L^{\infty}(W^{1,r}(\Omega))}+\|\partial_tS\|_{L^q(L^r(\Omega))})(\|\widetilde{n}\|_{\mathbb{E}_{q,r}^0}+|\alpha|)\|\widetilde{c}\|_{\mathbb{E}_{q,r}^1},
\end{equation}
which gives the desired estimate.
\end{proof}

In the following, we shall prove Theorem \ref{globalsol}.
We fix $0<\lambda_1<\min\{1,\lambda_{\mathrm{N}}(\Omega)/q\}$ and $\lambda_2 \in [0,\lambda_1] \cap [0,\lambda_{\mathrm{D}}(\Omega)/q)$ and introduce the function space
$$\mathbb{E} \coloneqq \left\{(\widetilde{n},\widetilde{c},\bm{u}) \, \left| \, e^{\lambda_1 t}\widetilde{n} \in \mathbb{E}_{q,r}^0(-\Delta_{\mathrm{N}}), \, e^{\lambda_1 t}\widetilde{c} \in \mathbb{E}_{q,r}^1(1-\Delta_{\mathrm{N},1}), \, e^{\lambda_2 t}\bm{u} \in \mathbb{E}_{q,r}^0(A) \right.\right\}$$
with the norm
\begin{equation}
\begin{aligned}
\|v\|_{\mathbb{E}} &\coloneqq \|e^{\lambda_1 t}\widetilde{n}\|_{\mathbb{E}_{q,r}^0}+\|e^{\lambda_1 t}\widetilde{c}\|_{\mathbb{E}_{q,r}^1}+\|e^{\lambda_2 t}\bm{u}\|_{\mathbb{E}_{q,r}^0} \\
&=\|e^{\lambda_1 t}\widetilde{n}\|_{L^q(W^{2,r}(\Omega)) \cap W^{1,q}(L^r(\Omega))}+\|e^{\lambda_1 t}\widetilde{c}\|_{L^q(W^{3,r}(\Omega)) \cap W^{1,q}(W^{1,r}(\Omega))} \\
&\quad +\|e^{\lambda_2 t}\bm{u}\|_{L^q(W^{2,r}(\Omega)) \cap W^{1,q}(L^r(\Omega))},
\end{aligned}
\end{equation}
where $v \coloneqq (\widetilde{n},\widetilde{c},\bm{u})$ and $\lambda_{\mathrm{N}}(\Omega),\lambda_{\mathrm{D}}(\Omega) \in (0,\infty)$ are given by \eqref{lamdef}.
In addition, we also set
\begin{equation}\label{ininorm}
\llbracket (n_0,c_0,\bm{u}_0,\bm{f}) \rrbracket \coloneqq \|n_0\|_{B_{r,q}^{2-2/q}(\Omega)}+\|c_0\|_{B_{r,q}^{3-2/q}(\Omega)}+\|\bm{u}_0\|_{B_{r,q}^{2-2/q}(\Omega)}+\|e^{\lambda_2 t}\bm{f}\|_{L^q(L^r(\Omega))}.
\end{equation}
Then Corollary \ref{ncuemb} (i) yields
\begin{equation}\label{n0varest}
|\overline{n}_0|+\|n_0-\overline{n}_0\|_{B_{r,q}^{2-2/q}(\Omega)} \le C\llbracket (n_0,c_0,\bm{u}_0,\bm{f}) \rrbracket.
\end{equation}

Let $v \coloneqq (\widetilde{n},\widetilde{c},\bm{u}) \in \mathbb{E}$ be arbitrary.
Then, Lemma \ref{nonlest} and the estimate \eqref{n0varest} imply that
\begin{equation}
\begin{aligned}
e^{\lambda_1 t}(\widetilde{n}+\overline{n}_0)S\nabla \widetilde{c} &\in L^q(0,\infty;W^{1,r}(\Omega)^N) \cap W^{1,q}(0,\infty;L^r(\Omega)^N), \\
e^{\lambda_1 t}\bm{u}\cdot\nabla\widetilde{n} &\in L^q(0,\infty;L^r(\Omega))
\end{aligned}
\end{equation}
with the estimates
\begin{equation}\label{1nonlest}
\left\{\begin{aligned}
\|e^{\lambda_1 t}(\widetilde{n}+\overline{n}_0)S\nabla \widetilde{c}\|_{L^q(W^{1,r}(\Omega)) \cap W^{1,q}(L^r(\Omega))} &\le CM_S(\|\widetilde{n}\|_{\mathbb{E}_{q,r}^0}+|\overline{n}_0|)\|e^{\lambda_1 t}\widetilde{c}\|_{\mathbb{E}_{q,r}^1} \\
&\le CM_S(\|v\|_{\mathbb{E}}+\llbracket (n_0,c_0,\bm{u}_0,\bm{f}) \rrbracket)\|v\|_{\mathbb{E}}, \\
\|e^{\lambda_1 t}\bm{u}\cdot\nabla\widetilde{n}\|_{L^q(L^r(\Omega))} &\le C\|\bm{u}\|_{\mathbb{E}_{q,r}^0}\|e^{\lambda_1 t}\widetilde{n}\|_{\mathbb{E}_{q,r}^0} \le C\|v\|_{\mathbb{E}}^2.
\end{aligned}\right.
\end{equation}
In addition, we see that $\int_{\Omega}(n_0(x)-\overline{n}_0)\,dx=0$ and that Proposition \ref{gaussdiv} yields $\int_{\Omega}(\bm{u}\cdot\nabla \widetilde{n})(t,x)\,dx=0$ for all $0<t<\infty$, and thus we may apply Theorem \ref{maxneu0} to obtain a unique global strong solution $\Phi_1v$ to
\begin{equation}
\left\{\begin{aligned}
\partial_t\Phi_1v-\Delta \Phi_1v &= -\nabla \cdot ((\widetilde{n}+\overline{n}_0)S\nabla \widetilde{c})-\bm{u}\cdot\nabla\widetilde{n} && \text{in $(0,\infty) \times \Omega$}, \\
\nabla \Phi_1v \cdot \bm{\nu} &=(\widetilde{n}+\overline{n}_0)S\nabla \widetilde{c} \cdot \bm{\nu} && \text{in $(0,\infty) \times \partial\Omega$}, \\
(\Phi_1v) (0,\,\cdot\,) &= n_0-\overline{n}_0 && \text{in $\Omega$}
\end{aligned}\right.
\end{equation}
such that $e^{\lambda_1t}\Phi_1v \in \mathbb{E}_{q,r}^0(-\Delta_{\mathrm{N}})$.
The boundary condition is satisfied in the sense that
\begin{equation}\label{PNBC}
(\gamma(\nabla \Phi_1v(t,\,\cdot\,)-(\widetilde{n}(t,\,\cdot\,)+\overline{n}_0)S(t,\,\cdot\,)\nabla \widetilde{c}(t,\,\cdot\,))(x)) \cdot \bm{\nu}(x)=0
\end{equation}
for a.e.~$(t,x) \in (0,\infty) \times \partial\Omega$.
Moreover, it holds that
\begin{equation}\label{phi1est}
\begin{aligned}
\|e^{\lambda_1 t}\Phi_1v\|_{\mathbb{E}_{q,r}^0} &\le C\left(\|n_0-\overline{n}_0\|_{B_{r,q}^{2-2/q}(\Omega)}+\|e^{\lambda_1 t}\bm{u}\cdot\nabla\widetilde{n}\|_{L^q(L^r(\Omega))}\right. \\
&\left.\quad{}+\|e^{\lambda_1 t}(\widetilde{n}+\overline{n}_0)S\nabla \widetilde{c}\|_{L^q(W^{1,r}(\Omega)) \cap W^{1,q}(L^r(\Omega))}\right) \\
&\le C\llbracket (n_0,c_0,\bm{u}_0,\bm{f}) \rrbracket+CM_S(\|v\|_{\mathbb{E}}+\llbracket (n_0,c_0,\bm{u}_0,\bm{f}) \rrbracket)\|v\|_{\mathbb{E}}
\end{aligned}
\end{equation}
with the aid of \eqref{n0varest} and \eqref{1nonlest}.
Next, since $1-\lambda_1>0$, by combining Lemmas \ref{maxheatstokes} (i) and \ref{nonlest} (i), we observe that there exists a unique global strong solution $\widetilde{\Phi}_2v \in \mathbb{E}_{q,r}^1(1-\Delta_{\mathrm{N},1})$ to
\begin{equation}
\left\{\begin{aligned}
\partial_t\widetilde{\Phi}_2v+(1-\lambda_1-\Delta_{\mathrm{N},1})\widetilde{\Phi}_2v &= e^{\lambda_1 t}\widetilde{n}-e^{\lambda_1 t}\bm{u} \cdot \nabla \widetilde{c} && \text{in $(0,\infty) \times \Omega$}, \\
(\widetilde{\Phi}_2v)(0,\,\cdot\,) &=c_0 && \text{in $\Omega$}.
\end{aligned}\right.
\end{equation}
In addition, it holds by the estimates in Lemma \ref{nonlest} (i) that
\begin{equation}
\begin{aligned}
\|\widetilde{\Phi}_2v\|_{\mathbb{E}_{q,r}^1} &\le C\left(\|c_0\|_{B_{r,q}^{3-2/q}(\Omega)}+\|e^{\lambda_1 t}\widetilde{n}\|_{L^q(W^{1,r}(\Omega))}+\|e^{\lambda_1 t}\bm{u} \cdot \nabla \widetilde{c}\|_{L^q(W^{1,r}(\Omega))} \right) \\
&\le C\llbracket (n_0,c_0,\bm{u}_0,\bm{f}) \rrbracket+C\|v\|_{\mathbb{E}}^2,
\end{aligned}
\end{equation}
where we have used the notation introduced in \eqref{ininorm}.
Hence, the function $\Phi_2v \coloneqq e^{-\lambda_1 t}\widetilde{\Phi}_2v$ solves
\begin{equation}
\left\{\begin{aligned}
\partial_t\Phi_2v+(1-\Delta_{\mathrm{N},1})\Phi_2v &= \widetilde{n}-\bm{u} \cdot \nabla \widetilde{c} && \text{in $(0,\infty) \times \Omega$}, \\
(\Phi_2v)(0,\,\cdot\,) &=c_0 && \text{in $\Omega$}
\end{aligned}\right.
\end{equation}
and satisfies
\begin{equation}
\|e^{\lambda_1 t}\Phi_2v\|_{\mathbb{E}_{q,r}^1} \le C\llbracket (n_0,c_0,\bm{u}_0,\bm{f}) \rrbracket+C\|v\|_{\mathbb{E}}^2.
\end{equation}
By applying Lemma \ref{maxheatstokes} (ii) with the aid of Lemma \ref{nonlest} (i), we may construct a unique global strong solution $\bm{\Phi}_3v \in \mathbb{E}_{q,r}^0(A)$ to
\begin{equation}
\left\{\begin{aligned}
\partial_t\bm{\Phi}_3v+A\bm{\Phi}_3v &= -P(\bm{u}\cdot\nabla)\bm{u}+P((\Phi_1v)\nabla \varphi)+P\bm{f} && \text{in $(0,\infty) \times \Omega$}, \\
(\bm{\Phi}_3v)(0,\,\cdot\,) &= \bm{u}_0 && \text{in $\Omega$}.
\end{aligned}\right.
\end{equation}
Moreover, since $\lambda_1 \ge \lambda_2$, it holds by the estimates in Lemma \ref{nonlest} (i) 
and the estimate \eqref{phi1est} that
\begin{equation}
\begin{aligned}
&\|e^{\lambda_2 t}\bm{\Phi}_3v\|_{\mathbb{E}_{q,r}^0} \\
&\le C\left(\|\bm{u}_0\|_{B_{r,q}^{2-2/q}(\Omega)}+\|e^{\lambda_2 t}(\bm{u} \cdot \nabla)\bm{u}\|_{L^q(L^r(\Omega))}+\|e^{\lambda_2 t}(\Phi_1v)\nabla\varphi\|_{L^q(L^r(\Omega))}+\|e^{\lambda_2 t}\bm{f}\|_{L^q(L^r(\Omega))}\right) \\
&\le C\left(\llbracket (n_0,c_0,\bm{u}_0,\bm{f}) \rrbracket+\|v\|_{\mathbb{E}}^2+\|\nabla\varphi\|_{L^r(\Omega)}\|e^{\lambda_1 t}\Phi_1v\|_{\mathbb{E}_{q,r}^0}\right) \\
&\le C(1+\|\nabla\varphi\|_{L^r(\Omega)})\llbracket (n_0,c_0,\bm{u}_0,\bm{f}) \rrbracket +CM_S(1+\|\nabla\varphi\|_{L^r(\Omega)})(\|v\|_{\mathbb{E}}+\llbracket (n_0,c_0,\bm{u}_0,\bm{f}) \rrbracket)\|v\|_{\mathbb{E}}.
\end{aligned}
\end{equation}
Hence, by combining the estimates of $\Phi_1$, $\Phi_2$, and $\bm{\Phi}_3$, we have
\begin{equation}\label{phiest}
\begin{aligned}
\|\Phi v\|_{\mathbb{E}} &\le C(1+\|\nabla\varphi\|_{L^r(\Omega)})\llbracket (n_0,c_0,\bm{u}_0,\bm{f}) \rrbracket \\
&\quad +CM_S(1+\|\nabla\varphi\|_{L^r(\Omega)})(\|v\|_{\mathbb{E}}+\llbracket (n_0,c_0,\bm{u}_0,\bm{f}) \rrbracket)\|v\|_{\mathbb{E}},
\end{aligned}
\end{equation}
where $\Phi v \coloneqq (\Phi_1v,\Phi_2v,\bm{\Phi}_3v) \in \mathbb{E}$.
In addition, by summarizing the aforementioned discussions, we also observe that for every $v \in \mathbb{E}$, there exists a unique $\Phi v \coloneqq (\Phi_1v,\Phi_2v,\bm{\Phi}_3v) \in \mathbb{E}$ such that
\begin{equation}\label{phisol}
\left\{\begin{aligned}
\partial_t\Phi_1v-\Delta \Phi_1v &= -\nabla \cdot ((\widetilde{n}+\overline{n}_0)S\nabla \widetilde{c})-\bm{u}\cdot\nabla\widetilde{n} && \text{in $(0,\infty) \times \Omega$}, \\
\partial_t\Phi_2v+(1-\Delta_{\mathrm{N},1}) \Phi_2v &=\widetilde{n}-\bm{u} \cdot \nabla \widetilde{c} && \text{in $(0,\infty) \times \Omega$}, \\
\partial_t\bm{\Phi}_3v+A\bm{\Phi}_3v &= -P(\bm{u}\cdot\nabla)\bm{u}+P((\Phi_1v)\nabla \varphi)+P\bm{f} && \text{in $(0,\infty) \times \Omega$}, \\
\nabla\Phi_1v \cdot \bm{\nu} &= (\widetilde{n}+\overline{n}_0)S\nabla \widetilde{c} \cdot \bm{\nu} && \text{in $(0,\infty) \times \partial\Omega$}, \\
(\Phi_1v,\Phi_2v,\bm{\Phi}_3v)(0,\,\cdot\,) &= (n_0-\overline{n}_0,c_0,\bm{u}_0) && \text{in $\Omega$}.
\end{aligned}\right.
\end{equation}

Since it holds that
\begin{equation}
\begin{aligned}
\nabla \cdot ((\widetilde{n}+\overline{n}_0)S\nabla \widetilde{c})-\nabla \cdot ((\widetilde{n}^*+\overline{n}_0)S\nabla \widetilde{c}^*) &= \nabla \cdot ((\widetilde{n}-\widetilde{n}^*)S\nabla \widetilde{c})+\nabla \cdot ((\widetilde{n}^*+\overline{n}_0)S\nabla (\widetilde{c}-\widetilde{c}^*)), \\
(\widetilde{n}+\overline{n}_0)S\nabla \widetilde{c}\cdot\bm{\nu}-(\widetilde{n}^*+\overline{n}_0)S\nabla \widetilde{c}^*\cdot\bm{\nu} &= (\widetilde{n}-\widetilde{n}^*)S\nabla \widetilde{c} \cdot \bm{\nu}+(\widetilde{n}^*+\overline{n}_0)S\nabla (\widetilde{c}-\widetilde{c}^*) \cdot \bm{\nu}, \\
\bm{u}\cdot\nabla\widetilde{n}-\bm{u}^*\cdot\nabla\widetilde{n}^* &= \bm{u}\cdot\nabla (\widetilde{n}-\widetilde{n}^*)+(\bm{u}-\bm{u}^*)\cdot\nabla\widetilde{n}^*, \\
\widetilde{n}-\bm{u}\cdot\nabla \widetilde{c}-\widetilde{n}^*+\bm{u}^*\cdot\nabla \widetilde{c}^* &= \widetilde{n}-\widetilde{n}^*+(\bm{u}-\bm{u}^*) \cdot \nabla \widetilde{c}+\bm{u}^* \cdot \nabla (\widetilde{c}-\widetilde{c}^*), \\
(\bm{u}\cdot\nabla)\bm{u}-(\bm{u}^*\cdot\nabla)\bm{u}^* &= ((\bm{u}-\bm{u}^*)\cdot\nabla)\bm{u}+(\bm{u}^*\cdot\nabla)(\bm{u}-\bm{u}^*), \\
(\Phi_1v)\nabla \varphi-(\Phi_1v^*)\nabla \varphi &= (\Phi_1v-\Phi_1v^*)\nabla \varphi,
\end{aligned}
\end{equation}
in a similar manner to the aforementioned argument, we observe that
\begin{equation}
\begin{aligned}
\|e^{\lambda_1 t}(\Phi_1v-\Phi_1v^*)\|_{\mathbb{E}_{q,r}^0} &\le CM_S(\|v\|_{\mathbb{E}}+\|v^*\|_{\mathbb{E}}+\llbracket (n_0,c_0,\bm{u}_0,\bm{f}) \rrbracket)\|v-v^*\|_{\mathbb{E}}, \\
\|e^{\lambda_1 t}(\Phi_2v-\Phi_2v^*)\|_{\mathbb{E}_{q,r}^1} &\le C(\|v\|_{\mathbb{E}}+\|v^*\|_{\mathbb{E}})\|v-v^*\|_{\mathbb{E}}, \\
\|e^{\lambda_2 t}(\bm{\Phi}_3v-\bm{\Phi}_3v^*)\|_{\mathbb{E}_{q,r}^0} &\le C(\|v\|_{\mathbb{E}}+\|v^*\|_{\mathbb{E}})\|v-v^*\|_{\mathbb{E}}+C\|e^{\lambda_1 t}(\Phi_1v-\Phi_1v^*)\|_{\mathbb{E}_{q,r}^0}\|\nabla\varphi\|_{L^r(\Omega)} \\
&\le CM_S(1+\|\nabla\varphi\|_{L^r(\Omega)})(\|v\|_{\mathbb{E}}+\|v^*\|_{\mathbb{E}}+\llbracket (n_0,c_0,\bm{u}_0,\bm{f}) \rrbracket)\|v-v^*\|_{\mathbb{E}}
\end{aligned}
\end{equation}
for all $v,v^* \in \mathbb{E}$, which yield
\begin{equation}\label{phisubest}
\|\Phi v-\Phi v^*\|_{\mathbb{E}} \le CM_S(1+\|\nabla\varphi\|_{L^r(\Omega)})(\|v\|_{\mathbb{E}}+\|v^*\|_{\mathbb{E}}+\llbracket (n_0,c_0,\bm{u}_0,\bm{f}) \rrbracket)\|v-v^*\|_{\mathbb{E}}.
\end{equation}
Here, it should be emphasized that the aforementioned constant $C$ is independent of $S$ and $\varphi$.
Now we assume that $n_0$, $c_0$, $\bm{u}_0$, $\bm{f}$, and $\varphi$ satisfy
\begin{equation}\label{givensmall}
\llbracket (n_0,c_0,\bm{u}_0,\bm{f}) \rrbracket \le \frac{1}{8C^2M_S(1+\|\nabla\varphi\|_{L^r(\Omega)})^2}.
\end{equation}
We also introduce the closed subspace $\mathscr{B}(\mathbb{E})$ of $\mathbb{E}$ by setting
\begin{equation}\label{bedef}
\mathscr{B}(\mathbb{E}) \coloneqq \left\{ v \in \mathbb{E} \, \left| \, \|v\|_{\mathbb{E}} \le \frac{1}{4CM_S(1+\|\nabla\varphi\|_{L^r(\Omega)})} \right.\right\}.
\end{equation}
Here, the uniqueness of $\Phi v \in \mathbb{E}$ ensures that we may define the mapping $\Phi : \mathscr{B}(\mathbb{E}) \to \mathbb{E}$ by taking the unique solution $\Phi v \coloneqq (\Phi_1v,\Phi_2v,\bm{\Phi}_3v) \in \mathbb{E}$ to \eqref{phisol} for given $v \in \mathscr{B}(\mathbb{E})$.
Let us take $v,v^* \in \mathscr{B}(\mathbb{E})$.
Then we see by \eqref{phiest} and \eqref{phisubest} that
\begin{equation}
\begin{aligned}
\|\Phi v\|_{\mathbb{E}} &\le C(1+\|\nabla\varphi\|_{L^r(\Omega)}) \cdot \frac{1}{8C^2M_S(1+\|\nabla\varphi\|_{L^r(\Omega)})^2}+CM_S(1+\|\nabla\varphi\|_{L^r(\Omega)}) \\
&\quad \times \left\{ \frac{1}{4CM_S(1+\|\nabla\varphi\|_{L^r(\Omega)})}+\frac{1}{8C^2M_S(1+\|\nabla\varphi\|_{L^r(\Omega)})^2} \right\} \cdot \frac{1}{4CM_S(1+\|\nabla\varphi\|_{L^r(\Omega)})} \\
&\le \frac{1}{4CM_S(1+\|\nabla\varphi\|_{L^r(\Omega)})}
\end{aligned}
\end{equation}
and
\begin{equation}
\begin{aligned}
&\|\Phi v-\Phi v^*\|_{\mathbb{E}} \\
&\le CM_S(1+\|\nabla\varphi\|_{L^r(\Omega)}) \left\{\frac{1}{2CM_S(1+\|\nabla\varphi\|_{L^r(\Omega)})}+\frac{1}{8C^2M_S(1+\|\nabla\varphi\|_{L^r(\Omega)})^2}\right\}\|v-v^*\|_{\mathbb{E}} \\
&\le \frac{5}{8}\|v-v^*\|_{\mathbb{E}},
\end{aligned}
\end{equation}
which yield $\Phi v \in \mathscr{B}(\mathbb{E})$ for all $v \in \mathscr{B}(\mathbb{E})$.
Hence, the Banach fixed point theorem ensures the existence of a unique $v \in \mathscr{B}(\mathbb{E})$ such that $\Phi v=v$, i.e., $\Phi_1v=\widetilde{n}$, $\Phi_2v=\widetilde{c}$, and $\bm{\Phi}_3v=\bm{u}$.
Consequently, we may obtain a unique global strong solution $v=(\widetilde{n},\widetilde{c},\bm{u}) \in \mathscr{B}(\mathbb{E})$ to \eqref{shift} from the definition \eqref{phisol} of the mapping $\Phi$.

Noting that $v \in \mathscr{B}(\mathbb{E})$, we see by \eqref{phiest} that
\begin{equation}
\begin{aligned}
\|v\|_{\mathbb{E}} &\le C(1+\|\nabla\varphi\|_{L^r(\Omega)})\llbracket (n_0,c_0,\bm{u}_0,\bm{f}) \rrbracket \\
&\quad +CM_S(1+\|\nabla\varphi\|_{L^r(\Omega)})\left\{ \frac{1}{4CM_S(1+\|\nabla\varphi\|_{L^r(\Omega)})}+\frac{1}{8C^2M_S(1+\|\nabla\varphi\|_{L^r(\Omega)})^2} \right\}\|v\|_{\mathbb{E}} \\
&\le C(1+\|\nabla\varphi\|_{L^r(\Omega)})\llbracket (n_0,c_0,\bm{u}_0,\bm{f}) \rrbracket+\frac{3}{8}\|v\|_{\mathbb{E}},
\end{aligned}
\end{equation}
which yields
$$\|v\|_{\mathbb{E}} \le C(1+\|\nabla\varphi\|_{L^r(\Omega)})\llbracket (n_0,c_0,\bm{u}_0,\bm{f}) \rrbracket.$$
Concerning the Lipschitz continuity of the solution mapping, we suppose that
\begin{equation}\label{givenstarsmall}
\llbracket (n_0^*,c_0^*,\bm{u}_0^*,\bm{f}^*) \rrbracket \le \frac{1}{8C^2M_S(1+\|\nabla\varphi\|_{L^r(\Omega)})^2}
\end{equation}
like \eqref{givensmall} and also suppose that $v^*=(\widetilde{n}^*,\widetilde{c}^*,\bm{u}^*) \in \mathscr{B}(\mathbb{E})$ is a global solution to \eqref{shift} with the given datum $(n_0-\overline{n}_0,c_0,\bm{u}_0,\bm{f})$ replaced by $(n_0^*-\overline{n}_0^*,c_0^*,\bm{u}_0^*,\bm{f}^*)$.
Then, similarly to the derivation of the estimate \eqref{phisubest}, we deduce that
\begin{equation}
\begin{aligned}
\|e^{\lambda_1 t}(\widetilde{n}-\widetilde{n}^*)\|_{\mathbb{E}_{q,r}^0} &\le C\llbracket (n_0-n_0^*,c_0-c_0^*,\bm{u}_0-\bm{u}_0^*,\bm{f}-\bm{f}^*) \rrbracket \\
&\quad +CM_S(\|v\|_{\mathbb{E}}+\|v^*\|_{\mathbb{E}}+\llbracket (n_0,c_0,\bm{u}_0,\bm{f}) \rrbracket+\llbracket (n_0^*,c_0^*,\bm{u}_0^*,\bm{f}^*) \rrbracket)\|v-v^*\|_{\mathbb{E}}, \\
\|e^{\lambda_1 t}(\widetilde{c}-\widetilde{c}^*)\|_{\mathbb{E}_{q,r}^1} &\le C\llbracket (n_0-n_0^*,c_0-c_0^*,\bm{u}_0-\bm{u}_0^*,\bm{f}-\bm{f}^*) \rrbracket+C(\|v\|_{\mathbb{E}}+\|v^*\|_{\mathbb{E}})\|v-v^*\|_{\mathbb{E}},
\end{aligned}
\end{equation}
and
\begin{equation}
\begin{aligned}
&\|e^{\lambda_2 t}(\bm{u}-\bm{u}^*)\|_{\mathbb{E}_{q,r}^0} \\
&\le C\llbracket (n_0-n_0^*,c_0-c_0^*,\bm{u}_0-\bm{u}_0^*,\bm{f}-\bm{f}^*) \rrbracket \\
&\quad +C(\|v\|_{\mathbb{E}}+\|v^*\|_{\mathbb{E}})\|v-v^*\|_{\mathbb{E}}+C\|e^{\lambda_1 t}(\widetilde{n}-\widetilde{n}^*)\|_{\mathbb{E}_{q,r}^0}\|\nabla\varphi\|_{L^r(\Omega)} \\
&\le C(1+\|\nabla\varphi\|_{L^r(\Omega)})\llbracket (n_0-n_0^*,c_0-c_0^*,\bm{u}_0-\bm{u}_0^*,\bm{f}-\bm{f}^*) \rrbracket \\
&\quad +CM_S(1+\|\nabla\varphi\|_{L^r(\Omega)})(\|v\|_{\mathbb{E}}+\|v^*\|_{\mathbb{E}}+\llbracket (n_0,c_0,\bm{u}_0,\bm{f}) \rrbracket+\llbracket (n_0^*,c_0^*,\bm{u}_0^*,\bm{f}^*) \rrbracket)\|v-v^*\|_{\mathbb{E}}.
\end{aligned}
\end{equation}
Since $v,v^* \in \mathscr{B}(\mathbb{E})$, we see by \eqref{givensmall} and \eqref{givenstarsmall} that
\begin{equation}
\begin{aligned}
&\|v-v^*\|_{\mathbb{E}} \\
&\le C(1+\|\nabla\varphi\|_{L^r(\Omega)})\llbracket (n_0-n_0^*,c_0-c_0^*,\bm{u}_0-\bm{u}_0^*,\bm{f}-\bm{f}^*) \rrbracket \\
&\quad +CM_S(1+\|\nabla\varphi\|_{L^r(\Omega)})(\|v\|_{\mathbb{E}}+\|v^*\|_{\mathbb{E}}+\llbracket (n_0,c_0,\bm{u}_0,\bm{f}) \rrbracket+\llbracket (n_0^*,c_0^*,\bm{u}_0^*,\bm{f}^*) \rrbracket)\|v-v^*\|_{\mathbb{E}} \\
&\le C(1+\|\nabla\varphi\|_{L^r(\Omega)})\llbracket (n_0-n_0^*,c_0-c_0^*,\bm{u}_0-\bm{u}_0^*,\bm{f}-\bm{f}^*) \rrbracket \\
&\quad +CM_S(1+\|\nabla\varphi\|_{L^r(\Omega)})\left\{\frac{1}{2CM_S(1+\|\nabla\varphi\|_{L^r(\Omega)})}+\frac{1}{4C^2M_S(1+\|\nabla\varphi\|_{L^r(\Omega)})^2}\right\}\|v-v^*\|_{\mathbb{E}} \\
&\le C(1+\|\nabla\varphi\|_{L^r(\Omega)})\llbracket (n_0-n_0^*,c_0-c_0^*,\bm{u}_0-\bm{u}_0^*,\bm{f}-\bm{f}^*) \rrbracket+\frac{3}{4}\|v-v^*\|_{\mathbb{E}},
\end{aligned}
\end{equation}
which implies that
$$\|v-v^*\|_{\mathbb{E}} \le C(1+\|\nabla\varphi\|_{L^r(\Omega)})\llbracket (n_0-n_0^*,c_0-c_0^*,\bm{u}_0-\bm{u}_0^*,\bm{f}-\bm{f}^*) \rrbracket.$$

Finally, since our solution $v=(\widetilde{n},\widetilde{c},\bm{u}) \in \mathscr{B}(\mathbb{E})$ satisfies \eqref{shift}, by setting
$$n(t,x) \coloneqq \widetilde{n}(t,x)+\overline{n}_0, \quad c(t,x) \coloneqq \widetilde{c}(t,x)+(1-e^{-t})\overline{n}_0$$
for $(t,x) \in (0,\infty) \times \Omega$, we observe that $(n,c,\bm{u})$ satisfies System \eqref{ABS} with the properties \eqref{solclass}, \eqref{solest}, and \eqref{sollip}.
The validity of the nonlinear boundary condition \eqref{VNBC} follows from \eqref{PNBC}.
This completes the proof of Theorem \ref{globalsol}.

\section{Smoothness and non-negativity of global solutions}
\label{sec-5}

In this section, we show Theorem \ref{solreg}.
We first verify that the global strong solution $(n,c,\bm{u})$ to System \eqref{ABS} has higher regularities so that $(n,c,\bm{u})$ satisfies \eqref{ABS} in a classical sense, provided that $\varphi$, $\bm{f}$, and $S$ belong to better spaces such that \eqref{givenreg1} and \eqref{givenreg2}.
By virtue of the smoothness of the solution, we also obtain the non-negativity of $n$ and $c$.
To this end, we go back to the solution $(\widetilde{n},\widetilde{c},\bm{u})$ to \eqref{shift} and investigate these regularities.

\subsection{Regularities of $\bm{u}$}

Here, let us give the regularities of $\bm{u}$, which may be achieved via the classical semigroup theory.
Precisely speaking, we will make use of the mapping properties of the Stokes semigroup $e^{-tA}$ in the Besov spaces.
In our argument, the boundedness of the Helmholtz projection $P$ in the Besov spaces is necessary.
We will give the proof in the appendix for the convenience of the readers.

\begin{prop}\label{besovstokes}
Let $1<r_*<\infty$ and $0<s<\infty$. Then the following statements hold:

\begin{enumerate}
\item The Helmholtz projection operator $P$ is bounded from $B_{r_*,\infty}^s(\Omega)^N$ onto itself.

\item Let $0<s_0<\infty$. Then for every $\bm{\psi} \in L^{r_*}(\Omega)^N$, it holds that $e^{-tA}P\bm{\psi} \in B_{r_*,1}^{s_0}(\Omega)^N$ for all $0<t<\infty$ with the estimate
$$\|e^{-tA}P\bm{\psi}\|_{B_{r_*,1}^{s_0}(\Omega)} \le Ct^{-s_0/2}\|\bm{\psi}\|_{L^{r_*}(\Omega)},$$
where $C>0$ is a constant independent of $t$ and $\bm{\psi}$.
In addition, if $\bm{\psi} \in B_{r_*,\infty}^s(\Omega)^N$, then there holds $e^{-tA}P\bm{\psi} \in B_{r_*,1}^{s+s_0}(\Omega)^N$ with
$$\|e^{-tA}P\bm{\psi}\|_{B_{r_*,1}^{s+s_0}(\Omega)} \le Ct^{-s_0/2}\|\bm{\psi}\|_{B_{r_*,\infty}^s(\Omega)}.$$
\end{enumerate}
\end{prop}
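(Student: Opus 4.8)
The plan is to derive part~(i) from the $L^{r_*}$- and $W^{k,r_*}$-mapping properties of the Helmholtz projection together with real interpolation, and part~(ii) from the analyticity and exponential decay of the Stokes semigroup combined with a $K$-functional computation that produces exactly the claimed time weight. For part~(i): on a bounded $C^\infty$ domain $P$ is bounded on $L^{r_*}(\Omega)^N$ for $1<r_*<\infty$, and in the decomposition $\bm\psi=P\bm\psi+\nabla p$ the pressure solves $\Delta p=\nabla\cdot\bm\psi$ in $\Omega$, $\partial_{\bm\nu}p=\bm\psi\cdot\bm\nu$ on $\partial\Omega$; the standard $L^{r_*}$-theory of the Helmholtz decomposition together with elliptic regularity for this Neumann problem gives $\|\nabla p\|_{W^{k,r_*}(\Omega)}\le C\|\bm\psi\|_{W^{k,r_*}(\Omega)}$ for every $k\in\mathbb N_0$ (using $\partial\Omega\in C^\infty$), so $P$ is bounded on $W^{k,r_*}(\Omega)^N$ for every $k$. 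Fixing $k\in\mathbb N$ with $k>s$ and recalling $B^s_{r_*,\infty}(\Omega)^N=(L^{r_*}(\Omega)^N,W^{k,r_*}(\Omega)^N)_{s/k,\infty}$, the boundedness of $P$ on $B^s_{r_*,\infty}(\Omega)^N$ follows by interpolating the two endpoint estimates, and since $P^2=P$ the restriction is again a projection.

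For part~(ii), recall from the resolvent and decay properties of the Stokes operator used earlier (cf.\ \cite{amann}) that $-A$ generates a bounded analytic $C_0$-semigroup on $L^{r_*}_\sigma(\Omega)$ with $\|e^{-tA}\|_{\mathcal L(L^{r_*}_\sigma)}\le Ce^{-\delta t}$ for some $\delta>0$; combining this with analyticity yields $\|A^m e^{-tA}\|_{\mathcal L(L^{r_*}_\sigma)}\le Ct^{-m}$ for all $t>0$ and $m\in\mathbb N$ (for $t\ge1$ one absorbs the polynomial factor into the exponential). Using the known embedding $D(A^m)\hookrightarrow W^{2m,r_*}(\Omega)^N$ one then gets, for any $m$ with $2m>s_0$, the bounds $\|e^{-tA}P\bm\psi\|_{W^{2m,r_*}(\Omega)}\le Ct^{-m}\|\bm\psi\|_{L^{r_*}(\Omega)}$ and $\|e^{-tA}P\bm\psi\|_{L^{r_*}(\Omega)}\le C\|\bm\psi\|_{L^{r_*}(\Omega)}$. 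Writing $B^{s_0}_{r_*,1}(\Omega)^N=(L^{r_*}(\Omega)^N,W^{2m,r_*}(\Omega)^N)_{s_0/(2m),1}$ and estimating the $K$-functional of $\bm w:=e^{-tA}P\bm\psi$ by the trivial decompositions $\bm w=\bm w+0$ and $\bm w=0+\bm w$ gives $K(\xi,\bm w)\le C\|\bm\psi\|_{L^{r_*}}\min\{1,\xi t^{-m}\}$; inserting this into $\int_0^\infty\xi^{-s_0/(2m)}K(\xi,\bm w)\,d\xi/\xi$ and splitting the integral at $\xi=t^{m}$ gives $Ct^{-s_0/2}\|\bm\psi\|_{L^{r_*}}$, which is the first assertion.

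For the second assertion I would upgrade the first one on the data side. By part~(i), $P\bm\psi\in B^s_{r_*,\infty}(\Omega)^N=(L^{r_*}(\Omega)^N,W^{K,r_*}(\Omega)^N)_{s/K,\infty}$ with $K>s+s_0$, so for each $\rho>0$ there is a splitting $P\bm\psi=P\bm a_\rho+P\bm b_\rho$ with $\|\bm a_\rho\|_{L^{r_*}}+\rho\|\bm b_\rho\|_{W^{K,r_*}}\le C\rho^{s/K}\|\bm\psi\|_{B^s_{r_*,\infty}}$; applying the first assertion to $e^{-tA}P\bm a_\rho$ and the corresponding higher-order smoothing of $e^{-tA}P$ to the regular piece $e^{-tA}P\bm b_\rho$ (for which the natural scale is a fractional-power scale $D(A^\gamma)$ rather than $W^{K,r_*}$) and optimizing in $\rho$ yields $\|e^{-tA}P\bm\psi\|_{B^{s+s_0}_{r_*,1}}\le Ct^{-s_0/2}\|\bm\psi\|_{B^s_{r_*,\infty}}$; equivalently one may run the $K$-functional argument directly with the interpolation couple $\bigl(B^s_{r_*,\infty}(\Omega)^N,D(A^N)\bigr)$ and the reiteration theorem. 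The step I expect to be the main obstacle is exactly this handling of the Stokes semigroup on the Besov scale: identifying the real interpolation spaces of $L^{r_*}_\sigma(\Omega)$ and $D(A^m)$ with solenoidal Besov (or Bessel-potential) spaces and controlling $e^{-tA}P$ between them with the sharp time weight, since the no-slip boundary condition enters and must be tracked carefully; by contrast, the elementary $K$-functional bookkeeping needed to land in the $\ell^1$-target $B^{\,\cdot}_{r_*,1}(\Omega)$ with exponent exactly $t^{-s_0/2}$ rather than $t^{-s_0/2-\varepsilon}$ is a comparatively minor point that the above decomposition is designed to handle.
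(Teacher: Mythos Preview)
Your argument for (i) and for the first estimate in (ii) is correct and coincides with the paper's: the paper cites Giga--Miyakawa for the $W^{k,r_*}$-boundedness of $P$ and Cattabriga for $D(A^k)\hookrightarrow W^{2k,r_*}$, then interpolates exactly as you do (your explicit $K$-functional computation is just the content of the interpolation inequality invoked from \cite{lunardi}).

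For the second estimate in (ii), your outline is workable but the obstacle you single out is a detour the paper avoids entirely. The paper never identifies $(L^{r_*}_\sigma,D(A^m))_{\theta,q}$ with a solenoidal Besov space. The missing ingredient is simply the \emph{uniform-in-$t$} bound of $e^{-tA}P$ on $W^{2k,r_*}(\Omega)^N$, which follows at once from the tools already in hand: using $A=-P\Delta$, the $L^{r_*}_\sigma$-boundedness of $e^{-tA}$, and iterating the $W^{j,r_*}$-boundedness of $P$ together with the trivial bound $\|\Delta w\|_{W^{j,r_*}}\le\|w\|_{W^{j+2,r_*}}$, one gets
\[
\|e^{-tA}P\bm\psi\|_{W^{2k,r_*}}\le C\|A^k e^{-tA}P\bm\psi\|_{L^{r_*}}\le C\|A^k P\bm\psi\|_{L^{r_*}}\le\cdots\le C\|P\bm\psi\|_{W^{2k,r_*}}\le C\|\bm\psi\|_{W^{2k,r_*}}.
\]
Interpolating this with the $L^{r_*}$-bound gives $\|e^{-tA}P\bm\psi\|_{B^s_{r_*,\infty}}\le C\|\bm\psi\|_{B^s_{r_*,\infty}}$, and interpolating the smoothing estimate on the \emph{input} side (between $L^{r_*}$ and $W^{2k,r_*}$) gives $\|e^{-tA}P\bm\psi\|_{W^{2k,r_*}}\le Ct^{-(2k-s)/2}\|\bm\psi\|_{B^s_{r_*,\infty}}$. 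The paper then concludes via reiteration $(B^s_{r_*,\infty},W^{2k,r_*})_{s_0/(2k-s),1}=B^{s+s_0}_{r_*,1}$ and the interpolation inequality. Your decomposition-and-optimize scheme would equally close once this uniform $W^{2k,r_*}$-bound is available (apply it to $\bm b_\rho$ and embed $W^{K,r_*}\subset B^{s+s_0}_{r_*,1}$); the pivot to $D(A^\gamma)$ and the boundary-condition bookkeeping you worry about are unnecessary.
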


By applying Proposition \ref{besovstokes}, we obtain the regularities of $\bm{u}$ as follows.

\begin{lemm}\label{ureg}
Assume that all assumptions in Theorem \ref{globalsol} are satisfied.
In addition, suppose \eqref{givenreg1} with $0<\theta_0<\min\{1,2-2/q-N/r\}$.
Then the function $\bm{u}$ obtained in Theorem \ref{globalsol} satisfies
$$\bm{u} \in BUC([0,\infty);C(\overline{\Omega})^N) \cap C((0,\infty);C^{2+\theta}(\overline{\Omega})^N) \cap C^1((0,\infty);C^{\theta}(\overline{\Omega})^N)$$
for all $0<\theta<\theta_0$.
\end{lemm}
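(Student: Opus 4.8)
The plan is to use the Stokes semigroup representation of $\bm u$ together with the smoothing properties recorded in Proposition~\ref{besovstokes} and a bootstrap argument. First I would write, for any fixed $t_0 > 0$, the mild formulation
\begin{equation}
\bm{u}(t,\cdot) = e^{-(t-t_0)A}\bm{u}(t_0,\cdot) + \int_{t_0}^t e^{-(t-\tau)A}P\bigl(-(\bm{u}\cdot\nabla)\bm{u} + n\nabla\varphi + \bm{f}\bigr)(\tau,\cdot)\,d\tau,
\end{equation}
which is justified because $\bm u$ is the strong solution in $\mathbb{E}^0_{q,r}(A)$ produced in Theorem~\ref{globalsol}. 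The $BUC([0,\infty);C(\overline\Omega)^N)$ membership is already contained in \eqref{solbound} of Remark~\ref{globalrema}~(i), so the work is to upgrade the spatial regularity on compact time intervals $[t_0,\infty)$, and then let $t_0 \downarrow 0$ to cover $(0,\infty)$.

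Next I would estimate each piece. From Theorem~\ref{globalsol} we know $n \in BUC([0,\infty);C(\overline\Omega))$, and by \eqref{givenreg1} $\nabla\varphi \in C^{\theta_0}(\overline\Omega)^N$ and $\bm f \in C((0,\infty);C^{\theta_0}(\overline\Omega)^N)$; also $\bm u, \nabla\bm u \in L^\infty_{\mathrm{loc}}((0,\infty);C(\overline\Omega)^N)$ because $\bm u \in \mathbb{E}^0_{q,r}$ with $r>N$ gives $\bm u \in L^q_{\mathrm{loc}}(W^{2,r}(\Omega)^N)$ and hence pointwise-in-time control in $C^{1+\theta'}(\overline\Omega)^N$ for $\theta' < 1 - N/r$ for a.e.\ $t$. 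Thus the forcing $g := -(\bm u\cdot\nabla)\bm u + n\nabla\varphi + \bm f$ lies in $L^\infty_{\mathrm{loc}}((0,\infty);L^{r}(\Omega)^N)$ to start with. Applying Proposition~\ref{besovstokes}~(ii) with $\bm\psi = g(\tau,\cdot) \in L^r(\Omega)^N$ gives $\|e^{-(t-\tau)A}Pg(\tau,\cdot)\|_{B^{s_0}_{r,1}(\Omega)} \le C(t-\tau)^{-s_0/2}\|g(\tau,\cdot)\|_{L^r(\Omega)}$, and the integrability of $(t-\tau)^{-s_0/2}$ near $\tau=t$ for $s_0<2$ together with the exponential decay of $e^{-tA}$ lets me conclude $\bm u \in C((t_0,\infty);B^{s_0}_{r,1}(\Omega)^N)$ for any $s_0 < 2$; by Proposition~\ref{besovemb} and $r>N$ this embeds into $C^{1+\theta}(\overline\Omega)^N$ for suitable $\theta$. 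Feeding this improved regularity back into $g$ — now $(\bm u\cdot\nabla)\bm u \in C((t_0,\infty);C^\theta(\overline\Omega)^N)$, $n\nabla\varphi \in C^\theta$ using $n\in C((0,\infty);C^\theta)$ from \eqref{ncureg} for $n$ (or, if one wants to avoid circularity, $n\in C(\overline\Omega)$ combined with $\nabla\varphi\in C^{\theta_0}$, which lands in $C^0$; a half-step bootstrap then suffices), and $\bm f\in C^{\theta_0}$ — I would apply the second assertion of Proposition~\ref{besovstokes}~(ii) with $\bm\psi\in B^{\theta}_{r,\infty}(\Omega)^N$ to gain $e^{-tA}Pg(\tau,\cdot)\in B^{\theta+s_0}_{r,1}(\Omega)^N$, reaching $\bm u \in C((t_0,\infty);B^{2+\theta}_{r,1}(\Omega)^N) \subset C((t_0,\infty);C^{2+\theta}(\overline\Omega)^N)$ after finitely many iterations (each step gains almost $2$ derivatives, capped by the integrability exponent $s_0<2$). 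The time-derivative regularity $\bm u \in C^1((t_0,\infty);C^\theta(\overline\Omega)^N)$ then follows directly from the equation $\partial_t\bm u = -A\bm u - P((\bm u\cdot\nabla)\bm u) + P(n\nabla\varphi) + P\bm f$, using $A\bm u = -P\Delta\bm u \in C((t_0,\infty);C^\theta(\overline\Omega)^N)$ and the boundedness of $P$ on $B^{\theta}_{r,\infty}(\Omega)^N$ from Proposition~\ref{besovstokes}~(i).

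The main obstacle I anticipate is the bookkeeping of the bootstrap: the Stokes smoothing only integrates a singularity $(t-\tau)^{-s_0/2}$ with $s_0 < 2$, so a single application never gains a full two derivatives, and one must carefully track that after finitely many steps the cumulative regularity exceeds $2+\theta_0$ while staying below the threshold imposed by $\theta_0 < \min\{1, 2-2/q-N/r\}$ (this last restriction is what guarantees $g(\tau,\cdot)$ genuinely lies in the Hölder/Besov spaces claimed, via the trace embedding $B^{2-2/q}_{r,q}(\Omega)\subset C^{1+\theta_0}(\overline\Omega)$ for $\bm u_0$ and hence for $\bm u(t_0,\cdot)$). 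A secondary technical point is handling the initial term $e^{-(t-t_0)A}\bm u(t_0,\cdot)$: since $\bm u(t_0,\cdot) \in B^{2-2/q}_{r,q}(\Omega)^N$ already has more than $1$ derivative in the Hölder scale, the second part of Proposition~\ref{besovstokes}~(ii) applied with $s = 2-2/q$ (and using $B^{2-2/q}_{r,q} \subset B^{2-2/q-\epsilon}_{r,\infty}$) immediately gives $e^{-(t-t_0)A}\bm u(t_0,\cdot) \in C^{2+\theta}(\overline\Omega)^N$ for $t>t_0$, so this term causes no trouble. Finally, letting $t_0\to 0$ is legitimate because all the above estimates on $(t_0,\infty)$ are uniform once $t$ is bounded away from $0$, and $t_0$ was arbitrary; combined with the already-known $BUC$ statement up to $t=0$, this yields the claimed regularity class.
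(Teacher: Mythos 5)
Your overall architecture is the same as the paper's: write $\bm{u}$ by the Duhamel formula for the Stokes semigroup starting from a positive time (the paper uses $t/2$, you use an arbitrary $t_0$, which is equivalent), estimate the forcing $-(\bm{u}\cdot\nabla)\bm{u}+n\nabla\varphi+\bm{f}$ first in $L^r$, gain almost two derivatives via Proposition \ref{besovstokes} (ii), embed by Proposition \ref{besovemb}, bootstrap with the forcing now measured in H\"older/Besov norms, and finally read off $\partial_t\bm{u}$ from the equation using the boundedness of $P$ in Besov spaces. The initial term is indeed harmless, exactly because the first part of Proposition \ref{besovstokes} (ii) smooths $L^r$ data to any Besov regularity, so your (incorrect) claim that $B_{r,q}^{2-2/q}(\Omega)\subset C^{1+\theta_0}(\overline{\Omega})$ is not needed there; note, however, that this embedding is false under the standing hypotheses, since it would require $1+\theta_0\le 2-2/q-N/r$, whereas only $\theta_0<2-2/q-N/r$ is assumed.

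The genuine gap is in your treatment of the term $n\nabla\varphi$ in the bootstrap. Your primary suggestion, to use $n\in C((0,\infty);C^{\theta}(K))$ from \eqref{ncureg}, is circular: that regularity is part of Theorem \ref{solreg}, whose proof in turn relies on the present lemma (the smoothing of $n$ and $c$ uses $\bm{u}\in C((0,\infty);C^{2}(\overline{\Omega})^N)$). Your fallback is not sufficient as stated: if $n\nabla\varphi$ is only known to lie in $C(\overline{\Omega})$, then every application of the Duhamel estimate is limited by the integrability of $(t-\tau)^{-s_0/2}$, i.e.\ $s_0<2$, so the spatial regularity of $\bm{u}$ saturates strictly below $C^{2}(\overline{\Omega})$ and the claimed $C^{2+\theta}(\overline{\Omega})$ cannot be reached by any number of iterations; to go beyond two derivatives the forcing must be H\"older continuous in space. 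The correct non-circular ingredient, which is what the paper uses, is that $\theta_0<2-2/q-N/r$ gives $B_{r,q}^{2-2/q}(\Omega)\subset C^{\theta_0}(\overline{\Omega})$, so Proposition \ref{traceemb} yields $\widetilde{n}\in BUC([0,\infty);C^{\theta_0}(\overline{\Omega}))$ (the paper's estimate \eqref{ntraceemb}); then $n\nabla\varphi\in C^{\theta_0}(\overline{\Omega})$ uniformly in time, and your bootstrap closes as intended. With this replacement (and the exponent in your embedding claim corrected to $\theta_0$), the rest of the argument goes through along the lines of the paper's proof.
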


\begin{proof}
First we note that the regularity $\bm{u} \in BUC([0,\infty);C(\overline{\Omega})^N)$ has been obtained from \eqref{solclass} and Corollary \ref{ncuemb} (ii).
Since Proposition \ref{besovemb} yields $B_{r,q}^{2-2/q}(\Omega) \subset C^{\theta_0}(\overline{\Omega})$, we see by Proposition \ref{traceemb} that $\widetilde{n} \in BUC([0,\infty);C^{\theta_0}(\overline{\Omega}))$ with the estimate
\begin{equation}\label{ntraceemb}
\|\widetilde{n}\|_{L^{\infty}(C^{\theta_0}(\overline{\Omega}))} \le C\|\widetilde{n}\|_{\mathbb{E}_{q,r}^0}.
\end{equation}
We rewrite the third equation of \eqref{shift} as an integral form:
\begin{equation}
\begin{aligned}
\bm{u}(t,\,\cdot\,) &= e^{-tA}\bm{u}_0-\int_0^te^{-(t-\tau)A}P((\bm{u}\cdot\nabla)\bm{u}-\widetilde{n}\nabla \varphi-\bm{f})(\tau,\,\cdot\,) \, d\tau \\
&= e^{-(t/2)A}P\bm{u}(t/2,\,\cdot\,)-\int_{t/2}^te^{-(t-\tau)A}P((\bm{u}\cdot\nabla)\bm{u}-\widetilde{n}\nabla \varphi-\bm{f})(\tau,\,\cdot\,) \, d\tau.
\end{aligned}
\end{equation}
Corollary \ref{ncuemb} (ii) and the estimate \eqref{ntraceemb} imply that
\begin{equation}
\begin{aligned}
&\|((\bm{u}\cdot\nabla)\bm{u}-\widetilde{n}\nabla \varphi-\bm{f})(\tau,\,\cdot\,)\|_{L^r(\Omega)} \\
&\le C\left(\|\bm{u}(\tau,\,\cdot\,)\|_{L^{\infty}(\Omega)}\|\bm{u}(\tau,\,\cdot\,)\|_{W^{1,r}(\Omega)}+\|\widetilde{n}(\tau,\,\cdot\,)\|_{L^{\infty}(\Omega)}\|\nabla\varphi\|_{L^r(\Omega)}+\|\bm{f}(\tau,\,\cdot\,)\|_{L^r(\Omega)}\right) \\
&\le C\left(\|\bm{u}\|_{\mathbb{E}_{q,r}^0}^2+\|\widetilde{n}\|_{\mathbb{E}_{q,r}^0}\|\varphi\|_{C^{1+\theta_0}(\overline{\Omega})}+\|\bm{f}(\tau,\,\cdot\,)\|_{C^{\theta_0}(\overline{\Omega})} \right)
\end{aligned}
\end{equation}
for all $0<\tau<\infty$.
Therefore, it holds by Proposition \ref{besovstokes} (ii) that
\begin{equation}
\begin{aligned}
&\|\bm{u}(t,\,\cdot\,)\|_{B_{r,1}^{2-\eta}(\Omega)} \\
&\le Ct^{-(2-\eta)/2}\|\bm{u}(t/2,\,\cdot\,)\|_{L^r(\Omega)}+C\int_{t/2}^t(t-\tau)^{-(2-\eta)/2}\|((\bm{u}\cdot\nabla)\bm{u}-\widetilde{n}\nabla \varphi-\bm{f})(\tau,\,\cdot\,)\|_{L^r(\Omega)} \, d\tau \\
&\le Ct^{-1+\eta/2}\|\bm{u}\|_{\mathbb{E}_{q,r}^0}+Ct^{\eta/2}\left(\|\bm{u}\|_{\mathbb{E}_{q,r}^0}^2+\|\widetilde{n}\|_{\mathbb{E}_{q,r}^0}\|\varphi\|_{C^{1+\theta_0}(\overline{\Omega})}+\sup_{t/2 \le \tau \le t}\|\bm{f}(\tau,\,\cdot\,)\|_{C^{\theta_0}(\overline{\Omega})} \right)
\end{aligned}
\end{equation}
for all $0<t<\infty$ and $0<\eta<1$.
Since $r>N$, by taking $\eta>0$ sufficiently small and applying Proposition \ref{besovemb}, we observe that
$$\bm{u} \in C((0,\infty);B_{r,1}^{2-\eta}(\Omega)^N) \subset C((0,\infty);C^{1+\theta_*}(\overline{\Omega})^N)$$
with some $0<\theta_* \le \theta_0$.
Moreover, similarly to the derivation of the above estimate, we see by \eqref{ntraceemb} that
\begin{equation}
\begin{aligned}
&\|((\bm{u}\cdot\nabla)\bm{u}-\widetilde{n}\nabla \varphi-\bm{f})(\tau,\,\cdot\,)\|_{B_{r_*,\infty}^{\theta_*}(\Omega)} \\
&\le C\|((\bm{u}\cdot\nabla)\bm{u}-\widetilde{n}\nabla \varphi-\bm{f})(\tau,\,\cdot\,)\|_{C^{\theta_*}(\overline{\Omega})} \\
&\le C\left(\|\bm{u}(\tau,\,\cdot\,)\|_{C^{\theta_*}(\overline{\Omega})}\|\nabla\bm{u}(\tau,\,\cdot\,)\|_{C^{\theta_*}(\overline{\Omega})}+\|\widetilde{n}(\tau,\,\cdot\,)\|_{C^{\theta_0}(\overline{\Omega})}\|\nabla\varphi\|_{C^{\theta_0}(\overline{\Omega})}+\|\bm{f}(\tau,\,\cdot\,)\|_{C^{\theta_0}(\overline{\Omega})} \right) \\
&\le C\left(\|\bm{u}(\tau,\,\cdot\,)\|_{C^{1+\theta_*}(\overline{\Omega})}^2+\|\widetilde{n}\|_{\mathbb{E}_{q,r}^0}\|\varphi\|_{C^{1+\theta_0}(\overline{\Omega})}+\|\bm{f}(\tau,\,\cdot\,)\|_{C^{\theta_0}(\overline{\Omega})} \right)
\end{aligned}
\end{equation}
for all $0<\tau<\infty$ and $1<r_*<\infty$.
Hence Corollary \ref{ncuemb} (ii) and Proposition \ref{besovstokes} (ii) yield
\begin{equation}
\begin{aligned}
&\|\bm{u}(t,\,\cdot\,)\|_{B_{r*,1}^{2+\theta_*-\eta}(\Omega)} \\
&\le Ct^{-(2+\theta_*-\eta)/2}\|\bm{u}(t/2,\,\cdot\,)\|_{L^{\infty}(\Omega)}+C\int_{t/2}^t(t-\tau)^{-(2-\eta)/2}\|((\bm{u}\cdot\nabla)\bm{u}-\widetilde{n}\nabla \varphi-\bm{f})(\tau,\,\cdot\,)\|_{B_{r_*,\infty}^{\theta_*}(\Omega)} \, d\tau \\
&\le Ct^{-1-\theta_*/2+\eta/2}\|\bm{u}\|_{\mathbb{E}_{q,r}^0}+Ct^{\eta/2}\sup_{t/2 \le \tau \le t}\left(\|\bm{u}(\tau,\,\cdot\,)\|_{C^{1+\theta_*}(\overline{\Omega})}^2+\|\widetilde{n}\|_{\mathbb{E}_{q,r}^0}\|\varphi\|_{C^{1+\theta_0}(\overline{\Omega})}+\|\bm{f}(\tau,\,\cdot\,)\|_{C^{\theta_0}(\overline{\Omega})}\right)
\end{aligned}
\end{equation}
for all $0<t<\infty$ and $0<\eta<1$.
Thus we have
$$\bm{u} \in C((0,\infty);B_{r_*,1}^{2+\theta_*-\eta}(\Omega)^N) \subset C((0,\infty);C^{1+\theta_0}(\overline{\Omega})^N)$$
by choosing $r_*$ sufficiently large and using Proposition \ref{besovemb}.
Repeating the above argument with $\theta_*$ replaced by $\theta_0$, we also see by Proposition \ref{besovemb} that
$$\bm{u} \in C((0,\infty);B_{r_*,1}^{2+\theta_0-\eta}(\Omega)^N) \subset C((0,\infty);C^{2+\theta_0-2\eta}(\overline{\Omega})^N)$$
for arbitrarily small $0<\eta<1$.
In addition, using the third equation of \eqref{shift} and Proposition \ref{besovstokes} (i), we obtain
\begin{equation}
\begin{aligned}
&\|\partial_t\bm{u}(t,\,\cdot\,)\|_{B_{r_*,\infty}^{\theta_0-2\eta}(\Omega)} \\
&\le C\left(\|\bm{u}(t,\,\cdot\,)\|_{C^{2+\theta_0-2\eta}(\overline{\Omega})}+\|P((\bm{u}\cdot\nabla)\bm{u}-\widetilde{n}\nabla \varphi-\bm{f})(t,\,\cdot\,)\|_{B_{r_*,\infty}^{\theta_0}(\Omega)} \right) \\
&\le C\left(\|\bm{u}(t,\,\cdot\,)\|_{C^{2+\theta_0-2\eta}(\overline{\Omega})}+\|\bm{u}(t,\,\cdot\,)\|_{C^{1+\theta_0}(\overline{\Omega})}^2+\|\widetilde{n}\|_{\mathbb{E}_{q,r}^0}\|\varphi\|_{C^{1+\theta_0}(\overline{\Omega})}+\|\bm{f}(t,\,\cdot\,)\|_{C^{\theta_0}(\overline{\Omega})} \right)
\end{aligned}
\end{equation}
for all $0<t<\infty$ and $0<\eta<\theta_0/2$.
Thus we have
$$\bm{u} \in C^1((0,\infty);B_{r_*,\infty}^{\theta_0-2\eta}(\Omega)^N) \subset C^1((0,\infty);C^{\theta_0-3\eta}(\overline{\Omega})^N)$$
due to Proposition \ref{besovemb}, which yields the desired result.
\end{proof}

\subsection{Regularities of $n$ and $c$}

It remains to show the smoothness of $n$ and $c$ with the aid of mapping properties of the heat semigroup $e^{t\Delta}$ on $\mathbb{R}^N$.
To this end, we shall use the cut-off technique and extensions of the functions to the whole space so that we may avoid considering the nonlinear boundary condition $\nabla \widetilde{n}\cdot\bm{\nu}=(\widetilde{n}+\overline{n}_0)S(t,x)\nabla \widetilde{c} \cdot \bm{\nu}$ on $\partial \Omega$, see \eqref{shift}.
Recall the cut-off function $\chi_{\delta} \in C_0^{\infty}(\Omega)$ defined by \eqref{chidef} and recall that there holds $\chi_{\delta}(x)\psi(x)=0$ for all $\psi \in C(\overline{\Omega})$ if $x \in \partial\Omega$.
Furthermore, we introduce the zero-extension operator $\mathcal{E}$ by setting
$$\mathcal{E}[\psi](x) \coloneqq \left\{\begin{array}{cl}
\psi(x) & \text{if $x \in \Omega$}, \\
0 & \text{if $x \in \mathbb{R}^N \setminus \Omega$}
\end{array}\right.$$
for $\psi \in C(\overline{\Omega})$.
Then it holds by \eqref{shift} that
\begin{equation}\label{cutsys}
\left\{\begin{aligned}
\partial_t\mathcal{E}[\chi_{\delta}\widetilde{n}]-\Delta\mathcal{E}[\chi_{\delta}\widetilde{n}] &= \mathcal{E}[\chi_{\delta}(\partial_t\widetilde{n}-\Delta\widetilde{n}) -2\nabla\chi_{\delta} \cdot \nabla\widetilde{n}-\widetilde{n}\Delta\chi_{\delta}] \\
&= -\mathcal{E}[\chi_{\delta}\nabla \cdot ((\widetilde{n}+\overline{n}_0)S\nabla \widetilde{c})+\chi_{\delta}(\bm{u} \cdot \nabla \widetilde{n})+2\nabla\chi_{\delta} \cdot \nabla\widetilde{n}+\widetilde{n}\Delta\chi_{\delta}], \\
\partial_t\mathcal{E}[\chi_{\delta}\widetilde{c}]+(1-\Delta)\mathcal{E}[\chi_{\delta}\widetilde{c}] &=\mathcal{E}[\chi_{\delta}(\partial_t\widetilde{c}+(1-\Delta)\widetilde{c})-2\nabla\chi_{\delta} \cdot \nabla\widetilde{c}-\widetilde{c}\Delta\chi_{\delta}] \\
&= \mathcal{E}[\chi_{\delta}\widetilde{n}-\chi_{\delta}(\bm{u} \cdot \nabla \widetilde{c})-2\nabla\chi_{\delta} \cdot \nabla\widetilde{c}-\widetilde{c}\Delta\chi_{\delta}]
\end{aligned}\right.
\end{equation}
in $(0,\infty) \times \mathbb{R}^N$.
Hence, setting the right-hand sides by $F_{\delta}$ and $G_{\delta}$, i.e.,
\begin{equation}\label{FG}
\left\{\begin{aligned}
F_{\delta}(t,x) &\coloneqq (\chi_{\delta}\nabla \cdot ((\widetilde{n}+\overline{n}_0)S\nabla \widetilde{c})+\chi_{\delta}(\bm{u} \cdot \nabla \widetilde{n})+2\nabla\chi_{\delta} \cdot \nabla\widetilde{n}+\widetilde{n}\Delta\chi_{\delta})(t,x), \\
G_{\delta}(t,x) &\coloneqq (\chi_{\delta}\widetilde{n}-\chi_{\delta}(\bm{u} \cdot \nabla \widetilde{c})-2\nabla\chi_{\delta} \cdot \nabla\widetilde{c}-\widetilde{c}\Delta\chi_{\delta})(t,x)
\end{aligned}\right.
\end{equation}
for $(t,x) \in (0,\infty) \times \Omega$, respectively, we have the following integral system
\begin{equation}\label{INT}
\left\{\begin{aligned}
\mathcal{E}[\chi_{\delta}\widetilde{n}(t,\,\cdot\,)] &= e^{t\Delta}\mathcal{E}[\chi_{\delta}(n_0-\overline{n}_0)] -\int_0^te^{(t-\tau)\Delta}\mathcal{E}[F_{\delta}(\tau,\,\cdot\,)] \, d\tau \\
&=e^{(t/2)\Delta}\mathcal{E}[\chi_{\delta}\widetilde{n}(t/2,\,\cdot\,)]-\int_{t/2}^te^{(t-\tau)\Delta}\mathcal{E}[F_{\delta}(\tau,\,\cdot\,)] \, d\tau, \\
\mathcal{E}[\chi_{\delta}\widetilde{c}(t,\,\cdot\,)] &= e^{-t}e^{t\Delta}\mathcal{E}[\chi_{\delta}c_0]+\int_0^te^{-(t-\tau)}e^{(t-\tau)\Delta}\mathcal{E}[G_{\delta}(\tau,\,\cdot\,)] \, d\tau \\
&= e^{-t/2}e^{(t/2)\Delta}\mathcal{E}[\chi_{\delta}\widetilde{c}(t/2,\,\cdot\,)]+\int_{t/2}^te^{-(t-\tau)}e^{(t-\tau)\Delta}\mathcal{E}[G_{\delta}(\tau,\,\cdot\,)] \, d\tau
\end{aligned}\right.
\end{equation}
for $0<t<\infty$, where $e^{t\Delta}$ denotes the heat semigroup defined by
$$(e^{t\Delta}\psi)(x) \coloneqq \frac{1}{(4\pi t)^{N/2}}\int_{\mathbb{R}^N}e^{-|x-y|^2/(4t)}\psi(y)\,dy, \quad (t,x) \in (0,\infty) \times \mathbb{R}^N$$
for the functions $\psi \in C(\mathbb{R}^N)$ having a compact support.
In what follows we show the regularities of $\widetilde{n}$ and $\widetilde{c}$ on an arbitrary compact subset $K \subset \Omega$ by using the integral form \eqref{INT}.
We begin with the estimates of the heat semigroup and the fractional Leibniz rule in Besov spaces, see, e.g., Kozono--Ogawa--Taniuchi \cite{kozonoogawataniuchi}*{Lemma 2.2 (i)}, and Chae \cite{chae}*{Lemma 2.2}.

\begin{prop}\label{besovest}
Let $1 \le r_* \le \infty$. Then the following statements hold:

\begin{enumerate}
\item Let $0<s_0 \le s_1<\infty$.
If $\psi \in L^{r_*}(\mathbb{R}^N)$, then it holds that $e^{t\Delta}\psi \in B_{r_*,\infty}^{s_1}(\mathbb{R}^N)$ for all $0<t<\infty$ with the estimate
$$\|e^{t\Delta}\psi\|_{B_{r_*,\infty}^{s_1}(\mathbb{R}^N)} \le C(1+t^{-s_1/2})\|\psi\|_{L^{r_*}(\mathbb{R}^N)},$$
where $C>0$ is a constant independent of $t$ and $\psi$.
In addition, if $\psi \in B_{r_*,\infty}^{s_0}(\mathbb{R}^N)$, then it holds that $e^{t\Delta}\psi \in B_{r_*,\infty}^{s_1}(\mathbb{R}^N)$ for all $0<t<\infty$ with the estimate
$$\|e^{t\Delta}\psi\|_{B_{r_*,\infty}^{s_1}(\mathbb{R}^N)} \le C(1+t^{-(s_1-s_0)/2})\|\psi\|_{B_{r_*,\infty}^{s_0}(\mathbb{R}^N)}.$$

\item Let $0<s<\infty$.
Assume that $\psi_0 \in B_{\infty,\infty}^s(\mathbb{R}^N)$ and $\psi_1 \in B_{r_*,\infty}^s(\mathbb{R}^N)$.
Then it holds that $\psi_0\psi_1 \in B_{r_*,\infty}^s(\mathbb{R}^N)$ with the estimate
$$\|\psi_0\psi_1\|_{B_{r_*,\infty}^s(\mathbb{R}^N)} \le C\left(\|\psi_0\|_{B_{\infty,\infty}^s(\mathbb{R}^N)}\|\psi_1\|_{L^{r_*}(\mathbb{R}^N)}+\|\psi_0\|_{L^{\infty}(\mathbb{R}^N)}\|\psi_1\|_{B_{r_*,\infty}^s(\mathbb{R}^N)}\right),$$
where $C>0$ is a constant independent of $\psi_0$ and $\psi_1$.
\end{enumerate}
\end{prop}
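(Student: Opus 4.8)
The plan is to establish both assertions by the standard Littlewood--Paley machinery on $\mathbb{R}^N$; since the statements are by now classical, one may alternatively simply invoke \cite{kozonoogawataniuchi}*{Lemma 2.2} and \cite{chae}*{Lemma 2.2} verbatim, but I sketch the self-contained argument. I first fix a dyadic partition of unity and recall that for $0<s<\infty$ the inhomogeneous Besov norm is equivalent to $\sup_{j}2^{js}\|\Delta_j\psi\|_{L^{r_*}(\mathbb{R}^N)}$, where $\Delta_j$ ($j\ge 0$) localizes to frequencies $|\xi|\sim 2^{j}$ and the block $\Delta_{-1}$ collects the low frequencies $|\xi|\lesssim 1$. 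The two elementary inputs I would use are the Bernstein-type bound $\|\Delta_j\psi\|_{L^{r_*}}\le C\|\psi\|_{L^{r_*}}$ uniformly in $j$, and the heat-kernel localization estimate $\|\Delta_j e^{t\Delta}\psi\|_{L^{r_*}}\le Ce^{-ct2^{2j}}\|\Delta_j\psi\|_{L^{r_*}}$ for $j\ge 0$ with some $c>0$ (proved by writing the convolution kernel of $\Delta_j e^{t\Delta}$ and applying Young's inequality to its rescaled Gaussian decay), together with $\|\Delta_{-1}e^{t\Delta}\psi\|_{L^{r_*}}\le C\|\psi\|_{L^{r_*}}$.

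For part (i), combining these inputs gives $2^{js_1}\|\Delta_j e^{t\Delta}\psi\|_{L^{r_*}}\le C\,2^{js_1}e^{-ct2^{2j}}\|\psi\|_{L^{r_*}}$ for $j\ge 0$ and $\le C\|\psi\|_{L^{r_*}}$ for $j=-1$; taking the supremum over $j$ and using the elementary bound $\sup_{x\ge 1}x^{s_1/2}e^{-ctx}\le C(1+t^{-s_1/2})$ yields the first estimate. The second estimate is identical after replacing $s_1$ by $s_1-s_0\ge 0$ and writing $2^{js_0}\|\Delta_j\psi\|_{L^{r_*}}\le\|\psi\|_{B_{r_*,\infty}^{s_0}(\mathbb{R}^N)}$ in the high-frequency blocks and $2^{-s_1}\le 1$ for $j=-1$.

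For part (ii), I would use Bony's decomposition $\psi_0\psi_1=T_{\psi_0}\psi_1+T_{\psi_1}\psi_0+R(\psi_0,\psi_1)$. The paraproduct $T_{\psi_0}\psi_1=\sum_j S_{j-1}\psi_0\,\Delta_j\psi_1$ is controlled in $B_{r_*,\infty}^s(\mathbb{R}^N)$ by $\|\psi_0\|_{L^\infty}\|\psi_1\|_{B_{r_*,\infty}^s(\mathbb{R}^N)}$, using $\|S_{j-1}\psi_0\|_{L^\infty}\le C\|\psi_0\|_{L^\infty}$ and the almost-orthogonality of frequency supports; the symmetric term $T_{\psi_1}\psi_0=\sum_j S_{j-1}\psi_1\,\Delta_j\psi_0$ is controlled by $\|\psi_0\|_{B_{\infty,\infty}^s(\mathbb{R}^N)}\|\psi_1\|_{L^{r_*}}$, using $\|S_{j-1}\psi_1\|_{L^{r_*}}\le C\|\psi_1\|_{L^{r_*}}$ and $\|\Delta_j\psi_0\|_{L^\infty}\le C2^{-js}\|\psi_0\|_{B_{\infty,\infty}^s(\mathbb{R}^N)}$; finally the remainder $R(\psi_0,\psi_1)=\sum_{|j-j'|\le 1}\Delta_j\psi_0\,\Delta_{j'}\psi_1$ converges and, since $2s>0$, lies in $B_{r_*,\infty}^{2s}(\mathbb{R}^N)\subset B_{r_*,\infty}^s(\mathbb{R}^N)$ with norm bounded by $\|\psi_0\|_{B_{\infty,\infty}^s(\mathbb{R}^N)}\|\psi_1\|_{B_{r_*,\infty}^s(\mathbb{R}^N)}$. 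Adding the three contributions gives the asserted product estimate.

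The main point requiring care is the low-frequency block in part (i): on the inhomogeneous scale $\Delta_{-1}e^{t\Delta}$ provides no smallness in $t$, which is exactly why the factor $1+t^{-s_1/2}$ (and not merely $t^{-s_1/2}$) appears; one has to keep $s_1>0$ — respectively $s_1\ge s_0$ — so that $2^{-s_1}\le 1$ and the low-frequency contribution is absorbed into the constant. The analogous delicate point in part (ii) is that the remainder series converges only because $s>0$, and this is the only place where the hypothesis $0<s$ is genuinely used.
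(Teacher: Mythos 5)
The paper does not actually prove this proposition: it is stated as a known fact with references to \cite{kozonoogawataniuchi}*{Lemma 2.2 (i)} and \cite{chae}*{Lemma 2.2}, so your self-contained Littlewood--Paley argument is necessarily a different (and more informative) route; it is in fact the standard proof underlying those cited lemmas. Part (i) is correct as written: the uniform bound $\|\Delta_j\psi\|_{L^{r_*}}\le C\|\psi\|_{L^{r_*}}$, the localization estimate $\|\Delta_je^{t\Delta}\psi\|_{L^{r_*}}\le Ce^{-ct2^{2j}}\|\Delta_j\psi\|_{L^{r_*}}$ for $j\ge 0$, and the separate treatment of the low-frequency block give exactly the factors $1+t^{-s_1/2}$ and $1+t^{-(s_1-s_0)/2}$, and your remark about why the additive constant $1$ is unavoidable on the inhomogeneous scale is the right observation. (Implicit here is the equivalence of the dyadic characterization with the interpolation definition of $B^s_{r_*,q}$ used in the paper, which is standard for $s>0$ on $\mathbb{R}^N$.)

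In part (ii) there is one imprecision worth fixing: you bound the remainder $R(\psi_0,\psi_1)$ by $\|\psi_0\|_{B^s_{\infty,\infty}}\|\psi_1\|_{B^s_{r_*,\infty}}$, but this bilinear quantity is \emph{not} dominated by the right-hand side of the asserted estimate (take $\psi_0=\psi_1$ with small sup-norm and large Besov norm), so summing your three contributions as written yields a weaker conclusion than the proposition claims. The repair stays entirely within your argument: for $j\ge k-N_0$ estimate each block of the remainder by
\begin{equation}
\|\Delta_j\psi_0\|_{L^\infty}\|\Delta_{j'}\psi_1\|_{L^{r_*}}\le C\,2^{-js}\|\psi_0\|_{B^s_{\infty,\infty}(\mathbb{R}^N)}\|\psi_1\|_{L^{r_*}(\mathbb{R}^N)},
\end{equation}
and sum the geometric series (convergent precisely because $s>0$, as you note) to get $2^{ks}\|\Delta_kR(\psi_0,\psi_1)\|_{L^{r_*}}\le C\|\psi_0\|_{B^s_{\infty,\infty}(\mathbb{R}^N)}\|\psi_1\|_{L^{r_*}(\mathbb{R}^N)}$, which is of the required mixed form; alternatively pair $\|\Delta_j\psi_0\|_{L^\infty}\le C\|\psi_0\|_{L^\infty}$ with the Besov norm of $\psi_1$. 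With this one-line change the proof of (ii) is complete, the paraproduct estimates for $T_{\psi_0}\psi_1$ and $T_{\psi_1}\psi_0$ being correct as you stated them.
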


We also verify the following estimate to deal with the term $\chi_{\delta}\nabla\psi$.

\begin{prop}\label{cutest}
Let $1 \le r_* \le \infty$ and $0<s<\infty$.
Suppose that $\psi \in B_{r_*,\infty}^{s+1}(K)$, where $K \subset \Omega$ is an arbitrary compact subset.
Then it holds that
\begin{equation}
\|\mathcal{E}[\chi_{\delta}\nabla\psi]\|_{B_{r_*,\infty}^s(\mathbb{R}^N)} \le C_{\delta}\|\mathcal{E}[\chi_{\delta/2}\psi]\|_{B_{r_*,\infty}^{s+1}(\mathbb{R}^N)}
\end{equation}
for arbitrarily small $0<\delta<1$, where $C_{\delta}>0$ is a constant independent of $\psi$.
\end{prop}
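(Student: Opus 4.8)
The plan is to exploit the nesting of the two cut-off functions. Since $\chi_{\delta/2}\equiv 1$ on an open neighbourhood of $\operatorname{supp}\chi_{\delta}$, multiplication by $\chi_{\delta/2}$ is ``invisible'' to $\chi_{\delta}$, so that $\chi_{\delta}\nabla\psi$ may be rewritten in terms of $\nabla(\chi_{\delta/2}\psi)$, the gradient of a function that is compactly supported strictly inside $\Omega$. Transplanting this to $\mathbb{R}^{N}$ via the zero extension $\mathcal{E}$ and invoking the pointwise-multiplier bound provided by Proposition~\ref{besovest}~(ii) then yields the estimate. Throughout we take $K$ large enough that $\operatorname{supp}\chi_{\delta/2}\subset K$ (so that $\chi_{\delta/2}\psi$, and hence $\chi_{\delta}\nabla\psi$, is well defined on $\Omega$), and we may assume the right-hand side of the asserted inequality is finite, i.e.\ $\mathcal{E}[\chi_{\delta/2}\psi]\in B^{s+1}_{r_*,\infty}(\mathbb{R}^{N})$.

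First I would record the algebraic identity
\begin{equation}
\chi_{\delta}\nabla\psi=\chi_{\delta}\nabla(\chi_{\delta/2}\psi)\qquad\text{on }\Omega .
\end{equation}
Indeed, by \eqref{chidef} the function $\chi_{\delta/2}$ equals $1$ on the open set $\{x:\inf_{y\in\partial\Omega}|x-y|>\delta/2\}$, which contains $\{\chi_{\delta}\neq0\}$; hence $\chi_{\delta}\chi_{\delta/2}=\chi_{\delta}$ and $\chi_{\delta}\nabla\chi_{\delta/2}\equiv0$ on $\Omega$, so that
\begin{equation}
\chi_{\delta}\nabla(\chi_{\delta/2}\psi)=\chi_{\delta}(\nabla\chi_{\delta/2})\psi+\chi_{\delta}\chi_{\delta/2}\nabla\psi=\chi_{\delta}\nabla\psi .
\end{equation}
Since restriction to the open set $\Omega$ commutes with taking distributional gradients, $(\nabla\mathcal{E}[\chi_{\delta/2}\psi])|_{\Omega}=\nabla(\chi_{\delta/2}\psi)$; combining this with the previous identity, and observing that both $\mathcal{E}[\chi_{\delta}\nabla\psi]$ and $\chi_{\delta}\nabla\mathcal{E}[\chi_{\delta/2}\psi]$ vanish off $\operatorname{supp}\chi_{\delta}\subset\subset\Omega$ (here $\chi_{\delta}$ is viewed as an element of $C_{0}^{\infty}(\mathbb{R}^{N})$), I obtain
\begin{equation}
\mathcal{E}[\chi_{\delta}\nabla\psi]=\chi_{\delta}\,\nabla\mathcal{E}[\chi_{\delta/2}\psi]\qquad\text{in }\mathbb{R}^{N} .
\end{equation}

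To conclude, I would apply Proposition~\ref{besovest}~(ii) with $\psi_{0}=\chi_{\delta}$ and $\psi_{1}=\nabla\mathcal{E}[\chi_{\delta/2}\psi]$. Here $\chi_{\delta}\in C_{0}^{\infty}(\mathbb{R}^{N})\subset B^{s}_{\infty,\infty}(\mathbb{R}^{N})$ with $\|\chi_{\delta}\|_{B^{s}_{\infty,\infty}(\mathbb{R}^{N})}+\|\chi_{\delta}\|_{L^{\infty}(\mathbb{R}^{N})}\le C_{\delta}$, while $\psi_{1}\in B^{s}_{r_*,\infty}(\mathbb{R}^{N})$ because differentiation maps $B^{s+1}_{r_*,\infty}(\mathbb{R}^{N})$ boundedly into $B^{s}_{r_*,\infty}(\mathbb{R}^{N})$; Proposition~\ref{besovest}~(ii) thus gives
\begin{equation}
\|\mathcal{E}[\chi_{\delta}\nabla\psi]\|_{B^{s}_{r_*,\infty}(\mathbb{R}^{N})}\le C\Big(\|\chi_{\delta}\|_{B^{s}_{\infty,\infty}(\mathbb{R}^{N})}\,\|\psi_{1}\|_{L^{r_*}(\mathbb{R}^{N})}+\|\chi_{\delta}\|_{L^{\infty}(\mathbb{R}^{N})}\,\|\psi_{1}\|_{B^{s}_{r_*,\infty}(\mathbb{R}^{N})}\Big).
\end{equation}
Since $s>0$ we have $B^{s}_{r_*,\infty}(\mathbb{R}^{N})\hookrightarrow L^{r_*}(\mathbb{R}^{N})$, whence $\|\psi_{1}\|_{L^{r_*}(\mathbb{R}^{N})}+\|\psi_{1}\|_{B^{s}_{r_*,\infty}(\mathbb{R}^{N})}\le C\|\mathcal{E}[\chi_{\delta/2}\psi]\|_{B^{s+1}_{r_*,\infty}(\mathbb{R}^{N})}$; absorbing the $\delta$-dependent constants into a single $C_{\delta}$ completes the proof. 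The only delicate point is the identity $\mathcal{E}[\chi_{\delta}\nabla\psi]=\chi_{\delta}\nabla\mathcal{E}[\chi_{\delta/2}\psi]$ — everything else being a routine combination of standard Besov-space facts — and it is precisely to make this identity hold (no spurious $\nabla\chi$ terms, and no boundary contribution from the extension, since $\chi_{\delta/2}\psi$ already vanishes near $\partial\Omega$) that the auxiliary cut-off $\chi_{\delta/2}$ is inserted.
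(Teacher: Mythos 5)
Your argument is correct and follows essentially the same route as the paper: the key identity $\chi_{\delta}\nabla\psi=\chi_{\delta}\nabla(\chi_{\delta/2}\psi)$ (valid because $\chi_{\delta/2}\equiv 1$ on the support of $\chi_{\delta}$), followed by an application of the fractional Leibniz rule in Proposition~\ref{besovest}~(ii) to $\mathcal{E}[\chi_{\delta}]\nabla\mathcal{E}[\chi_{\delta/2}\psi]$. The only difference is cosmetic: you bound the lower-order term via $B_{r_*,\infty}^{s}(\mathbb{R}^N)\hookrightarrow L^{r_*}(\mathbb{R}^N)$, while the paper routes it through $\|\mathcal{E}[\chi_{\delta/2}\psi]\|_{B_{r_*,1}^{1}(\mathbb{R}^N)}$, both of which are controlled by $\|\mathcal{E}[\chi_{\delta/2}\psi]\|_{B_{r_*,\infty}^{s+1}(\mathbb{R}^N)}$.
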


\begin{proof}
Since $\chi_{\delta/2}(x)=1$ for all $x \in \Omega$ such that $\inf_{y \in \partial\Omega}|x-y| \ge \delta/2$ from the definition \eqref{chidef}, we have
\begin{equation}\label{halfdelta}
\chi_{\delta}(x)\nabla\psi(x)=\chi_{\delta}(x)\nabla(\chi_{\delta/2}(x)\psi(x))
\end{equation}
for all $x \in \Omega$.
Hence, Proposition \ref{besovest} (ii) implies that
\begin{equation}
\begin{aligned}
\|\mathcal{E}[\chi_{\delta}\nabla\psi]\|_{B_{r_*,\infty}^s(\mathbb{R}^N)} &=\|\mathcal{E}[\chi_{\delta}\nabla(\chi_{\delta/2}\psi)]\|_{B_{r_*,\infty}^s(\mathbb{R}^N)} =\|\mathcal{E}[\chi_{\delta}]\nabla\mathcal{E}[\chi_{\delta/2}\psi]\|_{B_{r_*,\infty}^s(\mathbb{R}^N)} \\
&\le C\left(\|\chi_{\delta}\|_{B_{\infty,\infty}^s(\Omega)}\|\mathcal{E}[\chi_{\delta/2}\psi]\|_{B_{r_*,1}^1(\mathbb{R}^N)}+\|\chi_{\delta}\|_{L^{\infty}(\Omega)}\|\mathcal{E}[\chi_{\delta/2}\psi]\|_{B_{r_*,\infty}^{s+1}(\mathbb{R}^N)}\right) \\
&\le C_{\delta}\|\mathcal{E}[\chi_{\delta/2}\psi]\|_{B_{r_*,\infty}^{s+1}(\mathbb{R}^N)},
\end{aligned}
\end{equation}
which yields the desired result.
\end{proof}

Next we give the estimates of $F_{\delta}$ and $G_{\delta}$ defined by \eqref{FG}.

\begin{lemm}\label{FGest}
Assume that all assumptions in Theorem \ref{globalsol} are satisfied. Let $K \subset \Omega$ be an arbitrary compact subset. Then the following statements hold:
\begin{enumerate}
\item Let $r \le r_* \le \infty$, $0<\theta_0<\min\{1,2-2/q-N/r\}$, and $0<s \le \theta_0$.
Moreover, suppose that
\begin{equation}
\begin{aligned}
S &\in C((0,\infty);C^{1+\theta_0}(K)^{N^2}), & \bm{u} &\in C((0,\infty);C^{\theta_0}(K)^N), \\
\widetilde{n} &\in C((0,\infty);B_{r_*,\infty}^{s+1}(K)), & \widetilde{c} &\in C((0,\infty);B_{r_*,\infty}^{s+2}(K)).
\end{aligned}
\end{equation}
Then it holds that
\begin{equation}
\sup_{t/2 \le \tau \le t}\|\mathcal{E}[F_{\delta}(\tau,\,\cdot\,)]\|_{B_{r_*,\infty}^s(\mathbb{R}^N)} \le C_{\delta}M_{1,\delta}(r_*,s,t)
\end{equation}
for all $0<t<\infty$ and for arbitrarily small $0<\delta<1$, where
\begin{equation}\label{m1def}
\begin{aligned}
M_{1,\delta}(r_*,s,t) &\coloneqq \sup_{t/2 \le \tau \le t}\left\{\left(\|\chi_{\delta/2}\widetilde{n}(\tau,\,\cdot\,)\|_{B_{r_*,\infty}^{s+1}(\Omega)}+|\overline{n}_0|+1\right) \right. \\
&\quad\left.{}\times\left(\|\chi_{\delta/2}S(\tau,\,\cdot\,)\|_{C^{1+\theta_0}(\overline{\Omega})}\|\chi_{\delta/2}\widetilde{c}(\tau,\,\cdot\,)\|_{B_{r_*,\infty}^{s+2}(\Omega)}+\|\chi_{\delta}\bm{u}(\tau,\,\cdot\,)\|_{C^{\theta_0}(\overline{\Omega})}+1\right)\right\}
\end{aligned}
\end{equation}
and $C_{\delta}>0$ is a constant independent of $t$, $n_0$, $S$, $\bm{u}$, $\widetilde{n}$, and $\widetilde{c}$.

\item Let $N<r_* \le \infty$ and $0<s<\infty$.
Moreover, suppose that
\begin{equation}
\bm{u} \in C((0,\infty);B_{\infty,\infty}^{s+1}(K)^N), \quad \widetilde{n} \in C((0,\infty);B_{r_*,\infty}^{s+1}(K)), \quad \widetilde{c} \in C((0,\infty);B_{r_*,\infty}^{s+2}(K)).
\end{equation}
Then it holds that
\begin{equation}
\sup_{t/2 \le \tau \le t}\|\mathcal{E}[G_{\delta}(\tau,\,\cdot\,)]\|_{B_{r_*,\infty}^{s+1}(\mathbb{R}^N)} \le C_{\delta}M_{2,\delta}(r_*,s,t)
\end{equation}
for all $0<t<\infty$ and for arbitrarily small $0<\delta<1$, where
\begin{equation}\label{m2def}
\begin{aligned}
&M_{2,\delta}(r_*,s,t) \\
&\coloneqq \sup_{t/2 \le \tau \le t}\left\{\left(\|\chi_{\delta}\bm{u}(\tau,\,\cdot\,)\|_{B_{\infty,\infty}^{s+1}(\Omega)}+1\right)\|\chi_{\delta/2}\widetilde{c}(\tau,\,\cdot\,)\|_{B_{r_*,\infty}^{s+2}(\Omega)}+\|\chi_{\delta/2}\widetilde{n}(\tau,\,\cdot\,)\|_{B_{r_*,\infty}^{s+1}(\Omega)}\right\}
\end{aligned}
\end{equation}
and $C_{\delta}>0$ is a constant independent of $t$, $\bm{u}$, $\widetilde{n}$, and $\widetilde{c}$.
\end{enumerate}
\end{lemm}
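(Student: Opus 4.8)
The plan is to derive both estimates by expanding $F_\delta$ and $G_\delta$ from \eqref{FG} into a finite sum of products, localizing every spatial derivative by means of the identity behind Proposition \ref{cutest}, and then applying the fractional Leibniz rule of Proposition \ref{besovest} (ii) factor by factor. Three preliminary observations make the bookkeeping routine. First, any smooth compactly supported function (in particular $\chi_\delta$, $\nabla\chi_\delta$, $\Delta\chi_\delta$) lies in $B^s_{\infty,\infty}(\mathbb R^N)\cap L^\infty(\mathbb R^N)$, so by Proposition \ref{besovest} (ii) multiplication by it is bounded on $B^s_{r_*,\infty}(\mathbb R^N)$ with a constant $C_\delta$; moreover for a function compactly supported in $\Omega$ the $B^s_{r_*,\infty}(\Omega)$-norm and the $B^s_{r_*,\infty}(\mathbb R^N)$-norm of its zero extension are comparable, so the quantities in \eqref{m1def}--\eqref{m2def} may be read on $\mathbb R^N$. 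Second, $\chi_\delta\nabla\psi=\chi_\delta\nabla(\chi_{\delta/2}\psi)$ (used already in Proposition \ref{cutest}), while $\operatorname{supp}\nabla\chi_\delta$ and $\operatorname{supp}\Delta\chi_\delta$ are contained in $\{\chi_{\delta/2}\equiv1\}$ and are disjoint from $\operatorname{supp}\nabla\chi_{\delta/2}$; hence every term carrying derivatives can be rewritten with a $\chi_{\delta/2}$-localized argument before extending to $\mathbb R^N$. Third, since $r_*\ge r>N$, the Besov embedding $B^{a}_{r_*,\infty}(\mathbb R^N)\subset B^{a-N/r_*}_{\infty,\infty}(\mathbb R^N)\subset B^{b}_{\infty,\infty}(\mathbb R^N)$ holds whenever $b\le a-N/r_*$; combined with the identification $C^{1+\theta_0}=B^{1+\theta_0}_{\infty,\infty}$ and $s\le\theta_0$, this lets me push all but one factor of every product into an $L^\infty$-based slot.

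For (i), I would first expand the divergence by the Leibniz rule, obtaining that $\nabla\cdot((\widetilde n+\overline n_0)S\nabla\widetilde c)$ is a sum of three groups of terms carrying, respectively, one derivative on $\widetilde n$, one derivative on $S$, and two derivatives on $\widetilde c$, so that $\chi_\delta F_\delta$ becomes a finite sum of products whose factors are among $\chi_\delta\nabla\widetilde n$, $\chi_\delta\nabla^2\widetilde c$, $\chi_\delta\nabla\widetilde c$, $\chi_{\delta/2}\nabla S$, the bounded quantities $\chi_{\delta/2}\widetilde n+\overline n_0$ and $\chi_\delta\bm u$, and the lower-order leftovers $\nabla\chi_\delta\cdot\nabla\widetilde n$ and $\widetilde n\,\Delta\chi_\delta$. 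Using $\chi_\delta\nabla\psi=\chi_\delta\nabla(\chi_{\delta/2}\psi)$ and Proposition \ref{cutest} (once for the first-order terms, and for the second-order one by writing $\chi_\delta\nabla^2\widetilde c=\chi_\delta\nabla^2(\chi_{\delta/2}\widetilde c)$ and invoking Proposition \ref{besovest} (ii) with the smooth multiplier $\chi_\delta$) gives
\[
\|\mathcal E[\chi_\delta\nabla\widetilde n]\|_{B^s_{r_*,\infty}(\mathbb R^N)}\le C_\delta\|\chi_{\delta/2}\widetilde n\|_{B^{s+1}_{r_*,\infty}(\Omega)},\qquad
\|\mathcal E[\chi_\delta\nabla^{k}\widetilde c]\|_{B^s_{r_*,\infty}(\mathbb R^N)}\le C_\delta\|\chi_{\delta/2}\widetilde c\|_{B^{s+k}_{r_*,\infty}(\Omega)}\quad(k=1,2),
\]
while $\chi_{\delta/2}\nabla S\in B^s_{\infty,\infty}$ with norm $\le C_\delta\|\chi_{\delta/2}S\|_{C^{1+\theta_0}(\overline\Omega)}$, $\chi_\delta\bm u\in B^s_{\infty,\infty}$ with norm $\le C_\delta\|\chi_\delta\bm u\|_{C^{\theta_0}(\overline\Omega)}$, and $\chi_{\delta/2}\widetilde n\in B^{s+1}_{r_*,\infty}\subset B^s_{\infty,\infty}$ by the embedding above. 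In each product I keep exactly the factor carrying the most derivatives ($\chi_\delta\nabla^2\widetilde c$, $\chi_\delta\nabla\widetilde c$, or $\chi_\delta\nabla\widetilde n$) in the $B^s_{r_*,\infty}$-slot and all remaining factors in $B^s_{\infty,\infty}$-slots; iterating Proposition \ref{besovest} (ii) then bounds each product by one of the summands obtained from expanding the product $M_{1,\delta}(r_*,s,t)$ in \eqref{m1def}. Summing the finitely many contributions and taking $\sup_{t/2\le\tau\le t}$ yields the claim.

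For (ii) the same scheme applies and is lighter, since $G_\delta$ involves only $\widetilde n$, $\widetilde c$, $\nabla\widetilde c$, derivatives of $\chi_\delta$, and $\bm u$, and the hypotheses put $\widetilde n\in B^{s+1}_{r_*,\infty}(K)$, $\widetilde c\in B^{s+2}_{r_*,\infty}(K)$, $\bm u\in B^{s+1}_{\infty,\infty}(K)$, which match the target smoothness $s+1$. Writing $\chi_\delta\widetilde n=\chi_\delta(\chi_{\delta/2}\widetilde n)$, $\chi_\delta(\bm u\cdot\nabla\widetilde c)=(\chi_\delta\bm u)\cdot\mathcal E[\chi_\delta]\nabla\mathcal E[\chi_{\delta/2}\widetilde c]$, $\nabla\chi_\delta\cdot\nabla\widetilde c=\mathcal E[\nabla\chi_\delta]\cdot\nabla\mathcal E[\chi_{\delta/2}\widetilde c]$, and $\widetilde c\,\Delta\chi_\delta=(\chi_{\delta/2}\widetilde c)\,\Delta\chi_\delta$, then applying Proposition \ref{cutest} with $s$ replaced by $s+1$ to the $\nabla\widetilde c$ factor and Proposition \ref{besovest} (ii) to the products (placing $\chi_\delta\bm u$ in the $B^{s+1}_{\infty,\infty}$-slot of the $\bm u$–$\nabla\widetilde c$ term), one bounds $\|\mathcal E[G_\delta(\tau,\cdot)]\|_{B^{s+1}_{r_*,\infty}(\mathbb R^N)}$ by the summands of $M_{2,\delta}(r_*,s,t)$ in \eqref{m2def}. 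The one point needing real care — and the only genuine obstacle — is the nonlinear divergence term of $F_\delta$: it must be expanded correctly by Leibniz, each of the three resulting pieces must be $\chi_{\delta/2}$-localized so that Proposition \ref{cutest} and the smooth-multiplier property are applicable on $\mathbb R^N$, and the single $L^{r_*}$-based Besov slot must be assigned consistently in each product so that the final bound reproduces the structure of \eqref{m1def}; the embedding $B^{a}_{r_*,\infty}\subset B^{b}_{\infty,\infty}$ for $b\le a-N/r_*$, valid precisely because $r_*>N$, is what makes such a slot assignment possible.
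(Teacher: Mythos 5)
Your proposal is correct, and all the ingredients are the ones the paper uses (Proposition \ref{cutest}, the fractional Leibniz rule of Proposition \ref{besovest} (ii), the $\chi_{\delta}$/$\chi_{\delta/2}$ localization identity, and the embeddings coming from $r_*\ge r>N$ and $s\le\theta_0$); the terms $\chi_{\delta}(\bm{u}\cdot\nabla\widetilde{n})$, $2\nabla\chi_{\delta}\cdot\nabla\widetilde{n}+\widetilde{n}\Delta\chi_{\delta}$, and all of part (ii) are handled exactly as in the paper. The one genuine difference is the leading term of $F_{\delta}$: you expand $\nabla\cdot((\widetilde{n}+\overline{n}_0)S\nabla\widetilde{c})$ by the product rule and then assign, in each of the resulting products, one factor to the $B^s_{r_*,\infty}$-slot and the rest to $B^s_{\infty,\infty}$-slots via $B^{a}_{r_*,\infty}\subset B^{a-N/r_*}_{\infty,\infty}$, whereas the paper keeps the divergence intact, inserts $\chi_{\delta/2}$ inside it, applies Proposition \ref{cutest} once to trade the outer derivative for one unit of regularity, and then estimates the triple product $(\widetilde{n}+\overline{n}_0)\chi_{\delta/2}S\,\nabla(\chi_{\delta/2}\widetilde{c})$ directly in $B^{s+1}_{r_*,\infty}(\mathbb{R}^N)$, which is a Banach algebra precisely because $r_*>N$. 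The paper's route produces a single product of three norms and avoids your mixed-slot bookkeeping (which is where the delicate part of your argument lies), while your route is no less general since the same condition $r_*>N$ underlies both; note also that your rewriting of $\chi_{\delta}(\bm{u}\cdot\nabla\widetilde{c})$ carries a spurious extra factor of $\chi_{\delta}$ — the correct identity, used for $\widetilde{n}$ in the paper, is $\mathcal{E}[\chi_{\delta}\bm{u}]\cdot\nabla\mathcal{E}[\chi_{\delta/2}\widetilde{c}]$ — but this is a slip of notation, not of substance.
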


\begin{proof}
\textbf{(i)} Noting the identity \eqref{halfdelta}, we have a similar identity such that
\begin{align}
&\chi_{\delta}(x)\nabla\cdot ((\widetilde{n}(\tau,x)+\overline{n}_0)S(\tau,x)\nabla\widetilde{c}(\tau,x)) \\
&=\chi_{\delta}(x)\nabla\cdot \left\{(\widetilde{n}(\tau,x)+\overline{n}_0)\chi_{\delta/2}(x)S(\tau,x)\nabla(\chi_{\delta/2}(x)\widetilde{c}(\tau,x)) \right\}
\end{align}
for all $(\tau,x) \in (0,\infty) \times \Omega$.
Since $B_{r_*,\infty}^{s+1}(\mathbb{R}^N)$ is a Banach algebra due to $r_*>N$, we see by Proposition \ref{cutest} that
\begin{equation}
\begin{aligned}
&\|\mathcal{E}[\chi_{\delta}\nabla\cdot ((\widetilde{n}+\overline{n}_0)S\nabla\widetilde{c})](\tau,\,\cdot\,)\|_{B_{r_*,\infty}^s(\mathbb{R}^N)} \\
&=\|\mathcal{E}[\chi_{\delta}\nabla\cdot ((\widetilde{n}+\overline{n}_0)\chi_{\delta/2}S\nabla(\chi_{\delta/2}\widetilde{c}))](\tau,\,\cdot\,)\|_{B_{r_*,\infty}^s(\mathbb{R}^N)} \\
&\le C_{\delta}\|\mathcal{E}[\chi_{\delta/2}(\widetilde{n}+\overline{n}_0)\chi_{\delta/2}S\nabla(\chi_{\delta/2}\widetilde{c})](\tau,\,\cdot\,)\|_{B_{r_*,\infty}^{s+1}(\mathbb{R}^N)} \\
&\le C_{\delta}(\|\chi_{\delta/2}\widetilde{n}(\tau,\,\cdot\,)\|_{B_{r_*,\infty}^{s+1}(\Omega)}+|\overline{n}_0|)\|\chi_{\delta/2}S(\tau,\,\cdot\,)\|_{B_{r_*,\infty}^{s+1}(\Omega)}\|\chi_{\delta/2}\widetilde{c}(\tau,\,\cdot\,)\|_{B_{r_*,\infty}^{s+2}(\Omega)} \\
&\le C_{\delta}M_{1,\delta}(r_*,s,t)
\end{aligned}
\end{equation}
for all $0<t<\infty$ and $t/2 \le \tau \le t$.
Similarly to the derivation of the above estimate, it holds by Proposition \ref{besovest} (ii) that
\begin{equation}
\begin{aligned}
&\|\mathcal{E}[\chi_{\delta}(\bm{u}\cdot\nabla\widetilde{n})](\tau,\,\cdot\,)\|_{B_{r_*,\infty}^s(\mathbb{R}^N)} \\
&= \|\mathcal{E}[\chi_{\delta}\bm{u}(\tau,\,\cdot\,)]\cdot\nabla\mathcal{E}[\chi_{\delta/2}\widetilde{n}(\tau,\,\cdot\,)]\|_{B_{r_*,\infty}^s(\mathbb{R}^N)} \\
&\le C\left(\|\chi_{\delta}\bm{u}(\tau,\,\cdot\,)\|_{B_{\infty,\infty}^s(\Omega)}\|\chi_{\delta/2}\widetilde{n}(\tau,\,\cdot\,)\|_{B_{r_*,1}^1(\Omega)}+\|\chi_{\delta}\bm{u}(\tau,\,\cdot\,)\|_{L^{\infty}(\Omega)}\|\chi_{\delta/2}\widetilde{n}(\tau,\,\cdot\,)\|_{B_{r_*,\infty}^{s+1}(\Omega)}\right) \\
&\le CM_{1,\delta}(r_*,s,t)
\end{aligned}
\end{equation}
and
\begin{equation}
\begin{aligned}
&\|2\mathcal{E}[\nabla\chi_{\delta}\cdot\nabla\widetilde{n}(\tau,\,\cdot\,)]\|_{B_{r_*,\infty}^s(\mathbb{R}^N)}+\|\mathcal{E}[\widetilde{n}(\tau,\,\cdot\,)\Delta\chi_{\delta}]\|_{B_{r_*,\infty}^s(\mathbb{R}^N)} \\
&= 2\|\nabla\mathcal{E}[\chi_{\delta}]\cdot\nabla\mathcal{E}[\chi_{\delta/2}\widetilde{n}(\tau,\,\cdot\,)]\|_{B_{r_*,\infty}^s(\mathbb{R}^N)}+\|\mathcal{E}[\chi_{\delta/2}\widetilde{n}(\tau,\,\cdot\,)]\Delta\mathcal{E}[\chi_{\delta}]\|_{B_{r_*,\infty}^s(\mathbb{R}^N)} \\
&\le C\left(\|\chi_{\delta}\|_{B_{\infty,\infty}^{s+1}(\Omega)}\|\chi_{\delta/2}\widetilde{n}(\tau,\,\cdot\,)\|_{B_{r_*,1}^1(\Omega)}+\|\chi_{\delta}\|_{B_{\infty,1}^1(\Omega)}\|\chi_{\delta/2}\widetilde{n}(\tau,\,\cdot\,)\|_{B_{r_*,\infty}^{s+1}(\Omega)}\right) \\
&\quad +C\left(\|\chi_{\delta}\|_{B_{\infty,\infty}^{s+2}(\Omega)}\|\chi_{\delta/2}\widetilde{n}(\tau,\,\cdot\,)\|_{L^{r_*}(\Omega)}+\|\chi_{\delta}\|_{B_{\infty,1}^2(\Omega)}\|\chi_{\delta/2}\widetilde{n}(\tau,\,\cdot\,)\|_{B_{r_*,\infty}^s(\Omega)}\right) \\
&\le C_{\delta}M_{1,\delta}(r_*,s,t)
\end{aligned}
\end{equation}
for all $0<t<\infty$ and $t/2 \le \tau \le t$.
By combining the above estimates, we obtain the desired result.

\noindent
\textbf{(ii)} In the same way as in the case of (i), we may show that
\begin{equation}
\begin{aligned}
\|\mathcal{E}[\chi_{\delta}(\bm{u}\cdot\nabla\widetilde{c})](\tau,\,\cdot\,)\|_{B_{r_*,\infty}^{s+1}(\mathbb{R}^N)} &\le C\|\chi_{\delta}\bm{u}(\tau,\,\cdot\,)\|_{B_{\infty,\infty}^{s+1}(\Omega)}\|\chi_{\delta/2}\widetilde{c}(\tau,\,\cdot\,)\|_{B_{r_*,\infty}^{s+2}(\Omega)} \\
&\le CM_{2,\delta}(r_*,s,t), \\
\|\mathcal{E}[2\nabla\chi_{\delta}\cdot\nabla\widetilde{c}+\widetilde{c}\Delta\chi_{\delta}](\tau,\,\cdot\,)\|_{B_{r_*,\infty}^{s+1}(\mathbb{R}^N)} &\le C_{\delta}\|\chi_{\delta/2}\widetilde{c}(\tau,\,\cdot\,)\|_{B_{r_*,\infty}^{s+2}(\Omega)} \\
&\le C_{\delta}M_{2,\delta}(r_*,s,t)
\end{aligned}
\end{equation}
for all $0<t<\infty$ and $t/2 \le \tau \le t$, which yield the desired estimate.
\end{proof}

\subsection{Proof of Theorem \ref{solreg}}

The non-negativity of $n$ and $c$ may essentially be proved by the following maximum principle for the parabolic problems, which is a variant assertion of Proposition 52.8 (cf.~Remark 52.9) in \cite{quittner}.

\begin{lemm}\label{nonneg}
Let $\Omega$ be a smooth (possibly unbounded) domain and let $\alpha>0$ be a constant.
Suppose that
$$U \in BUC([0,\infty);C^1(\overline{\Omega})) \cap C((0,\infty);C^2(\Omega)) \cap C^1((0,\infty);C(\Omega))$$
satisfies
\begin{equation}\label{uineq}
\left\{\begin{aligned}
\partial_tU-\Delta U &\ge -\alpha (|\nabla U|+|U|), & t>0, \, x &\in \Omega, \\
\nabla U\cdot \bm{\nu} &\ge -\alpha|U|, & t>0, \, x &\in \partial\Omega, \\
U(0,x) &\ge 0, & x &\in \Omega.
\end{aligned}\right.
\end{equation}
Then there holds $U \ge 0$ in $(0,\infty) \times \Omega$.
\end{lemm}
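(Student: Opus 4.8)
The plan is to prove this by a pointwise weak minimum principle, applied not to $U$ itself but to a rescaled unknown $U(t,x)=e^{Kt}\psi(x)\,v(t,x)$ chosen so that two things happen at once: the zeroth–order term becomes a term with a strictly positive coefficient, and the Robin–type boundary inequality $\nabla U\cdot\bm{\nu}\ge-\alpha|U|$ is turned into one with a \emph{favourable} sign at a negative boundary minimum. The weight is built as follows. Fix $\beta>2\alpha$ and set, in a collar $\{\mathrm{dist}(\,\cdot\,,\partial\Omega)<d_0\}$ of $\partial\Omega$ on which $d(x):=\mathrm{dist}(x,\partial\Omega)$ is smooth, $\psi(x)=1+e^{-\beta d(x)}$, smoothly extended to the interior as a function with $1\le\psi\le 2$; then $\psi\in C^2(\overline{\Omega})$, $\psi^{-1}$ and the first two derivatives of $\psi$ are bounded, and on $\partial\Omega$ one has $\nabla d=-\bm{\nu}$, hence $\nabla\psi\cdot\bm{\nu}=\beta=\tfrac{\beta}{2}\psi$. (In the unbounded case one needs uniform $C^2$ regularity of $\partial\Omega$ for the collar to have uniform width; this is where the smoothness hypothesis is used.)

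A direct computation, using $\psi>0$ and $|\nabla U|=|e^{Kt}(\psi\nabla v+v\nabla\psi)|\le e^{Kt}(\psi|\nabla v|+|\nabla\psi|\,|v|)$, transforms the first line of \eqref{uineq} into
\begin{equation}
\partial_t v-\Delta v-\frac{2\nabla\psi}{\psi}\cdot\nabla v+\Big(K-\frac{\Delta\psi}{\psi}\Big)v\ \ge\ -\alpha\Big(|\nabla v|+\Big(1+\frac{|\nabla\psi|}{\psi}\Big)|v|\Big)\quad\text{in }(0,\infty)\times\Omega,
\end{equation}
the second line into $\nabla v\cdot\bm{\nu}\ge-\tfrac{\beta}{2}v-\alpha|v|$ on $(0,\infty)\times\partial\Omega$, and the third into $v(0,\cdot)\ge0$ in $\Omega$. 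Since $\psi$ is now fixed, $\Delta\psi/\psi$ and $|\nabla\psi|/\psi$ are bounded, so I would choose $K$ so large that $K-\Delta\psi/\psi-\alpha(1+|\nabla\psi|/\psi)\ge1$ on $\Omega$.

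Now suppose $\Omega$ is bounded, fix $T>0$, and put $-m:=\min_{[0,T]\times\overline{\Omega}}v$, the minimum being attained by compactness and continuity (here $v\in C([0,T]\times\overline{\Omega})$ from $U\in BUC([0,\infty);C^1(\overline{\Omega}))$ and $v\in C((0,\infty);C^2(\Omega))$, $C^1((0,\infty);C(\Omega))$ from the corresponding properties of $U$). Assume $m>0$; since $v(0,\cdot)\ge0$ the minimizing point $(t_0,x_0)$ has $t_0\in(0,T]$. If $x_0\in\Omega$, then $\nabla v(t_0,x_0)=0$, $\Delta v(t_0,x_0)\ge0$, $\partial_t v(t_0,x_0)\le0$, so the transformed differential inequality evaluated there yields $-\big(K-\tfrac{\Delta\psi}{\psi}\big)m\ge-\alpha(1+\tfrac{|\nabla\psi|}{\psi})m$, i.e. $K-\tfrac{\Delta\psi}{\psi}-\alpha(1+\tfrac{|\nabla\psi|}{\psi})\le0$, contradicting the choice of $K$. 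If $x_0\in\partial\Omega$, then, since $v(t_0,\cdot)\in C^1(\overline{\Omega})$ attains its spatial minimum at $x_0$, the inward directional derivative is nonnegative, i.e. $\nabla v(t_0,x_0)\cdot\bm{\nu}\le0$; but the transformed boundary inequality gives $\nabla v(t_0,x_0)\cdot\bm{\nu}\ge\tfrac{\beta}{2}m-\alpha m=(\tfrac{\beta}{2}-\alpha)m>0$, again a contradiction. Hence $m\le0$, so $v\ge0$ on $[0,T]\times\overline{\Omega}$ for all $T$, and $U\ge0$ on $(0,\infty)\times\Omega$ because $\psi>0$. For unbounded $\Omega$ (the relevant case being $\Omega=\mathbb{R}^N$, where the boundary condition is vacuous) one first replaces $v$ by $v+\varepsilon e^{\mu t}\zeta$, with $\zeta\in C^2(\overline{\Omega})$ equal to $1$ in a collar of $\partial\Omega$, $\zeta\ge1$, and $\zeta(x)\to\infty$ as $|x|\to\infty$; for $\mu$ large this perturbation satisfies a strict version of the inequality, its infimum over $[0,T]\times\overline{\Omega}$ is attained at a finite point, and $\nabla\zeta\cdot\bm{\nu}=0$ on $\partial\Omega$ preserves the favourable boundary sign, so the dichotomy above applies; letting $\varepsilon\to0$ concludes.

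The main obstacle is exactly the boundary inequality: in contrast to a homogeneous Neumann condition, $\nabla U\cdot\bm{\nu}\ge-\alpha|U|$ is a Robin–type condition whose linear transcription has a coefficient of the "wrong" sign, so it does not, on its own, rule out a negative boundary minimum, and a plain exponential‑in‑time rescaling does not remedy this. The point of multiplying by a positive weight that increases in the outward normal direction ($\nabla\psi\cdot\bm{\nu}>0$ on $\partial\Omega$) is precisely that it reverses this sign at negative minima; the only step needing care is the global construction of such a $\psi$ with two bounded derivatives and a positive lower bound, which relies on the (uniform) smoothness of $\partial\Omega$.
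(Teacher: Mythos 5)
Your argument is correct, but it follows a genuinely different route from the paper. The paper proves Lemma \ref{nonneg} by an energy method in the spirit of Proposition 52.8 and Remark 52.9 of \cite{quittner}: one multiplies the differential inequality by the negative part $U_-=\min\{0,U\}$, integrates by parts so that the boundary inequality produces the term $2\alpha\int_{\partial\Omega}|U_-|^2\,d\sigma$, absorbs this term via the trace inequality of Proposition \ref{traceineq} (which costs only $\tfrac12\int_\Omega|\nabla U_-|^2$ plus a multiple of $\int_\Omega|U_-|^2$), and concludes $U_-\equiv0$ by Gronwall, since $U_-(0,\cdot)=0$. You instead run a pointwise minimum-principle argument for $v=e^{-Kt}U/\psi$, where the weight $\psi$ is built so that $\nabla\psi\cdot\bm{\nu}=\tfrac{\beta}{2}\psi>0$ on $\partial\Omega$ with $\beta>2\alpha$; this flips the sign of the Robin-type boundary inequality at a negative boundary minimum, while a large $K$ kills the zeroth- and first-order terms at an interior minimum. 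Both proofs handle the same essential difficulty (the boundary inequality $\nabla U\cdot\bm{\nu}\ge-\alpha|U|$ has the ``wrong'' sign), the paper by spending part of the gradient energy through the trace inequality, you by a geometric weight; your computation of the transformed interior and boundary inequalities and the contradiction at interior/boundary minima is correct, and the regularity hypotheses on $U$ are exactly what your argument uses ($C^1$ up to the boundary for the directional-derivative step, $C^2$ in the interior for the second-order test). What each approach buys: yours is purely pointwise and needs only boundedness of $U$ (supplied by the $BUC$ assumption), whereas the paper's integral argument tacitly needs $U_-(t,\cdot)\in W^{1,2}(\Omega)$, which is automatic only for bounded $\Omega$ (the case actually used in Theorem \ref{solreg}); on the other hand, the energy proof avoids any construction of auxiliary functions. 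One small caveat in your treatment of unbounded domains: the barrier $\zeta$ ``equal to $1$ in a collar of $\partial\Omega$'' with $\zeta(x)\to\infty$ as $|x|\to\infty$ is impossible when $\partial\Omega$ is itself unbounded (e.g.\ a half-space); the fix is to require only $\nabla\zeta\cdot\bm{\nu}\ge0$ on $\partial\Omega$ together with $|\nabla\zeta|+|\Delta\zeta|\le C\zeta$ (e.g.\ $\zeta=1+|x|^2$ for a half-space), which preserves both the strictness of the perturbed inequality and the favourable boundary sign, since $w<0$ at the minimum forces $v<0$ there. With that adjustment, and the uniform $C^2$-regularity of $\partial\Omega$ you already flag for the collar construction of $\psi$, your proof is complete.
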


The proof of  Lemma \ref{nonneg} is given in the appendix.
Now we are ready to show Theorem \ref{solreg}.

\begin{proof}[Proof of Theorem \ref{solreg}]
We have $n \in BUC([0,\infty);C(\overline{\Omega}))$ and $c \in BUC([0,\infty);C^1(\overline{\Omega}))$ due to \eqref{solclass} and Corollary \ref{ncuemb} (ii).
In addition, since $q>2$, Proposition \ref{traceemb} yields
\begin{equation}
\begin{aligned}
\widetilde{n} &\in BUC([0,\infty);B_{r,q}^{2-2/q}(\Omega)) \subset BUC([0,\infty);B_{r,\infty}^{1+\eta(1-2/q)}(\Omega)), \\
\widetilde{c} &\in BUC([0,\infty);B_{r,q}^{3-2/q}(\Omega)) \subset BUC([0,\infty);B_{r,\infty}^{2+\eta(1-2/q)}(\Omega))
\end{aligned}
\end{equation}
for all $0<\eta<1$ and we obtain
\begin{equation}\label{nctraceemb2}
\|\widetilde{n}\|_{L^{\infty}(B_{r,q}^{2-2/q}(\Omega))} \le C\|\widetilde{n}\|_{\mathbb{E}_{q,r}^0}, \quad \|\widetilde{c}\|_{L^{\infty}(B_{r,q}^{3-2/q}(\Omega))} \le C\|\widetilde{c}\|_{\mathbb{E}_{q,r}^1}.
\end{equation}
Let $K$ be an arbitrary compact subset of $\Omega$. As $\delta_0 \coloneqq \inf_{x \in \partial K, \, y \in \partial\Omega}|x-y|>0$, we note that $\widetilde{n}=\chi_{\delta_*}\widetilde{n}$ and $\widetilde{c}=\chi_{\delta_*}\widetilde{c}$ in $(0,\infty) \times K$, provided that $0<\delta_*<\delta_0$. We now choose $0<\delta<4\delta_0$.
Since we have assumed \eqref{givenreg1} and \eqref{givenreg2} and since $\bm{u} \in C((0,\infty) ;C^2(\overline{\Omega})^N)$ from Lemma \ref{ureg}, by taking $0<\eta<1$ so that $\eta(1-2/q)<\theta_0$, we deduce that
$$M_{1,\delta}(r,\eta(1-2/q),t)+M_{2,\delta}(r,\eta(1-2/q),t)<\infty$$
for all $0<t<\infty$, where $M_{1,\delta}$ and $M_{2,\delta}$ are given by \eqref{m1def} and \eqref{m2def}, respectively.
Hence, by noting \eqref{nctraceemb2} and estimating the integral form \eqref{INT} with the aid of Proposition \ref{besovest} (i) and Lemma \ref{FGest}, we obtain
\begin{equation}
\begin{aligned}
\|\mathcal{E}[\chi_{\delta}\widetilde{n}(t,\,\cdot\,)]\|_{B_{r,\infty}^2(\mathbb{R}^N)} &\le C(1+t^{-1})\|\chi_{\delta}\widetilde{n}(t/2,\,\cdot\,)\|_{L^r(\Omega)} \\
&\quad +C\int_{t/2}^t\{1+(t-\tau)^{-(1/2)(2-\eta(1-2/q))}\} \|\mathcal{E}[F_{\delta}(\tau,\,\cdot\,)]\|_{B_{r,\infty}^{\eta(1-2/q)}(\mathbb{R}^N)} \, d\tau \\
&\le C(1+t^{-1})\|\widetilde{n}\|_{\mathbb{E}_{q,r}^0}+C_{\delta}(t+t^{(\eta/2)(1-2/q)})M_{1,\delta}(r,\eta(1-2/q),t)
\end{aligned}
\end{equation}
and
\begin{equation}
\begin{aligned}
\|\mathcal{E}[\chi_{\delta}\widetilde{c}(t,\,\cdot\,)]\|_{B_{r,\infty}^3(\mathbb{R}^N)} &\le C(1+t^{-3/2})\|\chi_{\delta}\widetilde{c}(t/2,\,\cdot\,)\|_{L^r(\Omega)} \\
&\quad +C\int_{t/2}^t\{1+(t-\tau)^{-(1/2)(2-\eta(1-2/q))}\}\|\mathcal{E}[G_{\delta}(\tau,\,\cdot\,)]\|_{B_{r,\infty}^{1+\eta(1-2/q)}(\mathbb{R}^N)} \, d\tau \\
&\le C(1+t^{-3/2})\|\widetilde{c}\|_{\mathbb{E}_{q,r}^1}+C_{\delta}(t+t^{(\eta/2)(1-2/q)})M_{2,\delta}(r,\eta(1-2/q),t)
\end{aligned}
\end{equation}
for all $0<t<\infty$.
Thus we see by Proposition \ref{besovemb} that
\begin{equation}
\begin{aligned}
\chi_{\delta}\widetilde{n} &\in C((0,\infty);B_{r,\infty}^2(\Omega)) \subset C((0,\infty);C^{1+\theta_*}(\overline{\Omega})), \\
\chi_{\delta}\widetilde{c} &\in C((0,\infty);B_{r,\infty}^3(\Omega)) \subset C((0,\infty);C^{2+\theta_*}(\overline{\Omega}))
\end{aligned}
\end{equation}
with some $0<\theta_* \le \theta_0$.
Therefore, it holds that
$$M_{1,2\delta}(\infty,\theta_*,t)+M_{2,2\delta}(\infty,\theta_*,t)<\infty$$
for all $0<t<\infty$, which yields
\begin{equation}
\begin{aligned}
\|\mathcal{E}[\chi_{2\delta}\widetilde{n}(t,\,\cdot\,)]\|_{B_{\infty,\infty}^{2+\theta_*-\eta}(\mathbb{R}^N)} &\le C(1+t^{-(2+\theta_*-\eta)/2})\|\chi_{2\delta}\widetilde{n}(t/2,\,\cdot\,)\|_{L^{\infty}(\Omega)} \\
&\quad +C\int_{t/2}^t\{1+(t-\tau)^{-(2-\eta)/2}\}\|\mathcal{E}[F_{2\delta}(\tau,\,\cdot\,)]\|_{B_{\infty,\infty}^{\theta_*}(\mathbb{R}^N)} \, d\tau \\
&\le C(1+t^{-1-\theta_*/2+\eta/2})\|\widetilde{n}\|_{\mathbb{E}_{q,r}^0}+C_{\delta}(t+t^{\eta/2})M_{1,2\delta}(\infty,\theta_*,t)
\end{aligned}
\end{equation}
and
\begin{equation}
\begin{aligned}
\|\mathcal{E}[\chi_{2\delta}\widetilde{c}(t,\,\cdot\,)]\|_{B_{\infty,\infty}^{3+\theta_*-\eta}(\mathbb{R}^N)} &\le C(1+t^{-(3+\theta_*-\eta)/2})\|\chi_{2\delta}\widetilde{c}(t/2,\,\cdot\,)\|_{L^{\infty}(\Omega)} \\
&\quad +C\int_{t/2}^t\{1+(t-\tau)^{-(2-\eta)/2}\}\|\mathcal{E}[G_{2\delta}(\tau,\,\cdot\,)]\|_{B_{\infty,\infty}^{1+\theta_*}(\mathbb{R}^N)} \, d\tau \\
&\le C(1+t^{-3/2-\theta_*/2+\eta/2})\|\widetilde{c}\|_{\mathbb{E}_{q,r}^1}+C(t+t^{\eta/2})M_{2,2\delta}(\infty,\theta_*,t)
\end{aligned}
\end{equation}
for all $0<\eta<\theta_*$.
Hence, there holds
\begin{equation}
\begin{aligned}
\chi_{2\delta}\widetilde{n} &\in C((0,\infty);B_{\infty,\infty}^{2+\theta_*-\eta}(\Omega)) \subset C((0,\infty);C^{1+\theta_0}(\overline{\Omega})), \\
\chi_{2\delta}\widetilde{c} &\in C((0,\infty);B_{\infty,\infty}^{3+\theta_*-\eta}(\Omega)) \subset C((0,\infty);C^{2+\theta_0}(\overline{\Omega})).
\end{aligned}
\end{equation}
By repeating the aforementioned argument with $\theta_*$ replaced by $\theta_0$, we see that
\begin{equation}\label{ncreg}
\chi_{4\delta}\widetilde{n} \in C((0,\infty);C^{2+\theta_0-\eta}(\overline{\Omega})), \quad \chi_{4\delta}\widetilde{c} \in C((0,\infty);C^{3+\theta_0-\eta}(\overline{\Omega}))
\end{equation}
for all $0<\eta<\theta_0$.
In addition, since \eqref{ncreg} implies that $M_{2,4\delta}(\infty,1+\theta_0-\eta,t)<\infty$ for all $0<t<\infty$ and $0<\eta<\theta_0$, we also observe that
\begin{equation}
\begin{aligned}
\|\mathcal{E}[\chi_{4\delta}\widetilde{c}(t,\,\cdot\,)]\|_{B_{\infty,\infty}^{4+\theta_0-2\eta}(\mathbb{R}^N)} &\le C(1+t^{-(4+\theta_0-2\eta)/2})\|\chi_{4\delta}\widetilde{c}(t/2,\,\cdot\,)\|_{L^{\infty}(\Omega)} \\
&\quad +C\int_{t/2}^t\{1+(t-\tau)^{-(2-\eta)/2}\} \|\mathcal{E}[G_{4\delta}(\tau,\,\cdot\,)]\|_{B_{\infty,\infty}^{2+\theta_0-\eta}(\mathbb{R}^N)} \, d\tau \\
&\le C(1+t^{-2-\theta_0/2+\eta})\|\widetilde{c}\|_{\mathbb{E}_{q,r}^1}+C(t+t^{\eta/2})M_{2,4\delta}(\infty,1+\theta_0-\eta,t),
\end{aligned}
\end{equation}
which yields
\begin{equation}\label{creg}
\chi_{4\delta}\widetilde{c} \in C((0,\infty);C^{4+\theta_0-2\eta}(\overline{\Omega}))
\end{equation}
for all $0<\eta<\theta_0/2$.
Therefore, we see by \eqref{cutsys} and Lemma \ref{FGest} that
\begin{equation}
\begin{aligned}
\|\partial_t\mathcal{E}[\chi_{4\delta}\widetilde{n}(t,\,\cdot\,)]\|_{C^{\theta_0-\eta}(\mathbb{R}^N)} &\le \|\Delta\mathcal{E}[\chi_{4\delta}\widetilde{n}(t,\,\cdot\,)]\|_{C^{\theta_0-\eta}(\mathbb{R}^N)}+\|\mathcal{E}[F_{4\delta}(t,\,\cdot\,)]\|_{C^{\theta_0-\eta}(\mathbb{R}^N)} \\
&\le C\left\{\|\chi_{4\delta}\widetilde{n}(t,\,\cdot\,)\|_{C^{2+\theta_0-\eta}(\Omega)}+M_{1,4\delta}(\infty,1+\theta_0-\eta,t)\right\}, \\
\|\partial_t\mathcal{E}[\chi_{4\delta}\widetilde{c}(t,\,\cdot\,)]\|_{C^{2+\theta_0-2\eta}(\mathbb{R}^N)} &\le \|(1-\Delta)\mathcal{E}[\chi_{4\delta}\widetilde{c}(t,\,\cdot\,)]\|_{C^{2+\theta_0-2\eta}(\mathbb{R}^N)}+\|\mathcal{E}[G_{4\delta}(t,\,\cdot\,)]\|_{C^{2+\theta_0-2\eta}(\mathbb{R}^N)} \\
&\le C\left\{\|\chi_{4\delta}\widetilde{c}(t,\,\cdot\,)\|_{C^{4+\theta_0-2\eta}(\Omega)}+M_{2,4\delta}(\infty,1+\theta_0-2\eta,t)\right\}
\end{aligned}
\end{equation}
for all $0<t<\infty$ and for sufficiently small $0<\eta<1$.
Together with \eqref{ncreg} and \eqref{creg}, we observe that
\begin{equation}
\chi_{4\delta}\widetilde{n} \in C^1((0,\infty);C^{\theta_0-\eta}(\overline{\Omega})), \quad \chi_{4\delta}\widetilde{c} \in C^1((0,\infty);C^{2+\theta_0-2\eta}(\overline{\Omega})),
\end{equation}
which yield
\begin{equation}
\widetilde{n} \in C^1((0,\infty);C^{\theta_0-\eta}(K)), \quad \widetilde{c} \in C^1((0,\infty);C^{2+\theta_0-2\eta}(K))
\end{equation}
due to $0<\delta<4\delta_0$. Thus, we have \eqref{ncureg} with the aid of Lemma \ref{ureg} and \eqref{ncreg} and \eqref{creg}.

In addition, by virtue of the regularities \eqref{ncureg}, we see that $(n,c,\bm{u})$ satisfies \eqref{ABS} in a classical sense.
Hence, integrating the first equation of \eqref{ABS} over $\Omega$ and applying Proposition \ref{gaussdiv}, we have $\partial_t\int_{\Omega}n(t,x)\,dx=0$ for all $0<t<\infty$, which yields $\int_{\Omega}n(t,x)\,dx=\int_{\Omega}n_0(x)\,dx$.
Moreover, integrating the second equation of \eqref{ABS} over $\Omega$ and applying Proposition \ref{gaussdiv} again, we observe that
$$\partial_t\int_{\Omega}c(t,x)\,dx+\int_{\Omega}c(t,x)\,dx=\int_{\Omega}n(t,x)\,dx$$
for all $0<t<\infty$.
Since $\int_{\Omega}n(t,x)\,dx=\int_{\Omega}n_0(x)\,dx$ implies that 
\begin{equation}
\partial_tI(t)+I(t)=\int_{\Omega}n_0(x)\,dx, \qquad I(t) \coloneqq \int_{\Omega}c(t,x)\,dx,
\end{equation}
we obtain $I(t)=me^{-t}+\int_{\Omega}n_0(x)\,dx$ with some constant $m \in \mathbb{R}$.
Noting that
$$m=I(0)-\int_{\Omega}n_0(x)\,dx=\int_{\Omega}c_0(x)\,dx-\int_{\Omega}n_0(x)\,dx,$$
we have the desired identity.

Concerning the non-negativity of solutions, we assume that $N/r+2/q<1$. Then Proposition \ref{besovemb} yields $B_{r,q}^{2-2/q}(\Omega) \subset C^1(\overline{\Omega})$.
Thus we see by \eqref{solclass} and Proposition \ref{traceemb} that
$$e^{\lambda t}(n-\overline{n}_0) \in BUC([0,\infty);B_{r,q}^{2-2/q}(\Omega)) \subset BUC([0,\infty);C^1(\overline{\Omega})).$$
Therefore, by noting \eqref{solclass} again, we have
$$n,c \in BUC([0,\infty);C^1(\overline{\Omega})) \cap C((0,\infty);C^2(\Omega)) \cap C^1((0,\infty);C(\Omega)).$$
Notice that the nonlinear boundary condition $\nabla n \cdot \bm{\nu}=nS(t,x)\nabla c \cdot \bm{\nu}$ makes sense in $C(\partial\Omega)$ because of \eqref{VNBC} (see Remark \ref{regrema} (ii)).
In addition, since $(n,c,\bm{u})$ satisfies \eqref{ABS}, there holds
\begin{equation}
\begin{aligned}
&\partial_tn-\Delta n \\
&= -\nabla n \cdot (S\nabla c)-n\nabla \cdot (S\nabla c)-\bm{u}\cdot\nabla n \\
&\ge -C\left(|\nabla n|\|S\|_{L^{\infty}(L^{\infty}(\Omega))}\|c\|_{L^{\infty}(C^1(\overline{\Omega}))}+|n|\|S\|_{L^{\infty}(C^1(\overline{\Omega}))}\|c\|_{L^{\infty}(C^2(\overline{\Omega}))}+|\nabla n|\|\bm{u}\|_{L^{\infty}(L^{\infty}(\Omega))}\right) \\
&\ge -C\left(\|S\|_{L^{\infty}(C^1(\overline{\Omega}))}\|c\|_{L^{\infty}(C^2(\overline{\Omega}))}+\|\bm{u}\|_{L^{\infty}(L^{\infty}(\Omega))}\right)(|\nabla n|+|n|)
\end{aligned}
\end{equation}
in $(0,\infty) \times \Omega$ and
$$\nabla n \cdot \bm{\nu} \ge -C|n|\|S\|_{L^{\infty}(L^{\infty}(\Omega))}\|c\|_{L^{\infty}(C^1(\overline{\Omega}))}$$
on $(0,\infty) \times \partial\Omega$.
By setting
$$\alpha \coloneqq C\left(\|S\|_{L^{\infty}(C^1(\overline{\Omega}))}\|c\|_{L^{\infty}(C^2(\overline{\Omega}))}+\|\bm{u}\|_{L^{\infty}(L^{\infty}(\Omega))}\right),$$
we may apply Lemma \ref{nonneg} to obtain $n \ge 0$ in $(0,\infty) \times \Omega$.
Concerning the function $c$, since we have $\nabla c \cdot \bm{\nu}=0$ on $(0,\infty) \times \partial\Omega$ and
\begin{equation}
\begin{aligned}
\partial_tc-\Delta_{\mathrm{N},1}c &=-c+n-\bm{u}\cdot\nabla c \ge -|c|-C|\nabla c|\|\bm{u}\|_{L^{\infty}(L^{\infty}(\Omega)} \\
&\ge -C\left(1+\|\bm{u}\|_{L^{\infty}(L^{\infty}(\Omega)}\right)(|\nabla c|+|c|)
\end{aligned}
\end{equation}
in $(0,\infty) \times \Omega$ from $n \ge 0$, by setting
$\alpha \coloneqq C(1+\|\bm{u}\|_{L^{\infty}(L^{\infty}(\Omega))}),$
we may apply Lemma \ref{nonneg} to obtain $c \ge 0$ in $(0,\infty) \times \Omega$.
This completes the proof of Theorem \ref{solreg}.
\end{proof}

\section*{Acknowledgment}

The authors would like to express their appreciation to the anonymous referees for their comments on the original manuscript, in particular for presenting fairly recent contributions on related chemotaxis models.

The first author was partially supported by JSPS KAKENHI Grant Numbers JP22J12100 and JP22KJ2930.
The second author was partially supported by JSPS KAKENHI Grant Number JP21K13826.

\section*{Conflict of interest statement}

On behalf of all authors, the corresponding author states that there is no conflict of interest.

\section*{Data availability statement}

Data sharing not applicable to this article as no datasets were generated or analysed during the current study.

\appendix
\section{The estimates of $P$ and $e^{-tA}$ in the Besov spaces}
\label{ap-A}

In this appendix, we show Proposition \ref{besovstokes}, which gives the boundedness results of the Helmholtz projection $P$ and the Stokes semigroup $e^{-tA}$ in the Besov spaces framework.
The proof is mainly based on a combination of classical estimates due to \cite{cattabriga} and \cite{gigamiyakawa} with the real interpolation method.

\begin{proof}[Proof of Proposition \ref{besovstokes}]
\textbf{(i)} Note that there holds
\begin{equation}\label{helmholtz}
\|P\bm{\psi}_*\|_{W^{k,r_*}(\Omega)} \le C\|\bm{\psi}_*\|_{W^{k,r_*}(\Omega)}
\end{equation}
for all $1<r_*<\infty$ and $k \in \mathbb{N} \cup \{0\}$ and $\bm{\psi}_* \in W^{k,r_*}(\Omega)^N$ from the result given by Giga and Miyakawa \cite{gigamiyakawa}*{Lemma 3.3 (i)}.
Given $0<s<\infty$, by taking $k \in \mathbb{N}$ so that $k>s$, we have $B_{r_*,\infty}^s(\Omega)=(L^{r_*}(\Omega),W^{k,r_*}(\Omega))_{s/k,\infty}$.
Thus we obtain the desired result by combining the above estimate with the usual property of the real interpolation spaces \cite{lunardi}*{Theorem 1.6}.

\noindent
\textbf{(ii)} We recall the estimate
$$\|\bm{\psi}_*\|_{W^{k+2,r_*}(\Omega)} \le C\|A\bm{\psi}_*\|_{W^{k,r_*}(\Omega)}$$
for all $1<r_*<\infty$ and $k \in \mathbb{N} \cup \{0\}$ and $\bm{\psi}_* \in D(A)$ such that $A\bm{\psi}_* \in W^{k,r_*}(\Omega)^N$ given by Cattabriga \cite{cattabriga}*{Theorem} (see also \cite{gigamiyakawa}*{Lemma 3.1}).
Here we take a sufficiently large $k \in \mathbb{N}$ and apply the above estimate to obtain
\begin{equation}\label{stokesest}
\|\bm{\psi}_*\|_{W^{2k,r_*}(\Omega)} \le C\|A\bm{\psi}_*\|_{W^{2k-2,r_*}(\Omega)} \le C\|A(A\bm{\psi}_*)\|_{W^{2k-4,r_*}(\Omega)} \le \cdots \le C\|A^k\bm{\psi}_*\|_{L^{r_*}(\Omega)}
\end{equation}
for all $\bm{\psi}_* \in D(A^k)$.
Since
\begin{equation}
\|Ae^{-tA}\bm{\psi}_*\|_{L^{r_*}(\Omega)} \le Ct^{-1}\|\bm{\psi}_*\|_{L^{r_*}(\Omega)}
\end{equation}
for all $1<r_*<\infty$ and $0<t<\infty$ and $\bm{\psi}_* \in L_{\sigma}^{r_*}(\Omega)$ \cite{gigamiyakawa}*{Proposition 1.2}, it holds by \eqref{stokesest} that
\begin{equation}\label{stokessemiest}
\begin{aligned}
\|e^{-tA}P\bm{\psi}\|_{W^{2k,r_*}(\Omega)} &\le C\|A^ke^{-tA}P\bm{\psi}\|_{L^{r_*}(\Omega)}=C\|(Ae^{-(t/k)A})^kP\bm{\psi}\|_{L^{r_*}(\Omega)} \\
&\le C(t/k)^{-1}\|(Ae^{-(t/k)A})^{k-1}P\bm{\psi}\|_{L^{r_*}(\Omega)} \\
&\le C(t/k)^{-2}\|(Ae^{-(t/k)A})^{k-2}P\bm{\psi}\|_{L^{r_*}(\Omega)} \\
&\le \cdots \le Ct^{-k}\|\bm{\psi}\|_{L^{r_*}(\Omega)}.
\end{aligned}
\end{equation}
Noting that $\|e^{-tA}P\bm{\psi}\|_{L^{r_*}(\Omega)} \le C\|\bm{\psi}\|_{L^{r_*}(\Omega)}$ and $B_{r_*,1}^{s_0}(\Omega)=(L^{r_*}(\Omega),W^{2k,r_*}(\Omega))_{s_0/(2k),1}$ for $0<s_0<\infty$ and $k \in \mathbb{N}$ such that $2k>s_0$, we see by \cite{lunardi}*{Theorem 1.6} that
$$\|e^{-tA}P\bm{\psi}\|_{B_{r_*,1}^{s_0}(\Omega)} \le C(t^{-k})^{s_0/(2k)}\|\bm{\psi}\|_{L^{r_*}(\Omega)}=Ct^{-s_0/2}\|\bm{\psi}\|_{L^{r_*}(\Omega)}.$$

Furthermore, recalling $A=-P\Delta$ and combining \eqref{helmholtz} and \eqref{stokesest}, we have
\begin{equation}
\begin{aligned}
\|e^{-tA}P\bm{\psi}\|_{W^{2k,r_*}(\Omega)} &\le C\|A^ke^{-tA}P\bm{\psi}\|_{L^{r_*}(\Omega)} \le C\|A^kP\bm{\psi}\|_{L^{r_*}(\Omega)} \le C\|A^{k-1}P\bm{\psi}\|_{W^{2,r_*}(\Omega)} \\
&\le C\|A^{k-2}P\bm{\psi}\|_{W^{4,r_*}(\Omega)} \le \cdots \le C\|P\bm{\psi}\|_{W^{2k,r_*}(\Omega)} \le C\|\bm{\psi}\|_{W^{2k,r_*}(\Omega)}.
\end{aligned}
\end{equation}
Given $s,s_0 \in (0,\infty)$, we take $k \in \mathbb{N}$ satisfying $2k>s+s_0$.
Then we see by $B_{r_*,\infty}^s(\Omega)=(L^{r_*}(\Omega),W^{2k,r_*}(\Omega))_{s/(2k),\infty}$ and \cite{lunardi}*{Theorem 1.6} that
$$\|e^{-tA}P\bm{\psi}\|_{B_{r_*,\infty}^s(\Omega)} \le C\|\bm{\psi}\|_{B_{r_*,\infty}^s(\Omega)}.$$
We apply \cite{lunardi}*{Theorem 1.6} again with the aid of \eqref{stokessemiest} to obtain
$$\|e^{-tA}P\bm{\psi}\|_{W^{2k,r_*}(\Omega)} \le C(t^{-k})^{1-s/(2k)}\|\bm{\psi}\|_{B_{r_*,\infty}^s(\Omega)}=Ct^{-(2k-s)/2}\|\bm{\psi}\|_{B_{r_*,\infty}^s(\Omega)}.$$
Therefore, noting that
\begin{equation}
\begin{aligned}
(B_{r_*,\infty}^s(\Omega),W^{2k,r_*}(\Omega))_{s_0/(2k-s),1} &= ((L^{r_*}(\Omega),W^{2k,r_*}(\Omega))_{s/(2k),\infty},W^{2k,r_*}(\Omega))_{s_0/(2k-s),1} \\
&=(L^{r_*}(\Omega),W^{2k,r_*}(\Omega))_{(s+s_0)/(2k),1} \\
&=B_{r_*,1}^{s+s_0}(\Omega)
\end{aligned}
\end{equation}
from \cite{lunardi}*{Corollary 1.24} and using \cite{lunardi}*{Corollary 1.7}, we deduce that
\begin{equation}
\begin{aligned}
\|e^{-tA}P\bm{\psi}\|_{B_{r_*,1}^{s+s_0}(\Omega)} &\le C\|e^{-tA}P\bm{\psi}\|_{B_{r_*,\infty}^s(\Omega)}^{1-s_0/(2k-s)} \|e^{-tA}P\bm{\psi}\|_{W^{2k,r_*}(\Omega)}^{s_0/(2k-s)} \\
&\le C\|\bm{\psi}\|_{B_{r_*,\infty}^s(\Omega)}^{1-s_0/(2k-s)}(t^{-(2k-s)/2}\|\bm{\psi}\|_{B_{r_*,\infty}^s(\Omega)})^{s_0/(2k-s)} \\
&=Ct^{-s_0/2}\|\bm{\psi}\|_{B_{r_*,\infty}^s(\Omega)},
\end{aligned}
\end{equation}
which completes the proof of Proposition \ref{besovstokes}.
\end{proof}

\section{Non-negativity of solutions}
\label{ap-B}

In this appendix, we give the proof of Lemma \ref{nonneg}, i.e., we show the non-negativity of solutions to a special parabolic problem \eqref{uineq}.
The claim and its proof are almost the same as in Proposition 52.8 and Remark 52.9 in \cite{quittner}, but our boundary condition $\nabla U\cdot\bm{\nu} \ge -\alpha|U|$ in \eqref{uineq} is slightly weaker than the assertion recorded in \cite{quittner} (in particular, the lower bound of $\nabla U \cdot \bm{\nu}$ on $\partial \Omega$ is given by $- \alpha|U|$ instead of $\alpha U$).
To prove Lemma \ref{nonneg}, we recall the following trace inequality.

\begin{prop}\label{traceineq}
Let $\Omega \subset \mathbb{R}^N$, $N \ge 2$, be a smooth (possibly unbounded) domain and let $\alpha>0$ be a constant.
Assume that $\psi \in W^{1,2}(\Omega)$.
Then there holds $\psi \in L^2(\partial\Omega)$ with the estimate
$$\int_{\partial\Omega}|\psi(x)|^2\,d\sigma(x) \le \frac{1}{2\alpha}\int_{\Omega}|\nabla\psi(x)|^2\,dx+C(1+\alpha)\int_{\Omega}|\psi(x)|^2\,dx,$$
where $C>0$ is a constant independent of $\alpha$ and $\psi$.
\end{prop}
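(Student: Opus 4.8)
The plan is to deduce the estimate from the divergence theorem applied to a conveniently chosen vector field, following the classical $\varepsilon$-version of the trace inequality. First I would fix a vector field $\bm{h} \in C^1(\overline{\Omega})^N$ with $\|\bm{h}\|_{L^{\infty}(\Omega)} + \|\nabla\cdot\bm{h}\|_{L^{\infty}(\Omega)} < \infty$ and $\bm{h}\cdot\bm{\nu} = 1$ on $\partial\Omega$. Since $\partial\Omega$ is (uniformly) smooth, such an $\bm{h}$ is obtained by extending the unit outer normal $\bm{\nu}$ to a collar neighbourhood of $\partial\Omega$ and multiplying by a cut-off function supported in that neighbourhood, so that $\bm{h}$ and its first derivatives are bounded on $\overline{\Omega}$ with bounds depending only on $\Omega$.

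Next, for $\psi \in C^1(\overline{\Omega}) \cap W^{1,2}(\Omega)$ of bounded support I would apply the divergence theorem to $|\psi|^2\bm{h}$, which gives
\[
\int_{\partial\Omega}|\psi(x)|^2\,d\sigma(x) = \int_{\partial\Omega}|\psi(x)|^2\,(\bm{h}(x)\cdot\bm{\nu}(x))\,d\sigma(x) = \int_{\Omega}\bigl(|\psi(x)|^2\,\nabla\cdot\bm{h}(x) + 2\psi(x)\,\bm{h}(x)\cdot\nabla\psi(x)\bigr)\,dx .
\]
Young's inequality yields the pointwise bound $2|\psi|\,|\bm{h}\cdot\nabla\psi| \le \tfrac{1}{2\alpha}|\nabla\psi|^2 + 2\alpha\|\bm{h}\|_{L^{\infty}(\Omega)}^2|\psi|^2$, hence
\[
\int_{\partial\Omega}|\psi(x)|^2\,d\sigma(x) \le \frac{1}{2\alpha}\int_{\Omega}|\nabla\psi(x)|^2\,dx + \bigl(\|\nabla\cdot\bm{h}\|_{L^{\infty}(\Omega)} + 2\alpha\|\bm{h}\|_{L^{\infty}(\Omega)}^2\bigr)\int_{\Omega}|\psi(x)|^2\,dx ,
\]
and $\|\nabla\cdot\bm{h}\|_{L^{\infty}(\Omega)} + 2\alpha\|\bm{h}\|_{L^{\infty}(\Omega)}^2 \le C(1+\alpha)$ with $C = C(\Omega)$.

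Finally I would remove the auxiliary hypotheses on $\psi$ by density: smooth functions of bounded support are dense in $W^{1,2}(\Omega)$ for a (uniformly) smooth domain, the trace operator $W^{1,2}(\Omega) \to L^2(\partial\Omega)$ is bounded, and all three integrals above depend continuously on $\psi$ in the $W^{1,2}(\Omega)$-norm; passing to the limit gives the inequality for every $\psi \in W^{1,2}(\Omega)$ and in particular $\psi|_{\partial\Omega} \in L^2(\partial\Omega)$. (Alternatively, $\psi \in W^{1,2}(\Omega)$ already implies $|\psi|^2\bm{h} \in W^{1,1}(\Omega)^N$, so one may invoke the divergence theorem directly in the $W^{1,1}$ setting.) The only genuinely delicate point is uniformity when $\Omega$ is unbounded: both the construction of $\bm{h}$ with globally finite $\|\bm{h}\|_{L^{\infty}}$, $\|\nabla\cdot\bm{h}\|_{L^{\infty}}$ and the density statement rest on $\partial\Omega$ being uniformly smooth, while for the bounded domains used in the applications these facts are entirely standard.
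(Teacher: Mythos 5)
Your proof is correct, but it follows a genuinely different route from the paper. You prove the inequality from scratch: extend the unit normal to a bounded vector field $\bm{h}$ with $\bm{h}\cdot\bm{\nu}=1$ on $\partial\Omega$, apply the divergence theorem to $|\psi|^2\bm{h}$, and split the term $2\psi\,\bm{h}\cdot\nabla\psi$ by Young's inequality with the weight chosen to produce the coefficient $\tfrac{1}{2\alpha}$; a density (or $W^{1,1}$ Gauss--Green) argument then removes the smoothness assumption on $\psi$. The paper instead quotes the multiplicative trace inequality of Galdi (Theorem II.4.1 with $r=q=2$, $m=1$), which already gives $\psi\in L^2(\partial\Omega)$ together with $\int_{\partial\Omega}|\psi|^2\,d\sigma \le C\{\|\psi\|_{L^2}^2+\|\psi\|_{L^2}(\|\psi\|_{L^2}^2+\|\nabla\psi\|_{L^2}^2)^{1/2}\}$, and then applies Young's inequality once to trade the product for $\tfrac{1}{2\alpha}\|\nabla\psi\|_{L^2}^2+C(1+\alpha)\|\psi\|_{L^2}^2$. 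Your argument is more self-contained and makes explicit exactly what is needed in the unbounded case (a uniform collar for $\partial\Omega$ so that $\bm{h}$, $\nabla\cdot\bm{h}$ are globally bounded, plus density of nice functions), whereas the paper's version is shorter because the existence of the trace and the admissible domain classes are delegated to the cited theorem. One cosmetic remark: invoking the boundedness of the trace operator to pass to the limit is slightly redundant, since applying your inequality to differences $\psi_j-\psi_k$ of the approximating sequence already shows their boundary values are Cauchy in $L^2(\partial\Omega)$, which yields both the trace and the estimate without any external trace theorem.
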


\begin{proof}
By applying \cite{galdi}*{Theorem II.4.1} with $r=q=2$ and $m=1$, we have $\psi \in L^2(\partial\Omega)$ and
\begin{equation}
\begin{aligned}
\int_{\partial\Omega}|\psi(x)|^2\,d\sigma(x) &\le C\left\{\int_{\Omega}|\psi(x)|^2\,dx+\left(\int_{\Omega}|\psi(x)|^2\,dx\right)^{1/2}\left(\int_{\Omega}(|\psi(x)|^2+|\nabla\psi(x)|^2)\,dx\right)^{1/2}\right\} \\
&\le C\left\{2\int_{\Omega}|\psi(x)|^2\,dx+\left(\int_{\Omega}|\psi(x)|^2\,dx\right)^{1/2}\left(\int_{\Omega}|\nabla\psi(x)|^2\,dx\right)^{1/2}\right\}.
\end{aligned}
\end{equation}
Since the Young inequality yields
\begin{equation}
\left(\int_{\Omega}|\psi(x)|^2\,dx\right)^{1/2}\left(\int_{\Omega}|\nabla\psi(x)|^2\,dx\right)^{1/2} \le \frac{1}{2}\left\{C\alpha\int_{\Omega}|\psi(x)|^2\,dx+\frac{1}{C\alpha}\int_{\Omega}|\nabla\psi(x)|^2\,dx\right\},
\end{equation}
it holds that
\begin{equation}
\int_{\partial\Omega}|\psi(x)|^2\,d\sigma(x) \le \frac{1}{2\alpha}\int_{\Omega}|\nabla\psi(x)|^2\,dx+C\left(2+\frac{C\alpha}{2}\right)\int_{\Omega}|\psi(x)|^2\,dx,
\end{equation}
which completes the proof.
\end{proof}

\begin{proof}[Proof of Lemma \ref{nonneg}]
We follow the discussions in Proposition 52.8 and Remark 52.9 in \cite{quittner}.
Set $U_-(t,x) \coloneqq \min\{0,U(t,x)\} \le 0$ for $(t,x) \in (0,\infty) \times \Omega$.
Since
$$\partial_tU_-=\left\{\begin{array}{cl}
0 & \text{if $U \ge 0$}, \\
\partial_tU & \text{if $U<0$},
\end{array}\right. \quad \nabla U_-=\left\{\begin{array}{cl}
0 & \text{if $U \ge 0$}, \\
\nabla U & \text{if $U<0$},
\end{array}\right.$$
we have $\partial_tU_- \in L^{\infty}(0,\infty;L^{\infty}(\Omega))$ and $\nabla U_- \in L^{\infty}(0,\infty;L^{\infty}(\Omega)^N)$.
In addition, there holds $U_-|\nabla U|=-|U_-||\nabla U_-|$ and $U_-|U|=-|U_-|^2$.
Hence, noting that $\partial_t|U_-|^2 =2U_-\partial_tU$, we see by \eqref{uineq} and the Young inequality that
\begin{equation}\label{umest}
\begin{aligned}
\partial_t|U_-|^2 &\le 2U_-(\Delta U-\alpha|\nabla U|-\alpha|U|) \\
&=2U_-\Delta U+2\alpha|U_-||\nabla U_-|+2\alpha|U_-|^2 \\
&\le 2U_-\Delta U+\left(\alpha^2|U_-|^2+|\nabla U_-|^2\right)+2\alpha|U_-|^2 \\
&= 2U_-\Delta U+|\nabla U_-|^2+\alpha(2+\alpha)|U_-|^2
\end{aligned}
\end{equation}
in $(0,\infty) \times \Omega$.
Here, since \eqref{uineq} yields $U_-\nabla U\cdot \bm{\nu} \le -\alpha U_-|U|=\alpha |U_-|^2$, the integration by parts and Proposition \ref{traceineq} imply that
\begin{equation}
\begin{aligned}
2\int_{\Omega}U_-(t,x)\Delta U(t,x)\,dx &= 2\int_{\partial\Omega}U_-(t,x)\nabla U(t,x) \cdot \bm{\nu}(x)\,d\sigma (x)-2\int_{\Omega}\nabla U(t,x)\cdot \nabla U_-(t,x)\,dx \\
&\le 2\alpha\int_{\partial\Omega}|U_-(t,x)|^2\,d\sigma (x)-2\int_{\Omega}|\nabla U_-(t,x)|^2\,dx \\
&\le -\int_{\Omega}|\nabla U_-(t,x)|^2\,dx+C\alpha(1+\alpha)\int_{\Omega}|U_-(t,x)|^2\,dx
\end{aligned}
\end{equation}
for all $0<t<\infty$.
Therefore, integrating the inequality \eqref{umest} over $\Omega$ and using the above estimate, we observe that
\begin{equation}
\begin{aligned}
\partial_t\int_{\Omega}|U_-(t,x)|^2\,dx &\le -\int_{\Omega}|\nabla U_-(t,x)|^2\,dx+C\alpha(1+\alpha)\int_{\Omega}|U_-(t,x)|^2\,dx \\
&\quad +\int_{\Omega}|\nabla U_-(t,x)|^2\,dx+\alpha(2+\alpha)\int_{\Omega}|U_-(t,x)|^2\,dx \\
&\le C\alpha(1+\alpha)\int_{\Omega}|U_-(t,x)|^2\,dx
\end{aligned}
\end{equation}
for all $0<t<\infty$.
Since we obtain
$$\int_{\Omega}|U_-(t,x)|^2\,dx \le e^{C\alpha(1+\alpha)t}
\int_{\Omega}|U_-(0,x)|^2\,dx$$
and since $U_-(0,x) \equiv 0$ due to \eqref{uineq}, it necessarily holds $U \ge 0$ in $(0,\infty) \times \Omega$.
\end{proof}

\begin{bibdiv}
\begin{biblist}
\bib{amann}{article}{
	author={Amann, Herbert},
	title={On the strong solvability of the Navier-Stokes equations},
	journal={J.~Math.~Fluid Mech.},
	volume={2},
	date={2000},
	number={1},
	pages={16--98},
}

\bib{angenent}{article}{
	author={Angenent, Sigurd B.},
	title={Nonlinear analytic semiflows},
	journal={Proc.~Roy.~Soc.~Edinburgh Sect.~A},
	volume={115},
	date={1990},
	number={1-2},
	pages={91--107},
}

\bib{Black19}{article}{
	author={Black, Tobias},
	title={Global solvability of chemotaxis-fluid systems with nonlinear
		diffusion and matrix-valued sensitivities in three dimensions},
	journal={Nonlinear Anal.},
	volume={180},
	date={2019},
	pages={129--153},
}

\bib{brezis}{book}{
	author={Brezis, Haim},
	title={Functional Analysis, Sobolev Spaces and Partial Differential
		Equations},
	series={Universitext},
	publisher={Springer, New York},
	date={2011},
}

\bib{CaoLankeit}{article}{
	author={Cao, Xinru},
	author={Lankeit, Johannes},
	title={Global classical small-data solutions for a three-dimensional
		chemotaxis Navier-Stokes system involving matrix-valued sensitivities},
	journal={Calc.~Var.~Partial Differential Equations},
	volume={55},
	date={2016},
	number={4},
	pages={Art.~107, 39},
}

\bib{cattabriga}{article}{
	author={Cattabriga, Lamberto},
	title={Su un problema al contorno relativo al sistema di equazioni di
		Stokes},
	language={Italian},
	journal={Rend.~Sem.~Mat.~Univ.~Padova},
	volume={31},
	date={1961},
	pages={308--340},
}

\bib{chae}{article}{
	author={Chae, Dongho},
	title={Local existence and blow-up criterion for the Euler equations in
		the Besov spaces},
	journal={Asymptot.~Anal.},
	volume={38},
	date={2004},
	number={3-4},
	pages={339--358},
}

\bib{chaekanglee2013}{article}{
	author={Chae, Myeongju},
	author={Kang, Kyungkeun},
	author={Lee, Jihoon},
	title={Existence of smooth solutions to coupled chemotaxis-fluid
		equations},
	journal={Discrete Contin.~Dyn.~Syst.},
	volume={33},
	date={2013},
	number={6},
	pages={2271--2297},
}

\bib{chaekanglee2014}{article}{
	author={Chae, Myeongju},
	author={Kang, Kyungkeun},
	author={Lee, Jihoon},
	title={Global existence and temporal decay in Keller-Segel models coupled
		to fluid equations},
	journal={Comm.~Partial Differential Equations},
	volume={39},
	date={2014},
	number={7},
	pages={1205--1235},
}

\bib{choe}{article}{
	author={Choe, Hi Jun},
	author={Lkhagvasuren, Bataa},
	title={Global existence result for chemotaxis Navier-Stokes equations in
		the critical Besov spaces},
	journal={J.~Math.~Anal.~Appl.},
	volume={446},
	date={2017},
	number={2},
	pages={1415--1426},
}

\bib{denk}{article}{
	author={Denk, Robert},
	title={An Introduction to Maximal Regularity for Parabolic Evolution
		Equations},
	book={
		series={Springer Proc.~Math.~Stat.},
		volume={346},
		publisher={Springer, Singapore},
	},
	date={2021},
	pages={1--70},
}

\bib{dilorz}{article}{
	author={Di Francesco, Marco},
	author={Lorz, Alexander},
	author={Markowich, Peter},
	title={Chemotaxis-fluid coupled model for swimming bacteria with
		nonlinear diffusion: global existence and asymptotic behavior},
	journal={Discrete Contin.~Dyn.~Syst.},
	volume={28},
	date={2010},
	number={4},
	pages={1437--1453},
}

\bib{duan}{article}{
	author={Duan, Renjun},
	author={Lorz, Alexander},
	author={Markowich, Peter},
	title={Global solutions to the coupled chemotaxis-fluid equations},
	journal={Comm.~Partial Differential Equations},
	volume={35},
	date={2010},
	number={9},
	pages={1635--1673},
}

\bib{galdi}{book}{
	author={Galdi, G.~P.},
	title={An Introduction to the Mathematical Theory of the Navier-Stokes
		Equations},
	series={Springer Monographs in Mathematics},
	edition={2},
	publisher={Springer, New York},
	date={2011},
}

\bib{gigamiyakawa}{article}{
	author={Giga, Yoshikazu},
	author={Miyakawa, Tetsuro},
	title={Solutions in $L_r$ of the Navier-Stokes initial value problem},
	journal={Arch.~Rational Mech.~Anal.},
	volume={89},
	date={1985},
	number={3},
	pages={267--281},
}

\bib{grisvard}{book}{
	author={Grisvard, P.},
	title={Elliptic Problems in Nonsmooth Domains},
	series={Monographs and Studies in Mathematics},
	volume={24},
	publisher={Pitman (Advanced Publishing Program), Boston, MA},
	date={1985},
}

\bib{Heihoff23}{article}{
	author={Heihoff, Frederic},
	title={Two new functional inequalities and their application to the
		eventual smoothness of solutions to a chemotaxis-Navier-Stokes system
		with rotational flux},
	journal={SIAM J.~Math.~Anal.},
	volume={55},
	date={2023},
	number={6},
	pages={7113--7154},
}

\bib{hytonen}{book}{
	author={Hyt\"onen, Tuomas},
	author={van Neerven, Jan},
	author={Veraar, Mark},
	author={Weis, Lutz},
	title={Analysis in Banach Spaces. Vol.~III. Harmonic Analysis and
		Spectral Theory},
	series={Ergebnisse der Mathematik und ihrer Grenzgebiete. 3. Folge. A
		Series of Modern Surveys in Mathematics [Results in Mathematics and
		Related Areas. 3rd Series. A Series of Modern Surveys in Mathematics]},
	volume={76},
	publisher={Springer, Cham},
	date={2023},
}

\bib{JWZ15}{article}{
	author={Jiang, Jie},
	author={Wu, Hao},
	author={Zheng, Songmu},
	title={Global existence and asymptotic behavior of solutions to a
		chemotaxis-fluid system on general bounded domains},
	journal={Asymptot.~Anal.},
	volume={92},
	date={2015},
	number={3-4},
	pages={249--258},
}

\bib{kangleewinkler}{article}{
	author={Kang, Kyungkeun},
	author={Lee, Jihoon},
	author={Winkler, Michael},
	title={Global weak solutions to a chemotaxis-Navier-Stokes system in
		$\Bbb{R}^3$},
	journal={Discrete Contin.~Dyn.~Syst.},
	volume={42},
	date={2022},
	number={11},
	pages={5201--5222},
}

\bib{KeZheng19}{article}{
	author={Ke, Yuanyuan},
	author={Zheng, Jiashan},
	title={An optimal result for global existence in a three-dimensional
		Keller-Segel-Navier-Stokes system involving tensor-valued sensitivity
		with saturation},
	journal={Calc.~Var.~Partial Differential Equations},
	volume={58},
	date={2019},
	number={3},
	pages={Paper No.~109, 27},
}

\bib{KMS16}{article}{
	author={Kozono, Hideo},
	author={Miura, Masanari},
	author={Sugiyama, Yoshie},
	title={Existence and uniqueness theorem on mild solutions to the
		Keller-Segel system coupled with the Navier-Stokes fluid},
	journal={J.~Funct.~Anal.},
	volume={270},
	date={2016},
	number={5},
	pages={1663--1683},
}

\bib{kozonoogawataniuchi}{article}{
	author={Kozono, Hideo},
	author={Ogawa, Takayoshi},
	author={Taniuchi, Yasushi},
	title={Navier-Stokes equations in the Besov space near $L^\infty$ and
		BMO},
	journal={Kyushu J.~Math.},
	volume={57},
	date={2003},
	number={2},
	pages={303--324},
}

\bib{lindemulder}{article}{
	author={Lindemulder, Nick},
	title={Maximal regularity with weights for parabolic problems with
		inhomogeneous boundary conditions},
	journal={J.~Evol.~Equ.},
	volume={20},
	date={2020},
	number={1},
	pages={59--108},
}

\bib{liulorz}{article}{
	author={Liu, Jian-Guo},
	author={Lorz, Alexander},
	title={A coupled chemotaxis-fluid model: global existence},
	journal={Ann.~Inst.~H.~Poincar\'e{} C Anal.~Non Lin\'eaire},
	volume={28},
	date={2011},
	number={5},
	pages={643--652},
}

\bib{LL1}{article}{
	author={Liu, Ling},
	title={A note on the global existence and boundedness of an
              {$N$}-dimensional parabolic-elliptic predator-prey system with
              indirect pursuit-evasion interaction},
	journal={Open Math.},
	volume={23},
	date={2025},
	number={1},
	pages={Paper No.~20240122, 11},
}

\bib{LL2}{article}{
	author={Liu, Ling},
	title={Boundedness and global existence in a higher-dimensional
              parabolic-elliptic-{ODE} chemotaxis-haptotaxis model with
              remodeling of non-diffusible attractant},
	journal={J.~Math.~Anal.~Appl.},
	volume={549},
	date={2025},
	number={1},
	pages={Paper No.~129473, 16},
}

\bib{LL3}{article}{
	author={Liu, Ling},
	title={Global well-posedness to a multidimensional
              parabolic-elliptic-elliptic attraction-repulsion chemotaxis
              system},
	journal={Electron.~J.~Differential Equations},
	date={2025},
	pages={Paper No.~26, 20},
}

\bib{lunardi}{book}{
	author={Lunardi, Alessandra},
	title={Interpolation Theory},
	series={Appunti. Scuola Normale Superiore di Pisa (Nuova Serie). [Lecture
		Notes. Scuola Normale Superiore di Pisa (New Series)]},
	edition={2},
	publisher={Edizioni della Normale, Pisa},
	date={2009},
}

\bib{muramatu}{article}{
	author={Muramatu, Tosinobu},
	title={On imbedding theorems for Besov spaces of functions defined in
		general regions},
	journal={Publ.~Res.~Inst.~Math.~Sci.},
	volume={7},
	date={1971/72},
	pages={261--285},
}

\bib{prusssimonett}{book}{
	author={Pr\"uss, Jan},
	author={Simonett, Gieri},
	title={Moving Interfaces and Quasilinear Parabolic Evolution Equations},
	series={Monographs in Mathematics},
	volume={105},
	publisher={Birkh\"auser/Springer, [Cham]},
	date={2016},
}

\bib{quittner}{book}{
	author={Quittner, Pavol},
	author={Souplet, Philippe},
	title={Superlinear Parabolic Problems},
	series={Birkh\"auser Advanced Texts: Basler Lehrb\"ucher. [Birkh\"auser
		Advanced Texts: Basel Textbooks]},
	publisher={Birkh\"auser Verlag, Basel},
	date={2007},
}

\bib{shibata}{article}{
	author={Shibata, Yoshihiro},
	title={On some free boundary problem of the Navier-Stokes equations in
		the maximal $L_p$-$L_q$ regularity class},
	journal={J.~Differential Equations},
	volume={258},
	date={2015},
	number={12},
	pages={4127--4155},
}

\bib{takeuchi}{article}{
	author={Takeuchi, Taiki},
	title={Asymptotic behavior of global mild solutions to the
		{K}eller-{S}egel-{N}avier-{S}tokes system in {L}orentz
		spaces},
	journal={Adv.~Nonlinear Anal.},
	volume={14},
	date={2025},
	number={1},
	pages={Paper No.~20250080, 29},
}

\bib{triebel}{book}{
	author={Triebel, Hans},
	title={Interpolation Theory, Function spaces, Differential Operators},
	series={North-Holland Mathematical Library},
	volume={18},
	publisher={North-Holland Publishing Co., Amsterdam-New York},
	date={1978},
	pages={528},
}

\bib{Wang17}{article}{
	author={Wang, Yulan},
	title={Global weak solutions in a three-dimensional
		Keller-Segel-Navier-Stokes system with subcritical sensitivity},
	journal={Math.~Models Methods Appl.~Sci.},
	volume={27},
	date={2017},
	number={14},
	pages={2745--2780},
}

\bib{WangWinklerXiang18}{article}{
	author={Wang, Yulan},
	author={Winkler, Michael},
	author={Xiang, Zhaoyin},
	title={Global classical solutions in a two-dimensional
		chemotaxis-Navier-Stokes system with subcritical sensitivity},
	journal={Ann.~Sc.~Norm.~Super.~Pisa Cl.~Sci.~(5)},
	volume={18},
	date={2018},
	number={2},
	pages={421--466},
}

\bib{watanabe}{article}{
	author={Watanabe, Keiichi},
	title={Stabilization of the chemotaxis-Navier-Stokes equations: maximal
		regularity approach},
	journal={J.~Math.~Anal.~Appl.},
	volume={504},
	date={2021},
	number={2},
	pages={Paper No.~125422, 12},
}

\bib{Winkler10}{article}{
	author={Winkler, Michael},
	title={Aggregation vs. global diffusive behavior in the
		higher-dimensional Keller-Segel model},
	journal={J.~Differential Equations},
	volume={248},
	date={2010},
	number={12},
	pages={2889--2905},
}

\bib{winkler2012}{article}{
	author={Winkler, Michael},
	title={Global large-data solutions in a chemotaxis-(Navier-)Stokes system
		modeling cellular swimming in fluid drops},
	journal={Comm.~Partial Differential Equations},
	volume={37},
	date={2012},
	number={2},
	pages={319--351},
}

\bib{Winkler15}{article}{
	author={Winkler, Michael},
	title={Boundedness and large time behavior in a three-dimensional
		chemotaxis-Stokes system with nonlinear diffusion and general
		sensitivity},
	journal={Calc.~Var.~Partial Differential Equations},
	volume={54},
	date={2015},
	number={4},
	pages={3789--3828},
}

\bib{Winkler16}{article}{
	author={Winkler, Michael},
	title={Global weak solutions in a three-dimensional
		chemotaxis--Navier-Stokes system},
	journal={Ann.~Inst.~H.~Poincar\'e{} C Anal.~Non Lin\'eaire},
	volume={33},
	date={2016},
	number={5},
	pages={1329--1352},
}

\bib{Winkler18}{article}{
	author={Winkler, Michael},
	title={Global mass-preserving solutions in a two-dimensional
		chemotaxis-Stokes system with rotational flux components},
	journal={J.~Evol.~Equ.},
	volume={18},
	date={2018},
	number={3},
	pages={1267--1289},
}

\bib{Winkler19}{article}{
	author={Winkler, Michael},
	title={A three-dimensional Keller-Segel-Navier-Stokes system with
		logistic source: global weak solutions and asymptotic stabilization},
	journal={J.~Funct.~Anal.},
	volume={276},
	date={2019},
	number={5},
	pages={1339--1401},
}

\bib{winkler2020}{article}{
	author={Winkler, Michael},
	title={Small-mass solutions in the two-dimensional Keller-Segel system
		coupled to the Navier-Stokes equations},
	journal={SIAM J.~Math.~Anal.},
	volume={52},
	date={2020},
	number={2},
	pages={2041--2080},
}

\bib{Winkler21}{article}{
	author={Winkler, Michael},
	title={Can rotational fluxes impede the tendency toward spatial
		homogeneity in nutrient Taxis(-Stokes) systems?},
	journal={Int.~Math.~Res.~Not.~IMRN},
	date={2021},
	number={11},
	pages={8106--8152},
}

\bib{xue}{article}{
	author={Xue, Chuan},
	title={Macroscopic equations for bacterial chemotaxis: integration of
		detailed biochemistry of cell signaling},
	journal={J.~Math.~Biol.},
	volume={70},
	date={2015},
	number={1-2},
	pages={1--44},
}

\bib{xueothmer}{article}{
	author={Xue, Chuan},
	author={Othmer, Hans G.},
	title={Multiscale models of taxis-driven patterning in bacterial
		populations},
	journal={SIAM J.~Appl.~Math.},
	volume={70},
	date={2009},
	number={1},
	pages={133--167},
}

\bib{YuWangZheng18}{article}{
	author={Yu, Hao},
	author={Wang, Wei},
	author={Zheng, Sining},
	title={Global classical solutions to the Keller-Segel-Navier-Stokes
		system with matrix-valued sensitivity},
	journal={J.~Math.~Anal.~Appl.},
	volume={461},
	date={2018},
	number={2},
	pages={1748--1770},
}

\bib{Z21}{article}{
	author={Zheng, Jiashan},
	title={A new result for the global existence (and boundedness) and
		regularity of a three-dimensional Keller-Segel-Navier-Stokes system
		modeling coral fertilization},
	journal={J.~Differential Equations},
	volume={272},
	date={2021},
	pages={164--202},
}

\bib{Zheng22}{article}{
	author={Zheng, Jiashan},
	title={Eventual smoothness and stabilization in a three-dimensional
		Keller-Segel-Navier-Stokes system with rotational flux},
	journal={Calc.~Var.~Partial Differential Equations},
	volume={61},
	date={2022},
	number={2},
	pages={Paper No.~52, 34},
}

\bib{ZK22}{article}{
	author={Zheng, Jiashan},
	author={Ke, Yuanyuan},
	title={Eventual smoothness and stabilization in a three-dimensional
		Keller-Segel-Navier-Stokes system modeling coral fertilization},
	journal={J.~Differential Equations},
	volume={328},
	date={2022},
	pages={228--260},
}
\end{biblist}
\end{bibdiv}

\end{document}